\newtheorem{theorem}{Theorem}[section]
\newtheorem{lemma}[theorem]{Lemma}
\newtheorem{proposition}[theorem]{Proposition}
\newtheorem{corollary}[theorem]{Corollary}
\newtheorem{remark}[theorem]{Remark}
\newcounter{as}[section]
\newtheorem{asser}[as]{Assertion}
\newcommand{\mf}[1]{{\mathfrak #1}}
\newcommand{\mb}[1]{{\mathbf #1}}
\newcommand{\bb}[1]{{\mathbb #1}}
\newcommand{\bs}[1]{{\boldsymbol #1}}
\newcommand{\ms}[1]{{\mathscr #1}}
\newcommand{\<}{\langle}
\renewcommand{\>}{\rangle}
\renewcommand{\Cap}{{\rm cap}}
\begin{document}

\title[Metastability of diffusion processes on the one-dimensional
torus] {Metastability of one-dimensional, non-reversible diffusions
  with periodic boundary conditions.}

\begin{abstract}
  We consider small perturbations of a dynamical system on the
  one-dimensional torus. We derive sharp estimates for the pre-factor
  of the stationary state, we examine the asymptotic behavior of the
  solutions of the Hamilton-Jacobi equation for the pre-factor, we
  compute the capacities between disjoint sets, and we prove the
  metastable behavior of the process among the deepest wells following
  the martingale approach. We also present a bound for the probability
  that a Markov process hits a set before some fixed time in terms of
  the capacity of an enlarged process.
\end{abstract}

\author{C. Landim, I. Seo}

\address{\noindent IMPA, Estrada Dona Castorina 110, CEP 22460 Rio de
  Janeiro, Brasil and CNRS UMR 6085, Universit\'e de Rouen, France.
  \newline e-mail: \rm \texttt{landim@impa.br} }

\address{\noindent Department of Mathematical Sciences, Seoul National University, Seoul, Korea. 
 \newline
  e-mail: \rm \texttt{insuk.seo@snu.ac.kr} }

\keywords{Non-reversible diffusions, Potential theory, Metastability,
  Dirichlet principle, Thomson principle, Eyrink-Kramers formula}

\maketitle

\section{Introduction}
\label{sec-1}

Variational formulae for the capacity between two sets have been
derived recently in the context of continuous time Markov chains and
diffusions \cite{gl,slo,lms}. These formulae were used to prove the
metastable behavior of asymmetric condensing zero-range processes
\cite{l2}, random walks in a potential field \cite{ls1}, mean field
Potts model \cite{ls2}, and to derive the Eyring-Kramers formula for
the transition time of non-reversible diffusions \cite{lms}.

To estimate the capacity through the variational formulae alluded to
above, one needs to know explicitly the stationary state. This
property is shared by all the dynamics mentioned in the previous
paragraph, where the invariant measures are the equilibrium states of
the reversible version of the dynamics. Usually, however, the
stationary states of non-reversible Markovian dynamics are not known
explicitly.

It is possible, nonetheless, to derive through dynamical large
deviations methods a formula for the quasi-potential of non-reversible
dynamics and estimates for the stationary state with exponentially
small errors \cite{fw}.  A natural development of this approach
consists in using potential theory to get sharper bounds of the
stationary state, that is, to provide precise estimates for the
first-order term in the expansion of the quasi-potential, the
so-called pre-factor.

For one-dimensional diffusion processes with periodic boundary
conditions,
\begin{equation}
\label{03}
dX_\epsilon (t) \;=\; b(X_\epsilon (t))\, dt \;+\; \sqrt{2\epsilon} \,
dW_t\;,
\end{equation}
where $b: \bb T \to \bb R$ is a smooth drift, $\epsilon>0$ a small
parameter and $W_t$ the Brownian motion on the one-dimensional torus
$\bb T=[0,1)$, one may derive sharp estimates for the pre-factor due
to an explicit formula for the stationary state obtained by Faggionato
and Gabrielli \cite{fg1}. This estimate and the precise bounds for the
capacity between two wells constitute the first main result of the
article. 

The pre-factor of the stationary state solves a Hamilton-Jacobi
equation.  We take advantage of the explicit formulae to examine the
asymptotic behavior of the solution of the Hamilton-Jacobi equation in
the hope that these results might give some insight on the behavior of
the pre-factor in higher dimensions.

The second main result of this article provides an extension to
diffusion processes of the martingale approach proposed in
\cite{bl2,bl7, bl9} to derive the metastable behavior of Markov
chains. The main difficulty in applying this method to diffusions lies
in the fact that the martingale approach requires an analysis of the
trace of the process on the wells. While for Markov chains the trace
process is still a Markov chain [with long range jumps], for
diffusions the trace becomes a singular diffusion with jumps along the
boundary of the wells, a dynamics very different from the original one
and difficult to analyze. 

We present in this article an entirely new approach inspired by
results in partial differential equations obtained by Evans, Tabrizian
and Seo, Tabrizian \cite{et, st}. Here is the idea. Denote by
$\ms E_i$, $1\le i\le n$, the wells, and let $G$ be a function defined
on the entire space and which is constant [with possibly different
values] in each well. Denote by $L_\epsilon$ the generator and by
$F_\epsilon$ the solution of the Poisson equation
$L_\epsilon F_\epsilon = G$. Assume that for all such functions $G$
the solution $F_\epsilon$ is uniformly bounded and is asymptotically
constant in each well.  We prove in Section \ref{sec3} that the
convergence in law of the projection of the trace process on the wells
follows from the previous property of the solutions of the Poisson
equation.

This new way of deriving the metastable behavior of a Markov chain is
applied here to small perturbations of the dynamical system
\eqref{03}. It also provides the first example where the reduced
chain, which describes the asymptotic dynamics among the wells, is a
irreducible, non-reversible Markov chain.

The third main result of the article consists in a bound on the
probability that the hitting time of a set is less than or equal to a
constant in terms of capacities. In view of the variational formulae
for the capacity, this result provides a general method to obtain
upper bounds for the probability of an event which appears 
in many different contexts.

We conclude this introduction with some historical remarks and a
description of the article.  The convergence of the order parameter,
in the sense of finite-dimensional distributions, of small
perturbations by reversible Gaussian noises of dynamical systems has
been proved by Sugiura in \cite{su}. Imkeller and Pavlyukevich
obtained a similar result in one-dimension when the Brownian motion is
replaced by a L\'evy process.  More recently, Bouchet and Reygner
\cite{br} derived a formula for the transition time between two wells
for non-reversible diffusions. A rigorous proof of this result is
still an open problem.

The paper is organized as follows. In Section \ref{sec01}, we
introduce the model and the main assumptions on the drift $b$. In
Section \ref{sec02}, we present the main results of the article in the
case of two wells.  In Section \ref{sec03}, we introduce the notion of
valleys and landscapes used throughout the article. In Sections
\ref{sec0} and \ref{sec1}, we derive sharp asymptotic estimates for
the pre-factor and for the capacity between two wells. In Section
\ref{sec3}, we prove the metastable behavior of the process by showing
that the projection of the trace process on the wells converges in an
appropriate time scale to a finite-state Markov chain. In Section
\ref{sec7}, we prove a bound on the probability that a certain set is
attained before a fixed time in terms of the capacities of an enlarged
process. Finally, in Section \ref{sec5}, we prove that the solutions
of certain Poisson equations are asymptotically constant on the wells.

\section{The model} 
\label{sec01}

We introduce in this section the model, the main assumptions and 
known results.

\smallskip\noindent{\bf \ref{sec01}.1. The diffusion process.} Let
$\bb T = [0,1)$ be the one-dimensional torus of length $1$.  Consider
a continuous vector field $b: \bb T \to \bb R$. Throughout this
article, we assume that
$b$ fulfills the following conditions: \\
\smallskip\noindent ({\bf H1}) The closed set $\{\theta\in\bb T:
b(\theta) = 0\}$ has a finite number of connected components, denoted
by $I_j = [l_j,r_j]$, $1\le j\le p$, for $0<l_1\le r_1< l_2 \le \cdots
<l_p\le r_p <1$. Some of these intervals may be
degenerate, as we do not exclude the possibility that $l_j=r_j$. \\
\smallskip\noindent({\bf H2}) $b$ is of class $C^2$ in the set
$\bb T \setminus I$, where $I=\cup_{1\le j\le p} \, I_j$. \\
\smallskip\noindent({\bf H3}) If $(c,d)$ is a connected component of
$\bb T \setminus I$, then $b'(c+)\not = 0$ and $b'(d-)\not = 0$. When
the interval $[l_j,r_j]$ is degenerate, $l_j=r_j$, the left and right
derivatives of $b$ at $r_j$ may be different: it may happen that
$b'(r_j-)\not = b'(r_j+)$. However, both right and left derivatives do
not vanish.

The generator of the diffusion \eqref{03}, denoted by $L_\epsilon$, is
given by
\begin{equation*}
(L_\epsilon f)(\theta)\;=\; b(\theta) f'(\theta) \;+\; \epsilon \, f''(\theta)\;.
\end{equation*}
If the average drift vanishes,
\begin{equation*}
\int_{\bb T} b(\theta)\, d\theta \;=\; 0\;,
\end{equation*}
there exists a potential $U:\bb T\to \bb R$ such that $b(\theta) = -\,
U'(\theta)$.  In this case the stationary measure is given by
$Z^{-1}_\epsilon \exp\{- U(\theta)/\epsilon\} \, d\theta$ for a suitable
normalization factor $Z_\epsilon^{-1}$ and the process is reversible
with respect to this measure.

Assume, from now on, that
\begin{equation}
\label{04}
B\;:=\; \int_{\bb T} b(\theta)\, d\theta \;>\; 0\;,
\end{equation}
so that the process $X_\epsilon(t)$ is non-reversible. In \cite{fg1},
the stationary measure of this process has been explicitly computed.
Regard $b$ as an $1$-periodic function on $\bb R$.  Let $S:\bb R\to
\bb R$ be the function given by
\begin{equation}
\label{34}
S(x) \;=\; - \int_0^x b(z)\, dz\;,
\end{equation}
and let $\pi_\epsilon$, $m_\epsilon : \bb R \to \bb R_+$ be given by
\begin{equation}
\label{79}
\pi_\epsilon(x) \;=\; \int_x^{x+1} e^{[S(y)-S(x)]/\epsilon}\, dy
\;, \quad 
m_\epsilon(x) \;=\; \frac 1{c(\epsilon)} \, \pi_\epsilon(x)\;, 
\end{equation}
where $c(\epsilon)$ is the normalizing constant which turns
$m_\epsilon$ the density of a probability measure on $\bb T$. Indeed,
$\pi_\epsilon$, $m_\epsilon$ are $1$-periodic and can be considered as
defined on $\bb T$.  By \cite{fg1}, the measure $\mu_\epsilon
(d\theta) = m_\epsilon(\theta) \, d\theta$ on $\bb T$ is the
stationary state of the diffusion \eqref{03}.

\smallskip\noindent{\bf \ref{sec01}.2. The quasi-potential.} 
Let $z: \bb R \to \bb R$ be the function which indicates the position
of the farthest maxima of $S$ to the right: $z(x)=z_x$ is the largest
point in $[x,\infty)$ at which a maximum of the set $\{ S(y) : y\ge x
\}$ is attained. More precisely, \\
\smallskip 
\noindent (a) $\displaystyle S(z_x) \;=\; \max\{S(y) : y\ge x\}$, \\
\noindent (b) If $y\ge x$ and $S(y) = S(z_x)$, then $y\le z_x$.
\smallskip

Note that, for all $x\in\bb R$, $z(x)$ not only exists but also
satisfies $z(x) \in [x,x+1)$ because $S(y+1) = S(y) - B < S(y)$.
Moreover, $z_x$ is a local maximum of $S$ if $z_x \not = x$. In this
case, $b(z_x) = - S'(z_x)=0$.

Let $\widehat V:\bb R \to \bb R$ be given by
\begin{equation*}
\widehat V(x) \;=\; S(x) \;-\; S(z_x)\;.
\end{equation*}
Since $S(y+k) = S(y) - kB$ for $y\in\bb R$, $k\in \bb Z$, $z(x+k) =
z(x)+k$. In particular, $\widehat V(x+1) = \widehat V(x)$, so that
$\widehat V$ is a $1$-periodic function and can be considered as
defined on the torus $\bb T$. By \cite[Proposition 2.1]{fg1},
$\widehat V$ is the quasi-potential associated to the diffusion
\eqref{03}, and by \cite[Theorem 2.4]{fg1} it is a viscosity solution
of the Hamilton-Jacobi equation associated to the Hamiltonian $H(x,p)
= p[p-b(x)]$.

\section{Main result: Two stable points}
\label{sec02}

We present in this section the main results of the article in the
case, illustrated in Figure \ref{if01}, where the drift $b$ is smooth
and the dynamical system $dX(t)=b(X(t))\, dt$ exhibits two stable
equilibria and two unstable ones.  

In addition to the conditions ({\bf H1}-{\bf{H3})}, we shall assume
in this section that

\smallskip\noindent ({\bf H0}) The drift $b$ is smooth and the set
$\{\theta\in\bb T: b(\theta) = 0\}$ consists of four points.

\smallskip 

Condition ({\bf H0}) is not needed in the proofs of the results
presented in this article. It is assumed in this section because it
simplifies significantly the notation and the statement of the
results, helping the reader to access the content of the
article. Further assumptions will be formulated along the section.

\begin{figure}
  \protect
\includegraphics[scale=0.26]{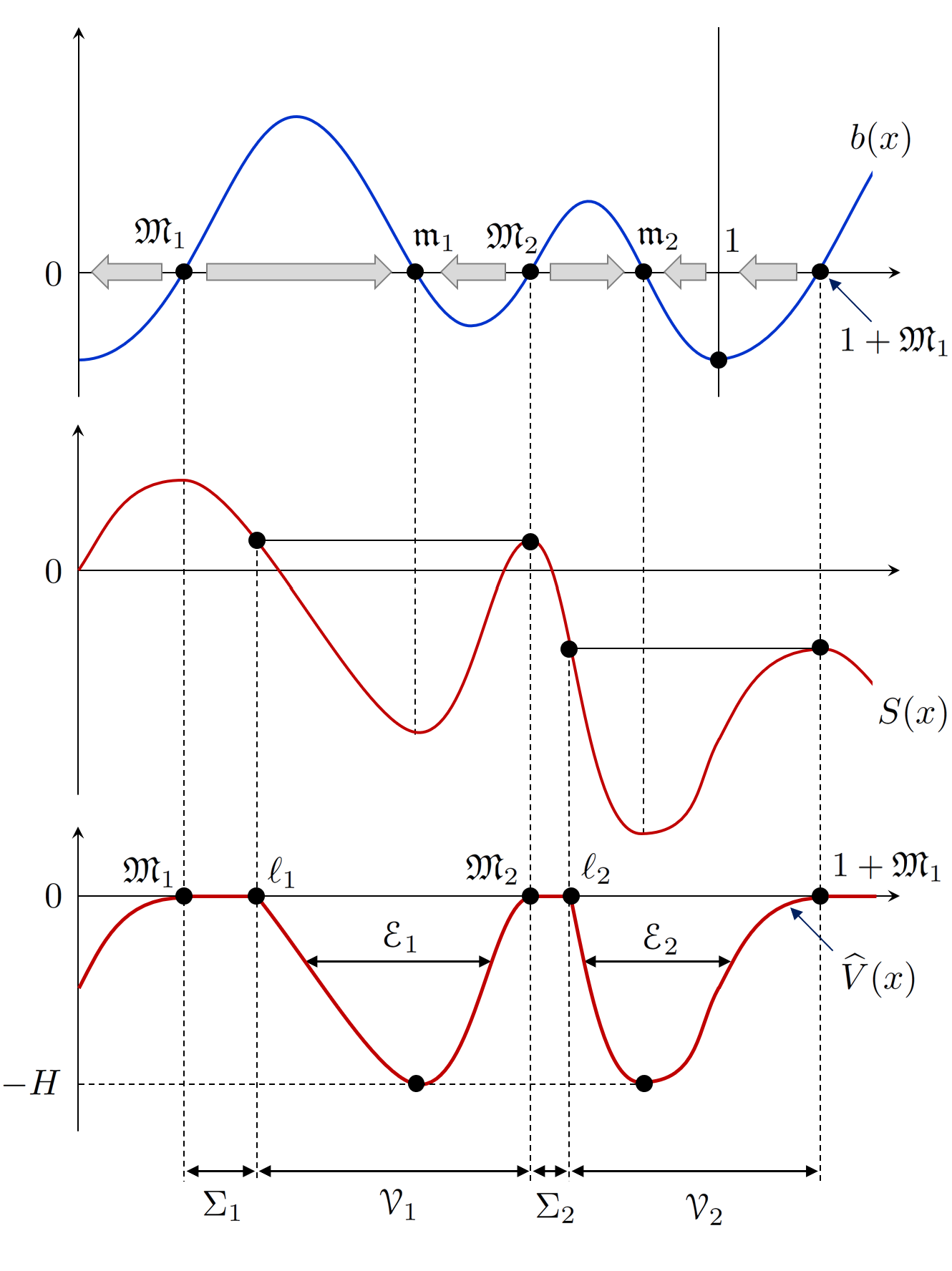}\protect
\caption{The graphs of $b$, $S$ and $\widehat{V}$. In the
  first graph, the gray arrows represent the direction of the drift in
  the dynamical system $dX(t)=b(X(t))\, dt$. Thus, $\mf{m}_1$ and
  $\mf{m}_2$ are stable equilibria, and $\mf{M}_1$ and $\mf{M}_2$ are
  unstable equilibria. The existence of two stable equilibria
  separated by unstable one implies a metastable behavior of the
  perturbed dynamics \eqref{03}.}
\label{if01}
\end{figure}

\smallskip\noindent{\bf \ref{sec02}.1. Notation.} By assumptions ({\bf
  H0}) and ({\bf H3}), $S(\cdot)$ has two local maxima
$\mf{M}_{1},\,\mf{M}_{2}$ and two local minima
$\mf{m}_{1},\,\mf{m}_{2}$. Without loss of generality, we assume that
$0<\mf{M}_{1}<\mf{m}_{1}<\mf{M}_{2}<\mf{m}_{2}<1$, and that that $S(\mf M_1) > S(\mf M_2)> S(\mf M_3)$, where we adopted the convention that $\mf{M}_{3}=1+\mf{M}_{1}$.  We refer to
Figure \ref{if01} for the graphs of $b(\cdot),\,S(\cdot)$ and
$\widehat{V}(\cdot)$.

For each $i=1,\,2$, let
$\ell_{i}=\inf\{x>\mf{M}_{i}: S(x)=S(\mf{M}_{i+1})\}$ and set
\begin{equation*}
\Sigma_{i}\;=\; (\mf{M}_{i},\,\ell_{i}) \;,\;\;
\ms{V}_{i}\;=\;(\ell_{i},\,\mf{M}_{i+1})\;, \;\;
i=1,\,2\;.
\end{equation*}
Note that $\ms{V}_{2}$ can be regarded as the subset of $\bb T$ given
by $(\ell_{i},\,1]\cup(0,\,\mf{M}_{1})$.  These notation will be
comprehensively extended to a general drift $b$ in Section
\ref{sec03}. Note that for $x\in\Sigma_{1}\cup\Sigma_{2}$, one has
that $z(x)=x$ and hence $\widehat{V}(x)=0$ (cf. Figure \ref{if01}).
Thus, the sets $\Sigma_{i}$, $i=1,\,2$, represent the \textit{saddle
  intervals} between two \textit{valleys} $\ms{V}_{1}$ and
$\ms{V}_{2}$. The notion of valley is extended to the one of
\textit{landscape} in Section \ref{sec03} to handle more general
situations. The depth of the valleys $\ms{V}_{1}$ and $\ms{V}_{2}$ are
$-\, \widehat{V}(\mf{m}_{1})$ and $-\, \widehat{V}(\mf{m}_{2})$,
respectively. Assume that
\begin{equation}
\label{ass01}
\widehat{V}(\mf{m}_{1})\;=\;\widehat{V}(\mf{m}_{2})\;=\;-H\;,
\end{equation}
so that the depth of the two valleys coincide. This assumption is not
necessary for the results below, but without it the results become
trivial and can easily be deduced from the argument.

For $i=1,\,2$, let
$\ms{E}_{i}=[e_{i}^{-},\,e_{i}^{+}]\subseteq\ms{V}_{i}$ be such that
$\widehat{V}(\mf{m}_{i})<\widehat{V}(e_{i}^{-})=\widehat{V}(e_{i}^{+})<0$
so that $\mf{m}_{i}\in\ms{E}_{i}$. The set $\ms{E}_{i}$ represents the
metastable well around the stable point $\mf{m}_{i}$.

\smallskip\noindent{\bf \ref{sec02}.2. Sharp asymptotics for the
  pre-factor. } The first main result of the article provides a sharp
estimate of the stationary state. Write $m_{\epsilon}$ as 
\begin{equation*}
m_{\epsilon}(\theta)\;=\;F_{\epsilon}(\theta)\, e^{-V(\theta)/\epsilon}\;,
\end{equation*}
where $V(\theta) = \widehat{V}(\theta) + H$.  The function
$F_{\epsilon}(\cdot)$ is called the pre-factor.  Its behavior as
$\epsilon \to 0$ plays a fundamental role in the estimation of the
capacity between two wells, which is one of the crucial steps in the
proof of the metastable behavior of a Markov chain. Such a result is
still open in the non reversible context except in the trivial case
where the pre-factor is constant. The first main result of this
article provides an expansion in $\epsilon$ of the pre-factor. For
$i=1$, $2$, let
\begin{equation*}
\sigma(\mf{m}_{i})\;=\;\sqrt{\frac{2\pi}{-b'(\mf{m}_{i})}}\;,
\quad
\omega (\mf{M}_{i})\;=\;\sqrt{\frac{2\pi}{b'(\mf{M}_{i})}}\;, 
\quad
Z \;=\; \sum_{j=1}^2 \sigma(\mf{m}_{j}) \, \omega (\mf{M}_{j+1}) \;.
\end{equation*} 

\begin{theorem}
\label{st01}
Under the assumptions ({\bf H0}-{\bf H3}),
\begin{enumerate}
\item (Pre-factor on valleys) for all $x\in\ms{V}_{i}$, $i=1,\,2$,
\begin{equation*}
F_{\epsilon}(x)\;=\; \big[1+o(1)\big]\,
\frac{\omega(\mf{M}_{i+1})}{Z\,\sqrt{\epsilon}}
\end{equation*}
where $o(1)\rightarrow0$ as $\epsilon\rightarrow0$ uniformly on
$\ms{V}_{1}\cup\ms{V}_{2}$.  \smallskip 
\item (Pre-factor on saddle intervals) for all
  $x\in\Sigma_{1}\cup\Sigma_{2}$,
\begin{equation*}
F_{\epsilon}(x)\;=\; \big[1+o(1)\big]\,
\frac{1}{Z\,b(x)}\;.
\end{equation*}
where $o(1)\rightarrow0$ as $\epsilon\rightarrow0$ for all
$x\in\Sigma_{1}\cup\Sigma_{2}$.
\end{enumerate}
\end{theorem}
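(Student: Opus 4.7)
The plan is to exploit the explicit Faggionato--Gabrielli formula for the invariant density together with Laplace's method applied both to the integral defining $\pi_\epsilon$ and to the normalizing constant $c(\epsilon)$. Starting from $m_\epsilon(\theta)=\pi_\epsilon(\theta)/c(\epsilon)$ and the identity $\widehat V(\theta)=S(\theta)-S(z_\theta)$, the pre-factor may be rewritten as
\begin{equation*}
F_\epsilon(\theta) \;=\; \frac{e^{H/\epsilon}}{c(\epsilon)}\, e^{-S(z_\theta)/\epsilon}\int_\theta^{\theta+1} e^{S(y)/\epsilon}\,dy\;,
\end{equation*}
so every constant appearing in the two claimed limits will arise from matching an interior or boundary Laplace contribution of the inner integral with the analogous contribution in $c(\epsilon)$.

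For the inner integral I would argue as follows. When $\theta\in\ms V_i$, the definition of $z_\theta$ together with the strict ordering $S(\mf M_1)>S(\mf M_2)>S(\mf M_3)$ guarantees that $\mf M_{i+1}$ is the unique global maximum of $S$ on $[\theta,\theta+1]$ and lies in its interior; the remaining local maximum at $\mf M_{i+2}$ contributes a factor of order $\exp\{[S(\mf M_{i+2})-S(\mf M_{i+1})]/\epsilon\}$, which is exponentially small. An interior Laplace approximation therefore yields
\begin{equation*}
\int_\theta^{\theta+1} e^{S(y)/\epsilon}\,dy \;=\; [1+o(1)]\,\omega(\mf M_{i+1})\sqrt{\epsilon}\,e^{S(\mf M_{i+1})/\epsilon}\;.
\end{equation*}
When $\theta\in\Sigma_i$, the definition of $\ell_i$ forces $\Sigma_i\subset(\mf M_i,\mf m_i)$, so $S$ is strictly decreasing on $\Sigma_i$, $z_\theta=\theta$, and the maximum of $S$ on $[\theta,\theta+1]$ is attained only at the left endpoint. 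An endpoint Laplace argument, using $S'(\theta)=-b(\theta)$ with $b$ bounded below on each compact subset of $\Sigma_i$, then gives
\begin{equation*}
\int_\theta^{\theta+1} e^{S(y)/\epsilon}\,dy \;=\; [1+o(1)]\,\frac{\epsilon}{b(\theta)}\,e^{S(\theta)/\epsilon}\;.
\end{equation*}

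Next I would evaluate $c(\epsilon)=\int_0^1\pi_\epsilon(x)\,dx$. Inserting the asymptotic just found for $\pi_\epsilon$ on the valleys and applying Laplace's method a second time in the $x$ variable, around each stable point $\mf m_j$ where $\widehat V$ attains its minimum $-H$, produces two Gaussian contributions of size $\omega(\mf M_{j+1})\,\sigma(\mf m_j)\,\epsilon\, e^{H/\epsilon}$. The contribution of $\Sigma_1\cup\Sigma_2$, where $\widehat V\equiv 0$, is at most of order $\epsilon$ and hence exponentially smaller, so
\begin{equation*}
c(\epsilon) \;=\; [1+o(1)]\,\epsilon\,e^{H/\epsilon}\sum_{j=1}^2 \omega(\mf M_{j+1})\,\sigma(\mf m_j) \;=\; [1+o(1)]\,Z\,\epsilon\,e^{H/\epsilon}\;.
\end{equation*}
Substituting the two cases of the inner integral and this asymptotic of $c(\epsilon)$ into the displayed formula for $F_\epsilon(\theta)$ and simplifying the exponential factors delivers both statements of the theorem.

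The principal technical obstacle is the uniformity on $\ms V_1\cup\ms V_2$ asserted in part (1), which becomes delicate when $\theta$ is allowed to approach the boundary of $\ms V_i$: near $\ell_i$ the secondary local maximum $\mf M_{i+2}$ becomes competitive in the Laplace argument, while near $\mf M_{i+1}$ the global maximum approaches the left endpoint of $[\theta,\theta+1]$ and the interior Laplace formula degenerates into a half-Gaussian one, introducing an extra factor $1/2$ in the limit. Both phenomena have to be controlled by explicit remainder estimates, exploiting the strict inequalities $S(\mf M_1)>S(\mf M_2)>S(\mf M_3)$ in the first case, and a uniform lower bound on $-S''$ in a fixed neighborhood of $\mf M_{i+1}$ together with a quantitative analysis of truncated Gaussian integrals in the second.
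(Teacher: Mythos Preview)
Your approach is correct and is essentially the paper's own: the paper derives Theorem~\ref{st01} as the special case (under~(\textbf{H0})) of its Propositions~\ref{l01} and~\ref{l02}, which amount precisely to a Laplace expansion of the inner integral $\int_\theta^{\theta+1}e^{S(y)/\epsilon}\,dy$ (interior maximum on landscapes, boundary maximum on saddle intervals) together with a second Laplace expansion of $c(\epsilon)$ around the global minima $\mf m_j$, exactly as you outline. One small correction to your discussion of the uniformity obstacle: near $\ell_i$ the secondary maximum $\mf M_{i+2}$ never becomes competitive, since the gap $S(\mf M_{i+1})-S(\mf M_{i+2})>0$ is fixed and independent of $\theta$; the only delicacy at that end is the contribution of the left endpoint $\theta$ itself (where $S(\theta)\to S(\mf M_{i+1})$), which is $O(\epsilon)$ and hence negligible against the $O(\sqrt\epsilon)$ interior term.
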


The general case, without assumption({\bf H0}), is presented in
Propositions \ref{l01} and \ref{l02}.  Note the difference in the
scaling factor in parts (1) and (2). This difference is explained
along with a connection to the Hamilton-Jacobi equation for
$F_{\epsilon}$ in Subsection \ref{sec0}.5. The scaling difference of
the pre-factor indicates that its asymptotic analysis in higher
dimension may be a difficult problem.

\smallskip\noindent{\bf \ref{sec02}.3. Metastable behavior. } We turn
to the metastable behavior of the diffusion $X_{\epsilon}(t)$ between
the valleys $\ms{E}_{1}$ and $\ms{E}_{2}$. Let
$\widehat{X}_{\epsilon}(t):=X_{\epsilon}(e^{H/\epsilon}t)$ be the
speeded-up process. As in the approach developed in \cite{bl2, bl7},
we define the metastable behavior of the diffusion as the convergence
of the projection of the trace process.

To define the trace process of $\widehat{X}_{\epsilon}(t)$ on
$\ms{E}=\ms{E}_{1}\cup\ms{E}_{2}$, let
\begin{equation*}
T_{\ms{E}}(t)\;=\; \int_{0}^{t} \chi_{\ms{E}}
(\widehat{X}_{\epsilon}(s)) \, ds \;, \quad
S_{\ms{E}}(t) \;=\; \sup\{s\ge0:T_{\ms{E}}(s)\le t\}\;.
\end{equation*}
The process $Y_{\epsilon}(t)=\widehat{X}_{\epsilon}(S_{\ms{E}}(t))$ is
called the trace of the process $\widehat{X}_{\epsilon}$ on
$\ms{E}$. Informally, one obtains a trajectory of $Y_{\epsilon}(t)$
from $\widehat{X}_{\epsilon}(t)$ by deleting the excursion of
$\widehat{X}_{\epsilon}$ outside $\ms{E}$. In Subsection \ref{sec3}.4,
we show that $Y_{\epsilon}(\cdot)$ is a Markov process on $\ms{E}$
with respect to a suitable filtration. Let
$\Psi:\ms{E}\rightarrow\{1,\,2\}$ be the projection defined by
$\Psi(x) = \chi_{\ms{E}_{1}}(x) + 2\chi_{\ms{E}_{2}}(x)$.  Clearly,
$\bs{x}_{\epsilon}(t)=\Psi(Y_{\epsilon}(t))$ takes values in the set
$\{1,\,2\}$, and represents the valley visited by the process
$Y_{\epsilon}(t)$. Following \cite{bl2,bl7}, we shall say that the
process $X_\epsilon(t)$ is metastable in the time-scale
$e^{H/\epsilon}$ if $\bs{x}_{\epsilon}(\cdot)$ converges to a Markov
chain on $\{1,\,2\}$, and if the process $\widehat X_{\epsilon}(t)$
remains outside $\ms E$ for a negligible amount of time.

The method developed in \cite{bl2, bl7, bl9} provides a robust way to
establish these results. Moreover, it has been shown in \cite{llm}
that under some mild extra assumptions the metastability as stated
above entails the convergence of the finite-dimensional distributions
of the projections of $\widehat X_{\epsilon}(t)$.

This approach to metastability was successfully enforced in the
context of Markov chains. Its extension to diffusions, such as the one
considered in the current paper, faced a major difficulty due to the
singular behavior of the trace process $Y_{\epsilon}(t)$ at the
boundary of $\ms{E}$. In this paper, we propose a new way of
establishing the convergence of the projection of the trace process
based on results from the theory of partial differential equations
(cf. \cite{et, st}). This is the content of Sections \ref{sec3} and
\ref{sec5}. \smallskip

Let 
\begin{equation*}
R(1,\,2)\;=\;\frac{1}{\sigma(\mf{m}_{1})\, \omega(\mf{M}_{2})}
\;,\quad
R(2,\,1)\;=\;\frac{1}{\sigma(\mf{m}_{2})\, \omega(\mf{M}_{1})}\;,
\end{equation*}
and consider the Markov chain $\bs{X}(t)$ on $\{1,\,2\}$ with
generator given by
\begin{equation*}
(\bs{L}f)(i)\;=\;R(i,\,j)\left[F(j)-F(i)\right]\;,
\quad \{i,\,j\} \,=\,\{1,\,2\}\;.
\end{equation*}
Let $\bs{Q}_{j}$, $j=1,\,2$, be the law of the Markov chain
$\bs{X}(t)$ starting from $j$.

\begin{theorem}
\label{st02}
Fix $j\in\{1,\,2\}$, $\theta_{0}\in\ms{E}_{j}$ and suppose that
$X_{\epsilon}(0)=\theta_{0}$ for all $\epsilon>0$. Then, the law of
the process $Y_{\epsilon}(\cdot)$ converges to $\bs{Q}_{j}$ as
$\epsilon\to 0$.
\end{theorem}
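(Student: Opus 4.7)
The plan is to establish Theorem \ref{st02} through the martingale characterization of $\bs Q_j$, using the new PDE approach outlined in the introduction. Fix a test function $h:\{1,\,2\}\to\bb R$; the goal is to show that
\begin{equation*}
M^h_\epsilon(t) \;:=\; h(\bs x_\epsilon(t)) \,-\, h(\bs x_\epsilon(0)) \,-\, \int_0^t (\bs L h)(\bs x_\epsilon(s))\, ds
\end{equation*}
is asymptotically a martingale. Combined with tightness of $\{\bs x_\epsilon(\cdot)\}$ on the Skorohod space and well-posedness of the martingale problem on the finite state space $\{1,\,2\}$, this identifies the limit with $\bs Q_j$.

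The core step is to lift $h$ to the torus. Let $G:\bb T\to\bb R$ be any continuous function equal to $(\bs L h)(i)$ on $\ms E_i$ and solve the centered Poisson equation $L_\epsilon F_\epsilon = e^{-H/\epsilon}(G - c_\epsilon)$ with $c_\epsilon = \mu_\epsilon(G)$. An application of It\^o's formula to the speeded-up process $\widehat X_\epsilon$ then produces the true martingale
\begin{equation*}
N_\epsilon(t) \;=\; F_\epsilon(\widehat X_\epsilon(t)) \,-\, F_\epsilon(\widehat X_\epsilon(0)) \,-\, \int_0^t \big[\,G(\widehat X_\epsilon(s)) - c_\epsilon\,\big]\, ds\;.
\end{equation*}
The essential input, to be established in Section \ref{sec5}, is that $F_\epsilon$ is uniformly bounded and asymptotically constant on each well, with limiting values $f_i$ determined by the two-by-two linear system $R(i,j)[f_j-f_i] = (\bs L h)(i)$. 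Since $h$ solves the same system, one gets $f_i=h(i)$ up to a harmless additive constant fixed by the normalization $c_\epsilon=\mu_\epsilon(G)$. Furthermore, using Section \ref{sec7} one controls the excursion time $t - T_{\ms E}(t)$ and shows it is negligible, so that replacing $\widehat X_\epsilon$ by the trace $Y_\epsilon$ in $N_\epsilon$ does not affect the limit and matches the time change implicit in $\bs x_\epsilon(\cdot)$.

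The rates $R(i,j)$ themselves come from the capacity asymptotics of Sections \ref{sec0}--\ref{sec1}, which are in turn powered by Theorem \ref{st01}. Using the sharp prefactor, Laplace asymptotics give $\mu_\epsilon(\ms E_i) \sim \sigma(\mf m_i)\,\omega(\mf M_{i+1})/Z$, while the Dirichlet--Thomson variational formulas for the non-reversible capacity yield a matching expansion for $\Cap(\ms E_i,\ms E_j)$. The ratio $e^{H/\epsilon}\,\Cap(\ms E_i,\ms E_j)/\mu_\epsilon(\ms E_i)$ converges to $R(i,j)$, which is precisely the quantity that must appear for the limit of $N_\epsilon$ to be consistent with the prescribed generator $\bs L$ acting on $h$.

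The main obstacle is the asymptotic constancy of $F_\epsilon$ on each well, the heart of Section \ref{sec5}. In the limit $\epsilon\to 0$ the equation $L_\epsilon F_\epsilon = e^{-H/\epsilon}(G-c_\epsilon)$ degenerates and the non-reversible drift creates a singular circulation inside each valley; this is exactly the feature that obstructed a direct analysis of the trace process in earlier approaches. Following the strategy of Evans--Tabrizian and Seo--Tabrizian \cite{et,st}, I would combine a maximum principle giving a uniform $L^\infty$ bound in terms of $\|G\|_\infty$ with a localized energy estimate: testing the equation against $F_\epsilon - \bar f_i(\epsilon)$ on $\ms E_i$ and exploiting the strict inequality $\widehat V<0$ on $\ms E_i$ to produce coercivity, one bounds the Dirichlet form of $F_\epsilon$ on $\ms E_i$ by a vanishing quantity, forcing the oscillation of $F_\epsilon$ to tend to zero. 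The constants $\bar f_i(\epsilon)$ are then pinned down by testing the Poisson equation against the equilibrium potentials between wells and invoking the capacity asymptotics above, which couples the PDE and the capacity analysis into the final identification $\bar f_i(\epsilon)\to h(i)$.
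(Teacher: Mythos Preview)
Your overall architecture---tightness plus identification of limit points via the martingale problem, with the martingale produced from a solution of $\widehat L_\epsilon F_\epsilon = G - c_\epsilon$---is the scheme the paper uses (Lemmas \ref{l03} and \ref{l08}, resting on Proposition \ref{as14}). One design difference: the paper takes $g = \sum_i (\bs L h)(i)\,\chi_{\ms E_i}$, supported on $\ms E$, and corrects it by a multiple of $\chi_{\ms E_1}$ so that the centered right-hand side $\overline g_\epsilon$ still vanishes on $\ms E^c$. After stopping the martingale at $S_{\ms E}(t)$, the time integral then becomes \emph{exactly} $\int_0^t \overline g_\epsilon(Y_\epsilon(s))\,ds$ by a change of variables, with no remainder to estimate. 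Your continuous $G$ and constant correction $c_\epsilon$ also work, but only once you separately control the time spent in $\Delta$ via Lemma \ref{l06}. You also leave tightness as a black box; the paper proves it through Aldous' criterion, and the key input there is the hitting-time bound of Corollary \ref{c01}, which is precisely where the enlarged-process capacity estimate of Proposition \ref{p01} enters.

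The substantive divergence is in how you propose to establish the properties of $F_\epsilon$. The paper does \emph{not} use maximum principles or localized energy estimates in the style of \cite{et,st}: it writes down the explicit one-dimensional integral formula \eqref{73} for the solution and performs Laplace-type asymptotics directly, identifying the limiting constants on each well through the purely algebraic identity of Lemma \ref{la02} rather than by testing against equilibrium potentials or invoking capacity asymptotics. Your abstract route has a genuine gap. On the torus there is no boundary, so the maximum principle by itself does not yield an $L^\infty$ bound for the Poisson equation; one would need a spectral-gap or resolvent estimate for $\widehat L_\epsilon$, which is essentially the metastability statement you are trying to prove. Moreover, the localized energy arguments of \cite{et,st} rely on self-adjointness of the generator in $L^2(\mu_\epsilon)$; here $L_\epsilon$ is non-reversible, and when you localize to a well the integration by parts produces boundary terms involving the antisymmetric part that do not obviously vanish. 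You flag the non-reversibility as the obstacle but do not indicate how to overcome it. The paper sidesteps both issues by exploiting one-dimensionality through the explicit solution formula.
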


The general version of this result is presented in Theorem \ref{th01}.
Although, under {(\bf{H0})}, the process $\bs{X}(t)$ is reversible
with respect to its invariant distribution, this is no longer true in
the general setting. Actually, Theorem \ref{th01} provides the first
example of a dynamics whose asymptotic evolution is described by a
non-reversible and irreducible Markov chain.

The proof of Theorem \ref{st02} is divided in two parts. We have first
to establish the tightness of the process $\bs{x}_{\epsilon}(t)$
(cf. Section \ref{sec3}.5). The core step in the proof of this result
is an estimation of the escape time from a metastable well. For this
purpose we establish a general inequality, presented in Proposition
\ref{p01}, which bounds the hitting time of a set in terms of a
capacity which can be easily estimated through the variational
formulae for the capacity.  We believe that this inequality, the third
main result of the article, can be useful in numerous different
contexts.

The second part of the proof consists in the characterization of the
limit point.  This part is based on the analysis of the solution of a
certain Poisson equation (cf. Proposition \ref{as14}). This sort of
analysis has been carried out in \cite{et, st} for reversible
diffusions based on ideas from PDEs. 

\section{Valleys and landscapes}
\label{sec03}

\begin{figure}
  \centering
\begin{tikzpicture}
\draw (-5.4,3.4) [rounded corners = 5pt] -- (-5,5) 
[rounded corners = 5pt] -- (-4, 5) 
[rounded corners = 5pt] -- (-3.7,4)  -- (-3.3,4) 
[rounded corners = 5pt] -- (-3,3) -- (-2,3) 
[rounded corners = 5pt] -- (-1,.5) 
[rounded corners = 5pt] -- (-0.5,2.5) 
[rounded corners = 5pt] -- (0,1) 
[rounded corners = 5pt] -- (.7,3) 
[rounded corners = 5pt] -- (1.6,3)
[rounded corners = 5pt] -- (2.1,.8)
[rounded corners = 5pt] -- (2.6,3.1)
[rounded corners = 5pt] -- (3.5,.4)
[rounded corners = 5pt] -- (4.4,.4)
[rounded corners = 5pt] -- (4.85,2) 
[rounded corners = 5pt] -- (5.5,2); 
\draw (5,0) -- (5, 2);
\draw (2.96,0) -- (2.96, 2);
\draw (2.61,0) -- (2.61, 3);
\draw (1.45,0) -- (1.45, 3);
\draw (.85,0) -- (.85, 3);
\draw (-2.15,0) -- (-2.15, 3);
\draw (-2.85,0) -- (-2.85, 3);
\draw (-4.85,0) -- (-4.85, 5);
\draw[blue, very thick] (-2.15,0) -- (.85,0);
\draw[blue, very thick] (1.45,0) -- (2.61,0);
\draw[red, very thick] (2.61,0) -- (2.96,0);
\draw[blue, very thick] (2.96,0) -- (5,0);
\draw (4.5,4) node[anchor=east] {$S(x)$};
\draw (-3.9,5) node[anchor=west] {$\mf M^+_1 = \mf L^+_1$};
\fill[cyan] (-4.15,5) circle [radius = .05cm];
\draw (-.7,2.8) node[anchor=west] {$\mf M^+_2$};
\fill[cyan] (-.5,2.38) circle [radius = .05cm];
\draw (1.45,3.2) node[anchor=south] {$\mf M^+_3$};
\fill[cyan] (1.45,3) circle [radius = .05cm];
\fill[cyan] (2.61,3) circle [radius = .05cm];
\draw (2.61,3.2) node[anchor=west] {$\mf M^+_4=\mf L^+_2$};
\draw (5,2) node[anchor=south] {$\mf M^-_5$};
\fill[cyan] (5,2) circle [radius = .05cm];
\draw[->] (-5.5,-.2) -- (5.5,-.2);
\draw (-4.85,-.2) node[anchor=south east] {$0$};
\draw (-4.85,-.3) -- (-4.85,-.1); 
\draw (5,-.3) -- (5,-.1); 
\draw (5,-.2) node[anchor=south west] {$1$};
\draw (-2.15,3) node[anchor= south] {$\ell^+_1$};
\fill[cyan] (-2.15,3) circle [radius = .05cm];
\draw (-2.85,3.1) node[anchor=north east] {$\mf u_1$};
\fill[cyan] (-2.85,3) circle [radius = .05cm];
\draw (2.96,2) node[anchor=west] {$\mf u_4 = \ell_2$};
\fill[cyan] (2.96,2) circle [radius = .05cm];
\draw (-4.15,5) -- (-4.15, 0);
\draw[red, thick] (-4.85,3) -- (-4.15,3);
\draw (-4.5,3) node[anchor = north]{$\ms A_1$};
\draw[red, very thick] (-4.15,0) -- (-2.85,0);
\draw (-3.5,0) node[anchor=south] {$\Sigma_1$};
\draw[brown, very thick] (-5.4,-.4) -- (-4.15, -.4);
\draw (-4.65,-.4) node[anchor=north] {$\Lambda_0$};
\draw[brown, very thick] (-2.85,-.4) -- (2.61, -.4);
\draw (0.5,-.4) node[anchor=north] {$\Lambda_1$};
\draw[brown, very thick] (2.96,-.4) -- (5.5,-.4);
\draw (4,-.4) node[anchor=north] {$\Lambda_2$};
\end{tikzpicture}
\caption{This figure represents the graph on the interval
  $[-\delta,1+\delta]$ of a functions $S:\bb R\to \bb R$ associated by
  \eqref{34} to a vector field $b$ defined on the torus $\bb T$. In
  this example, the set $\{\theta\in \bb T : b(\theta)=0\}$ has $p=10$
  connected components. There are $q=4$ local maxima, and $\bs m =2$
  landscapes indicated in brown. The landscape $\Lambda_1$ has two
  valleys represented in blue, while the landscape $\Lambda_2$ has
  only $1$ valley. The two saddle intervals $\Sigma_1$, $\Sigma_2$ are
  displayed in red.}
\label{fig1}
\end{figure}
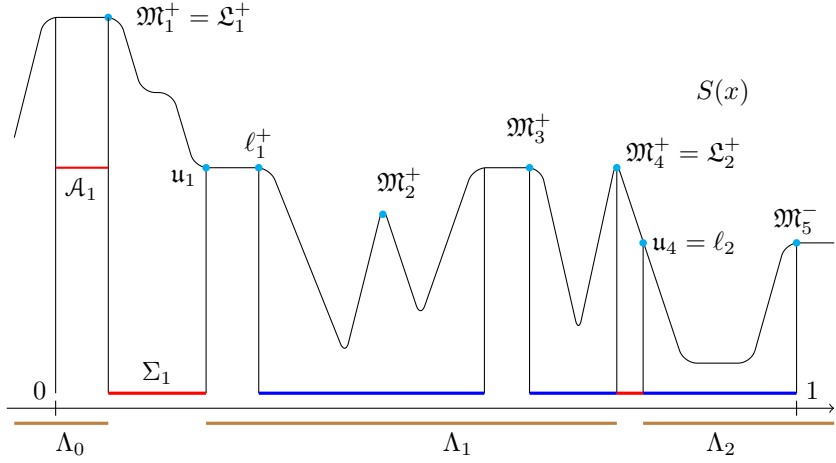

We introduce in this section the notion of valleys and
landscapes which play an important role in the description
of the quasi-potential $\widehat V$.

Let $\ms A_k = [\mf M^-_k, \mf M^+_k]$, $1\le k\le q$, $q\ge 0$,
(resp. $\ms U_j = [\mf m^-_j, \mf m^+_j]$, $1\le j\le q'$, $q'\ge 0$)
be the sub-intervals of $[0,1)$ where $S$ assumes a local maximum
(resp. minimum). For the local maxima, this means that $b$ vanishes on
each interval $[\mf M^-_k, \mf M^+_k]$ and that $b'(\mf M^+_k)>0$,
$b'(\mf M^-_k)>0$. Similar relations hold for the intervals where $S$
attains a local minimum. Note that $q$, $q'$ might be equal to $0$.
The set $\ms A_1$ is represented in Figure \ref{fig1}.

Since each local maxima is succeeded by a local minima, $q'$ must be
equal to $q$.  The intervals $\ms A_k$, $\ms U_k$ might be reduced to
points, and they are supposed to be ordered in the sense that $0\le
\mf M^-_1 \le \mf M^+_1 < \mf m^-_1 \le \mf m^+_1 < \mf M^-_2 < \cdots
< \mf M^+_q < \mf m^-_q \le \mf m^+_q< 1$. For $m\in \bb Z$, $1\le
j\le q$, let $\mf M^\pm_{j+mq} = m+ \mf M^\pm_j$, $\mf m^\pm_{j+mq} =
m+ \mf m^\pm_j$. 

Observe that
\begin{equation}
\label{40}
z(x) \;=\; x \;\;\text{or}\;\;
z(x) \;\in\; \big\{\mf M^+_k : \mf M^+_k \,>\, x\big\}\;.
\end{equation}
Indeed, if $z(x)\not = x$, $z(x)$ must be greater than $x$ and $z(x)$
must be the right endpoint of an interval where $S$ attains a local
maximum. \smallskip

If $q=0$, the diffusion $X_\epsilon (t)$ has a nonnegative drift. In
this case, $S$ is a non-increasing function and $S(x)=S(z_x)$ for all
$x\in \bb R$ so that the quasi-potential vanishes, $\widehat V\equiv
0$. Conversely, if $\widehat V\equiv 0$, $S(z_x) = S(x)$ for all $x$,
which implies that $S$ has no local minima. Hence,
\begin{equation*}
\widehat V\equiv 0 \quad\text{if and only if}\quad b\;\ge\; 0
\quad\text{if and only if}\quad q\;=\;0 \;.
\end{equation*}
Assume from now on
that $q\ge 1$.  Consider a local maximum $\mf M^+_k$ such that
\begin{equation}
\label{53}
z(\mf M^+_k) \;=\; \mf M^+_k  \;.
\end{equation}
There is at least one maximum which comply with this condition: if
$\mf M^+_1$ does not fulfill it, then $z(\mf M^+_1)$ does. In Figure
\ref{fig1}, $\mf M^+_1$, $\mf M^+_4$ are the maxima which satisfy this
condition.

Denote by $\bs m$ the number of local maxima which satisfy condition
\eqref{53}, and represent them by $\mf L^\pm_n = \mf M^\pm_{j(n)}$,
$1\le n\le \bs m$, for some sequence $1\le j(1)< \cdots < j(\bs m)\le
q$. As $\bs m \ge 1$ and $\bs m \le q$, we have that $0\le \mf L^+_1 <
\dots < \mf L^+_{\bs m} <1$. Assume, without loss of generality, that
$\mf L^+_1 = \mf M^+_1$, and extend the definition of the maxima $\mf
L^\pm_k$ by setting $\mf L^\pm_{n+r\bs m} = r + \mf L^\pm_{n}$, $r\in
\bb Z$, $1\le n\le \bs m$.

\begin{asser}
\label{as12}
Fix a point $\mf L^+_n = \mf M^+_{j(n)}$. We claim that
$\mf L^+_{n+1} = z(\mf M^+_{j(n)+1})$. 
\end{asser}

\begin{proof}
Figure \ref{fig1} illustrates
this assertion, as $\mf L^+_2 = z(\mf M^+_2)$. To prove the claim, we
have to show that $z(\mf M^+_{j(n)+1})$ fulfills \eqref{53} and that
no maxima $\mf M^+_k$ in the interval
$(\mf M^+_{j(n)} , z(\mf M^+_{j(n)+1}))$ satisfies \eqref{53}.  It is
clear that $z(\mf M^+_{j(n)+1})$ fulfills \eqref{53}. We turn to the
second property. By \eqref{40}, 
$z(\mf M^+_{j(n)+1}) = \mf M^+_\ell$ for some $\ell \ge j(n)+1$. 
By definition of $z$, $S(\mf M^+_k) \le S(z(\mf M^+_{j(n)+1}))$ or all 
$j(n)+1 \le k< \ell$. Hence $\mf M^+_k$ does not satisfy \eqref{53}
for $k$ in this range, which proves the claim. 
\end{proof}

Let $\mf M^+_j$ be a maximum for which \eqref{53} holds. Consider the
next maximum, $\mf M^+_{j+1}$. As $z(\mf M^+_{j}) = \mf M^+_{j}$,
$S(\mf M^+_{j+1}) < S(\mf M^+_{j})$ and $S(z(\mf M^+_{j+1})) < S(\mf
M^+_{j})$. Let $\mf u_{j}$ be the first point $x$ larger than $\mf
M^+_{j}$ such that $S(x) = S(z(\mf M^+_{j+1}))$:
\begin{equation*}
\mf u_{j} \;=\; \inf \big\{x \ge \mf M^+_{j} : 
S(x) = S(z(\mf M^+_{j+1})) \big\} \;.
\end{equation*}
We refer to Figure \ref{fig1} for a representation of $\mf u_{1}$ and
$\mf u_{4}$. It is clear that $S(\mf u_{j}) = S(z(\mf M^+_{j+1}))$.

\begin{asser}
\label{as13}
Suppose that $\mf M_j^+$ satisfies \eqref{53}. Then, we have that 
\begin{equation*}
\widehat V(x) \;=\;
\begin{cases}
0   & \mf M^-_{j} \;\le\; x \;\le \; \mf u_{j}\;, \\
S(x) \,-\, S(z(\mf M^+_{j+1})) & 
\mf u_{j} \;\le\; x \;\le \;z(\mf M^+_{j+1})\;.
\end{cases}
\end{equation*}
In particular, $\widehat V(\mf u_{j}) = \widehat V(z(\mf M^+_{j+1})) =
0$, and on the set $[\mf u_{j} , z(\mf M^+_{j+1})]$, the functions
$\widehat V$ and $S$ differ by an additive constant. 
\end{asser}

\begin{proof}
We leave to the reader to check that
\begin{equation*}
S(z(x))\;=\;
\begin{cases}
S(x)   & \text{for $\mf M^-_{j} \;\le\; x \;\le \; \mf u_{j}$,} \\
S(z(\mf M^+_{j+1})) & \text{for $\mf u_{j} \;\le\; x \;\le \;z(\mf M^+_{j+1})$.}
\end{cases}
\end{equation*}
The assertion follows from this identity and from the definition of
$\widehat V$.
\end{proof}

Note that it is not true that $z(x)= x$ for $x\in [\mf M^+_{j} , \mf
u_{j}]$ because there might be subintervals of $[\mf M^+_{j} , \mf
u_{j}]$ where $S$ is constant. We refer to Figure \ref{fig1}. In
contrast, $z(x) = z(\mf M^+_{j+1})$ for $x\in [\mf u_{j} , z(\mf
M^+_{j+1})]$.

Consider a maximum $\mf L^+_n = \mf M^+_{j(n)}$. The previous result
characterizes the function $\widehat V$ in the interval $[\mf L^-_n ,
z(\mf M^+_{j(n) +1})]$.  Since, by Assertion \ref{as12}, $z(\mf
M^+_{j(n) +1}) = \mf L^+_{n+1}$ Assertion \ref{as13} provides in fact
a representation of the quasi-potential in the interval $[\mf L^-_n ,
\mf L^+_{n+1}]$ and, therefore, in $\bb T$.

Let $\ell_n = \mf u_{j(n)}$, $\ell_{r\bs m +n} = r + \ell_{n}$,
$r\in \bb Z$, $1\le n\le \bs m$, and let
\begin{equation}
\label{26}
\Sigma_n \;=\; (\mf L^+_n, \ell_n)\;, \quad
\Lambda_n \;=\; [\ell_n, \mf L^+_{n+1}]  \;, \quad
\Sigma \;=\; \bigcup_{n=1}^{\bs m} \Sigma_n \;, \quad
\Lambda \;=\; \bigcup_{n=1}^{\bs m} \Lambda_n\;.
\end{equation}
The sets $\Lambda_n$ are named landscapes and the sets $\Sigma_n$
saddle intervals. Of course, $\{\Sigma, \Lambda\}$ forms a partition
of $\bb T$.  

\begin{remark}
\label{rm6}
By Assertion \ref{as13} and by definition of $\Lambda_n$, $\Sigma_n$,
$\widehat V$ vanishes on $\Sigma$ and $\widehat V$ and $S$ differ by
an additive constant on each landscape $\Lambda_n$. This additive
constant may be different at each set $\Lambda_n$.
\end{remark}

Note that $\mf L^-_n < \ell_n$. Hence, even if the connected components
of the set $\{\theta\in\bb T : b(\theta)=0\}$ are points (that is, if
$r_i=l_i$ for all $1\le i\le p$), the intervals $\Sigma_n$ at which the
quasi-potential $\widehat V$ vanishes are non-degenerate. (See Figure
\ref{if01}). 
 
By Assertion \ref{as13}, $S(\ell_n) = S(\mf L^+_{n+1})$. If $\ell_n =
l_k$ for some $1\le k\le p$, let $\ell^+_n = r_k$, otherwise let
$\ell^+_n = \ell_n$:
\begin{equation*}
\ell^+_n \;=\; \sup \big\{ x\ge \ell_n : S(y) = S(\ell_n) \text{ for all }
  \ell_n \le y\le x\, \big\}\;.
\end{equation*}

Each landscape $\Lambda_n = [\ell_n, \mf L^+_{n+1}]$ may contain in
$(\ell^+_n , \mf L^-_{n+1})$ local maxima $\mf M^+_k$ of $S$ such that
$S(\mf M^+_k) = S(\ell_n)$ (and thus $\widehat V(\mf M^+_k) =0$). Let
$\ell^+_n < \mf M^+_{a(n,1)} < \dots < \mf M^+_{a(n,r_n-1)} < \mf
L^-_{n+1}$ be an enumeration of these local maxima. Set $a(n,r_n) =
j(n+1)$ so that $\mf M^+_{a(n,r_n)} = \mf M^+_{j(n+1)} = \mf
L^+_{n+1}$. The sets
\begin{equation}
\label{54}
\begin{aligned}
& \ms V_{n,1} \;=\; (\ell^+_n, \mf M^-_{a(n,1)}) \;, \quad
\ms V_{n,r_n} \;=\;  (\mf M^+_{a(n,r_n-1)}, \mf L^-_{n+1})    \;, \\
&\quad \ms V_{n,j+1} \;=\;  (\mf M^+_{a(n,j)}, \mf M^-_{a(n, j+1)})\;,
\quad 1\le j\le r_n-2\;.
\end{aligned}
\end{equation}
are called the valleys of the landscape $\Lambda_n$. To simplify some
equations below let $\mf M^+_{a(n,0)} = \ell^+_{n}$, $\mf M^-_{a(n,0)}
= \ell^-_{n}= \ell_{n}$. Note that there is an abuse of notation since
$\mf M^\pm_{a(n,0)} = \ell^\pm_{n}$ may not be a maxima.

In Figure \ref{fig1}, the landscape $\Lambda_1 = [\mf u_1, \mf M^+_4]$
has $2$ valleys, $(\ell^+_1, \mf M^-_3)$ and $(\mf M^+_3, \mf M^-_4)$,
while the landscape $\Lambda_2=[\mf u_4, \mf M^+_5]$ has only one
valley. Each landscape has at least one valley. If there are no local
maxima $\mf M^+_k$ of $S$ in $(\ell^+_n , \mf L^-_{n+1})$ such that
$S(\mf M^+_k) = S(\ell_n)$, then $r_n=1$ and the set $(\ell^+_n, \mf
L^-_{n+1})$ forms a valley.  

\begin{remark}
\label{rm1}
The quasi-potential $\widehat V$ may have plateaux $\{\theta \in \bb T
: \widehat V(\theta)=0\}$ which are not saddle intervals, but which
belong to a landscape. This happens if one of the local maxima $\mf
M^+_{a(n,j)}$, $0\le j\le r_n$, introduced above is such that $\mf
M^+_{a(n,j)} \not = \mf M^-_{a(n,j)}$ [with the convention adopted
concerning $a(n,0)$ and $a(n,r_n)$]. This possibility is illustrated
in Figure \ref{fig1} by the intervals $[\mf M^-_3, \mf M^+_3]$,
$[\ell_1, \ell^+_1]$. However, if the connected components of the set
$\{\theta \in \bb T : b(\theta)=0\}$ are points, all plateaux of
$\widehat V$ are saddle intervals because in the landscapes the
quasi-potential differ from $S$ by an additive constant.
\end{remark}

\begin{remark}
\label{rm3}
In a landscape $\Lambda_n = [\ell_n, \mf L^+_{n+1}]$, the process
$X_\epsilon(t)$ evolves among the valleys $\ms V_{n,j}$ as a
reversible process until it leaves $\Lambda_n$. Since $S(\mf L^-_{n})
> S(\ell_n)= S(\mf L^+_{n+1})$, with a probability exponentially close
to $1$, $X_\epsilon(t)$ leaves the landscape $\Lambda_n$ through the
saddle interval $\Sigma_{n+1}$. In $\Sigma_{n+1}$, as the drift is
nonnegative, $X_\epsilon(t)$ slides to the next landscape
$\Lambda_{n+1}$. Once in $\Lambda_{n+1}$, with a probability
exponentially close to $1$, the process $X_\epsilon(t)$ does not
return to $\Sigma_{n+1}$. In particular, the saddle interval
$\Sigma_{n+1}$ is only visited during the excursion from $\Lambda_{n}$
to $\Lambda_{n+1}$. This explains why the quasi-potential $\widehat V$
vanishes on the saddle intervals.
\end{remark}

\begin{remark}
\label{rm2}
It is not possible to recover $S$ from $\widehat V$.  Given a maximal
interval $[\theta_1, \theta_2]$ at which $\widehat V$ is constant
equal to $0$, it not possible to determine whether this interval is a
saddle interval or whether it belongs to a landscape. However, if the
connected components of $\{\theta\in\bb T : b(\theta)=0\}$ are points,
it is possible to recover $S$ from $\widehat V$ and the pre-factor
introduced in the next subsection.
\end{remark}

\section{The stationary state}
\label{sec0}

One important question in the theory of non-reversible Markovian
dynamics is to access the stationary state. Bounds for the
quasi-potential with small exponential errors can be deduced from the
theory of large deviations \cite{fw}. We present in Propositions
\ref{l01}, \ref{l02} and \ref{p02} below sharp asymptotics for the
first-order term of the expansion in $\epsilon$ of the
quasi-potential, the so-called pre-factor, defined in \eqref{i01}
below.

Precise estimates of the pre-factor play a central role in the
derivation of the metastable behavior of a random process based on the
potential theory, as one needs to evaluate the measure of a valley and
the capacity between valleys (cf. \cite{begk, bl2, bl7}). An
asymptotic analysis of the pre-factor for non-reversible dynamics
similar to the one presented in this section has never been carried
out before.

One available tool to obtain estimates for the pre-factor is the
Hamilton-Jacobi equation (cf. \eqref{16} below). Write this equation
as $H_\epsilon(F_\epsilon)=0$. One is tempted to argue that
$F_\epsilon$ should converge, as $\epsilon\to 0$, to the solution of
$H_0(F_0)=0$. We show in Subsection \ref{sec0}.5 the limits of this
analysis, proving that $F_\epsilon$ converges to a function which is
discontinuous at the saddle points.

The main results of this section are based on the explicit expression
\eqref{79} for the stationary state obtained by Faggionato and
Gabrielli in \cite{fg1}. Some of the claims below appear in
\cite{fg1}. They are stated here in sake of completeness as they will
be used in the next sections.

\smallskip\noindent{\bf \ref{sec0}.1. Definition.}
Recall the definition of the quasi-potential $\widehat V$ and let
$V:\bb T \rightarrow \bb R_+$ be the non-negative function given by
\begin{equation}
\label{19}
V(\theta) \;=\; \widehat V(\theta) \;+\; H \;.
\end{equation}
Since the quasi-potential is defined up to constants, $V$ can be
regarded as another version of the quasi-potential.  Write the density
$m_\epsilon (\theta)$ of the stationary distribution as
\begin{equation}
\label{i01}
m_\epsilon (\theta) \;=\; F_\epsilon (\theta) \, e^{- V
  (\theta)/\epsilon}\;.
\end{equation}
The function $F_\epsilon$ is called the pre-factor, and corresponds to
the first order correction of the quasi-potential.  

\smallskip\noindent{\bf \ref{sec0}.2. Sharp asymptotics.}  We
introduce three functions $G_k: \bb T \to \bb R$, $0\le k\le 2$, which
appear in the pre-factor. These functions are defined separately on
each interval $\Lambda_n$, $\Sigma_n$, $1\le n\le \bs m$.

We first consider the landscape. Fix $1\le n\le \bs m$ and consider
the set $\Lambda_n = [\ell_n, \mf L^+_{n+1}] = [\mf u_{j(n)}, \mf
M^+_{j(n+1)}]$.  Denote by $G_0: \Lambda_n \to \bb R_+$ the function
given by
\begin{equation*}
G_0(x) \; =\; \int_x^{\mf L^+_{n+1}} \bs 1\{ S(y) = S(\ell_n)\}\, dy \;.
\end{equation*}
In this formula, $\mb 1\{A\}$ takes the value $1$ if $A$ holds and $0$
otherwise. The value of $G_0$ at $x$ provides the Lebesgue measure of
the set $[x, \mf L^+_{n+1}] \cap \{ y\in \bb R: S(y) =
S(\ell_n)\}$. Note that $G_0$ is non-increasing, that it is constant
on each valley of the landscape $\Lambda_n$, and that it vanishes if
the connected components of $\{\theta\in \bb T : b(\theta)=0\}$ are
points.

We turn to the definition of $G_1$. Denote by $\mf n^\pm_i$, $1\le
i\le q$, a local maximum $\mf M^\pm_i$ or a local minimum $\mf
m^\pm_i$ of $S$, and let
\begin{equation}
\label{07b}
\omega_+(\mf n^+_i) \;=\; \sqrt{\frac{\pi}{2\, |\, b'(\mf n^+_i+)| }} \;, \quad
\omega_-(\mf n^-_i) \;=\; \sqrt{\frac{\pi}{2\, |b'(\mf n^-_i-)|}} \;.
\end{equation}
Recall the definition of the valleys $\ms V_{n,j}$, $1\le j\le r_n$,
introduced in \eqref{54}, and that $\mf M^+_{a(n,0)} = \ell^+_{n}$.
Denote by $G_1 : \Lambda_n \to \bb R_+$ the function given by
\begin{equation}
\label{27b}
\begin{aligned}
& G_1(x) \; =\; \omega_+(\mf M^+_{a(n,0)})\, \mb 1\{b(\mf M^+_{a(n,0)})=0 \,,\,
x \le  \mf M^+_{a(n,0)} \} \\
&\quad + \; \sum_{j=1}^{r_n} \Big[ \omega_-(\mf M^-_{a(n,j)}) \mb
1\{x < \mf M^-_{a(n,j)} \} \;+\; \omega_+(\mf M^+_{a(n,j)}) \mb
1\{x \le  \mf M^+_{a(n,j)} \} \Big] \;.
\end{aligned}
\end{equation}

\begin{remark}
\label{rm5}
The function $G_1$ is non-increasing. It may be discontinuous at $\mf
M^+_{a(n,0)}$, it is discontinuous at the points $\mf M^-_{a(n,j)}$,
$\mf M^+_{a(n,j)}$, $1\le j\le r_n$, and it is constant on the valleys
$\ms V_{n,k} = (\mf M^+_{a(n,k)} , \mf M^-_{a(n,k+1)})$, $0\le
k< r_n$.  Actually, we defined the valleys as open intervals instead
of closed ones for the last property to hold.
\end{remark}

We turn to the definition of the pre-factor on the saddle intervals.
Fix $1\le n\le \bs m$ and consider the set $\Sigma_n = (\mf L^+_{n} ,
\ell_n) = (\mf M^+_{j(n)}, \mf u_{j(n)})$.  This set may contain
connected components of the set $\{\theta\in \bb T: b(\theta) =0\}$.
Denote by $s_n\ge 0$ the number of such components and by $[c^-_{n,1},
c^+_{n,1}], \dots , [c^-_{n,s_n}, c^+_{n,s_n}]$ the components. Note
that some of these intervals might be points: $c^-_{n,i}$ may be equal
to $c^+_{n,i}$.  Assume that these intervals are ordered in the sense
that $c^-_{n,i} < c^-_{n,i+1}$.

Let
\begin{equation}
\label{32}
\ms F_n \;=\; \bigcup_{j=1}^{s_n} [c^-_{n,j} , c^+_{n,j}] \;,\quad
\ms G_n \;=\; \Sigma_n \setminus  \ms F_n\;,\quad
\ms F \;=\; \bigcup_{n=1}^{\bs m} \ms F_n\;, \quad
\ms G \;=\; \bigcup_{n=1}^{\bs m} \ms G_n \;.
\end{equation}
Define $G_0 \colon \Sigma_n \to \bb R_+$ as
\begin{equation*}
G_0(x) \; =\; z(x) \,-\, x\;.
\end{equation*}
As $S$ is non-increasing on $\Sigma_n$, $G_0$ vanishes on $\ms G_n$
and $G_0(x) = c^+_{n,j} - x$ on the interval
$[c^-_{n,j} , c^+_{n,j}]$:
\begin{equation*}
G_0(x) \; =\; \sum_{j=1}^{s_n} (c^+_{n,j} -x)\, 
\chi_{[c^-_{n,j} , c^+_{n,j}]}(x) \;.
\end{equation*}
In this formula and below, $\chi_A$, $A\subset \bb R$, represents the
indicator function of the set $A$:
\begin{equation*}
\chi_A(x) \;=\; 1 \;\;\text{for}\;\; x\in A\;,\quad
\chi_A(x) \;=\; 0 \;\;\text{otherwise}\;.
\end{equation*}

Define $G_1 \colon \Sigma_n \to \bb R_+$ as
\begin{equation*}
G_1(x) \; =\; \sum_{j=1}^{s_n} \omega_+(c^+_{n,j})\, 
\chi_{[c^-_{n,j} , c^+_{n,j}]}(x) \;.
\end{equation*}
As $G_0$, the function $G_1$ vanishes on on $\ms G_n$.  Finally,
define $G_2 \colon \Sigma_n \to \bb R_+$ as
\begin{equation*}
G_2(x) \; =\; \frac 1{b(x)} \, \chi_{\ms G_n} (x)\;.
\end{equation*}
We are now in a position to present a sharp asymptotics for
$\pi_\epsilon(\cdot)$. 

\begin{proposition}
\label{l01}
Assume that $q\ge 1$. Then, 
\begin{enumerate}
\item (Sharp estimate on the landscapes)
\begin{equation*}
\lim_{\epsilon\to 0} \sup_{x\in\Lambda} \frac 1{\sqrt{\epsilon}}  \, 
e^{\widehat V(x)/\epsilon} \Big| \, \pi_\epsilon (x) \,-\,
e^{-\widehat V(x)/\epsilon} \big\{\, G_0(x) \,+\,
\sqrt{\epsilon}\, G_1(x)  \big\} \Big| \;=\; 0\;.
\end{equation*}
\item (Sharp estimate on the saddle intervals) On the set $\ms F$,
\begin{equation*}
\pi_\epsilon (x) \;=\; 
\big\{ G_0 (x) \;+\; \sqrt{\epsilon}\, G_1 (x) 
\;+\; o(\sqrt{\epsilon}) \big\} \; e^{ - \widehat V(x)/ \epsilon}\;,
\end{equation*}
and on the set $\ms G$,
\begin{equation*}
\pi_\epsilon (x) \;=\;  
[1+o(1)] \, \epsilon \, G_2 (x) \; e^{ - \widehat V(x)/ \epsilon}\;.
\end{equation*}
\end{enumerate}
In these formulas and below, $o(\sqrt{\epsilon})$, resp. $o(1)$ represent
quantities [which may depend on $x$] with the property that
$\lim_{\epsilon \to 0} o(\sqrt{\epsilon})/ \sqrt{\epsilon}=0$, resp.
$\lim_{\epsilon \to 0} o(1)=0$.
\end{proposition}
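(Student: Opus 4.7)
The plan is to perform a careful Laplace-type asymptotic analysis on the explicit integral
\[
e^{\widehat V(x)/\epsilon}\,\pi_\epsilon(x) \;=\; \int_x^{x+1} e^{[S(y) - S(z_x)]/\epsilon}\,dy,
\]
where the identity follows from $\widehat V(x) = S(x) - S(z_x)$. Since $S(z_x) = \max\{S(y) : y \ge x\}$, the integrand is bounded by $1$, equals $1$ exactly on the plateaus of $S$ at height $S(z_x)$ contained in $[x, x+1]$, and is exponentially small away from a small neighborhood of the level set $\{y \ge x : S(y) = S(z_x)\}$. Three types of contributions will emerge: (i) plateau contributions of order $1$ summing to $G_0(x)$; (ii) quadratic Laplace contributions at the endpoints of those plateaus, of order $\sqrt{\epsilon}$, summing to $\sqrt{\epsilon}\,G_1(x)$; and, in the subregime $b(x)>0$ of a saddle interval, (iii) an exponential edge contribution of order $\epsilon$ producing $\epsilon\,G_2(x)$.

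For part (1), fix $x \in \Lambda_n = [\ell_n, \mf L^+_{n+1}]$. Then $z_x = \mf L^+_{n+1}$ and $S(z_x) = S(\ell_n)$, and the local maxima of $S$ at this height inside $[x, \mf L^+_{n+1}]$ are precisely (the tail of) $\mf M^\pm_{a(n,j)}$, $j=0,\ldots,r_n$. I would split the integration domain into small $\epsilon$-independent neighborhoods $U^\pm_j$ of these endpoints and the flat segments between them. The flat segments contribute $G_0(x)$ exactly, since the integrand there is $1$. On each $U^\pm_j$, the Morse hypothesis (\textbf{H3}) gives
\[
S(y) - S(\mf M^\pm_{a(n,j)}) \;\approx\; -\tfrac{1}{2}\,b'(\mf M^\pm_{a(n,j)}\pm)\,(y - \mf M^\pm_{a(n,j)})^2,
\]
and a standard Gaussian computation yields $\sqrt{\epsilon}\,\omega_\pm(\mf M^\pm_{a(n,j)}) + o(\sqrt{\epsilon})$ precisely when the corresponding tail is included in the integration range---which is the condition encoded by the indicators in \eqref{27b}. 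The remaining portion of $[x, x+1]$, which traverses $\Sigma_{n+1}$ and enters the next landscape, satisfies $S(y) \le S(\ell_n) - c$ with a uniform $c > 0$ (since successive heights $S(\mf L^+_{n+1}) > S(\mf L^+_{n+2})$ differ by a positive gap), giving an $O(e^{-c/\epsilon})$ remainder. Summing and taking the sup over the finite disjoint union $\Lambda$ yields part (1).

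For part (2), fix $x \in \Sigma_n$ and use Assertion \ref{as13} to conclude $\widehat V(x) = 0$. If $x \in \ms F_n$ lies in $[c^-_{n,j_0}, c^+_{n,j_0}]$, then $z_x = c^+_{n,j_0}$: the integrand equals $1$ on the residual plateau and on every subsequent plateau $[c^-_{n,j}, c^+_{n,j}]$ with $j > j_0$ (contributing $G_0(x)$), each right endpoint $c^+_{n,j}$ is a Morse edge with $b'(c^+_{n,j}+) > 0$ and contributes $\sqrt{\epsilon}\,\omega_+(c^+_{n,j})$ via the same Gaussian argument (summing to $\sqrt{\epsilon}\,G_1(x)$), and everything downstream is again $O(e^{-c/\epsilon})$. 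If $x \in \ms G_n$, then $b(x) > 0$ strictly, so $z_x = x$ and both $G_0(x)$ and $G_1(x)$ vanish; the dominant contribution to $\int_x^{x+1} e^{[S(y) - S(x)]/\epsilon}\,dy$ comes from an $\epsilon$-thin layer near $y = x$ where $S(y) - S(x) \approx -b(x)(y - x)$, and the rescaling $y - x = \epsilon u$ produces $\epsilon\int_0^\infty e^{-b(x)u}\,du + o(\epsilon) = \epsilon/b(x) + o(\epsilon) = \epsilon\,G_2(x) + o(\epsilon)$, with all other portions exponentially smaller.

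The main obstacle is establishing the uniform error statement in part (1): the approximant $G_0(x) + \sqrt{\epsilon}\,G_1(x)$ is piecewise constant/linear with jumps at the critical points $\mf M^\pm_{a(n,j)}$, while $e^{\widehat V(x)/\epsilon}\pi_\epsilon(x)$ is smooth in $x$. The discrepancy is therefore governed by error functions on transition zones of width $\sim\sqrt{\epsilon}$ around each jump, and the delicate point is that the convention of strict versus non-strict inequalities in \eqref{27b} has been engineered precisely so that the step-function approximation matches the smooth Gaussian integral to the order required after dividing by $\sqrt{\epsilon}$. Once these transition zones are controlled and the exponentially-small tails coming from outside the critical level set are estimated uniformly via the strict gap $S(\mf L^+_{n+1}) - S(\mf L^+_{n+2}) > 0$, the sup over the compact set $\Lambda$ follows by piecing together finitely many such local estimates.
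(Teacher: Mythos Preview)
Your overall strategy---Laplace asymptotics on the explicit integral $e^{\widehat V(x)/\epsilon}\pi_\epsilon(x) = \int_x^{x+1} e^{[S(y)-S(z_x)]/\epsilon}\,dy$, decomposed into plateau contributions ($G_0$), Gaussian half-edge contributions at the plateau endpoints ($\sqrt{\epsilon}\,G_1$), and exponentially small remainders---is exactly the route the paper takes (Lemma~\ref{l09} for part~(1), and Assertions~\ref{as09a}--\ref{as10} for the local estimates).

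There is, however, a genuine error in your treatment of part~(2) on $\ms F$. You claim that for $x \in [c^-_{n,j_0}, c^+_{n,j_0}]$ the integrand equals $1$ ``on every subsequent plateau $[c^-_{n,j}, c^+_{n,j}]$ with $j > j_0$'' and that each subsequent right endpoint contributes a Gaussian term. This is false: on the saddle interval $\Sigma_n$ one has $b \ge 0$, so $S$ is non-increasing and \emph{strictly} decreasing on the pieces of $\ms G_n$ separating consecutive plateaus. Consequently $S(c^+_{n,j}) < S(c^+_{n,j_0}) = S(z_x)$ for every $j > j_0$, and those later plateaus contribute only $O(e^{-c/\epsilon})$. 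This is precisely why the definitions give $G_0(x) = c^+_{n,j_0} - x$ and $G_1(x) = \omega_+(c^+_{n,j_0})$ on $[c^-_{n,j_0}, c^+_{n,j_0}]$, recording only the current plateau and its single right edge, not a sum over $j \ge j_0$. Your argument as written would yield a different (incorrect) formula. Separately, your remark that the strict/non-strict conventions in \eqref{27b} are what rescue the uniformity in part~(1) is not the right explanation: those conventions only affect $G_1$ at isolated points, not on the $\sqrt{\epsilon}$-wide transition layers where the discrepancy you identify actually lives.
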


We turn to the normalizing constant $c(\epsilon)$. For $1\le k\le q$,
let $\sigma (\mf m^+_k)$ be the weights given by
\begin{equation}
\label{39}
\sigma (\mf m^+_k) 
\;=\; \omega_+(\mf m^+_k) \,+\, \omega_-(\mf m^-_k)\;,
\end{equation}
where the weights $\omega_\pm$ have been introduced in \eqref{07b}. 
Denote by $H$ the depth of the deepest well,
\begin{equation*}
H \;=\; -\, \min_{\theta\in\bb T}  \widehat V (\theta) \;=\;
\max_{0\le x<1} \big\{ S(z_x) - S(x) \big\} \;.
\end{equation*}
Clearly, $H>0$ because
$H\ge S(z(\mf m^+_1)) - S(\mf m^+_1) = S(z(\mf M^+_2)) - S(\mf m^+_1)
\ge S(\mf M^+_2) - S(\mf m^+_1) > 0$.
Let $\bb I$ be the set given by
\begin{equation}
\label{37}
\bb I \;=\; \big\{ j\in \{1, \dots, q\} : \widehat V (\mf m^+_j) = -\,
H\big\} \;,
\end{equation}
and let $Z_\epsilon$ be the normalizing constant given by
\begin{equation*}
Z_\epsilon\;=\; 
\sum_{j\in\bb I} \Big\{ G_0(\mf m^+_j)  \;+\;
\sqrt{\epsilon}\,  G_1(\mf m^+_j) \, \Big\}\,
\Big\{ \, [\mf m^+_j - \mf m^-_j ] \,+\, 
\sqrt{\epsilon} \, \sigma (\mf m^+_j) \, \Big\}\;.
\end{equation*}

\begin{proposition}
\label{l02}
Assume that $q\ge 1$. Then, 
\begin{equation*}
c(\epsilon) \;=\; \big[ 1 + o(\sqrt{\epsilon}) \big]\, Z_\epsilon
\, e^{H /\epsilon} \; .
\end{equation*}
\end{proposition}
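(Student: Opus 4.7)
The strategy is to evaluate $c(\epsilon) = \int_{\bb T} \pi_\epsilon(x)\, dx$ by splitting $\bb T$ according to the decomposition $\bb T = \Lambda \cup \Sigma$, identifying the deepest wells $\{\mf m^+_j : j\in\bb I\}$ as the only asymptotically relevant regions, and applying a Laplace analysis at each such well using the sharp asymptotics of Proposition \ref{l01}.

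First I would discard the negligible contributions. On the saddle intervals, $\widehat V\equiv 0$, so Proposition \ref{l01}(2) yields $\pi_\epsilon = O(1)$ and hence $\int_\Sigma \pi_\epsilon\, dx = O(1) = o(\sqrt{\epsilon}\, e^{H/\epsilon})$. On any valley $\ms V_{n,j}$ that does not contain some $\mf m^+_k$ with $k\in\bb I$, the minimum of $\widehat V$ exceeds $-H$, so Proposition \ref{l01}(1) together with a crude bound on $\int_{\ms V_{n,j}} e^{-\widehat V/\epsilon}\, dx$ gives a contribution of order $e^{(H-\delta)/\epsilon}$ for some $\delta>0$, which is exponentially smaller than $e^{H/\epsilon}$.

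For the dominant contribution I would fix $j\in\bb I$ and compute $\int_{\ms V^{(j)}} \pi_\epsilon\, dx$, where $\ms V^{(j)}$ denotes the valley containing $\mf m^+_j$. Since $G_0$ and $G_1$ are constant on each valley (Remark \ref{rm5}), Proposition \ref{l01}(1) rewrites the integrand as
\[
\bigl[G_0(\mf m^+_j) + \sqrt{\epsilon}\, G_1(\mf m^+_j)\bigr]\, e^{-\widehat V(x)/\epsilon} \;+\; \sqrt{\epsilon}\, \eta_\epsilon(x)\, e^{-\widehat V(x)/\epsilon},
\]
with $\sup_\Lambda |\eta_\epsilon|\to 0$. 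It then remains to evaluate $I_j(\epsilon) := \int_{\ms V^{(j)}} e^{-\widehat V/\epsilon}\, dx$. By Remark \ref{rm6}, $\widehat V$ and $S$ differ by an additive constant on the landscape of $\mf m^+_j$, and $\widehat V \equiv -H$ on the flat piece $[\mf m^-_j, \mf m^+_j]$, which contributes $(\mf m^+_j - \mf m^-_j)\, e^{H/\epsilon}$ exactly. Near each endpoint, the standard one-sided Gaussian Laplace expansion using $S''(\mf m^\pm_j\,\pm) = -\,b'(\mf m^\pm_j\,\pm)$ contributes $\sqrt{\epsilon}\, \omega_-(\mf m^-_j)\, e^{H/\epsilon}(1+o(1))$ on the left and $\sqrt{\epsilon}\, \omega_+(\mf m^+_j)\, e^{H/\epsilon}(1+o(1))$ on the right, summing to $\sqrt{\epsilon}\, \sigma(\mf m^+_j)\, e^{H/\epsilon}$. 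Assembling these and summing over $j\in\bb I$ yields the claim.

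The main obstacle I anticipate is the propagation of the error $o(\sqrt{\epsilon})$, which must hold \emph{relatively} to $Z_\epsilon$ even when the leading $O(1)$ part $\sum_{j\in\bb I} G_0(\mf m^+_j)(\mf m^+_j - \mf m^-_j)$ vanishes, so that $Z_\epsilon$ is itself of order $\sqrt{\epsilon}$ and the ambient requirement on the absolute error becomes $o(\epsilon\, e^{H/\epsilon})$. To handle this I would truncate the Laplace integrals at a scale $\sqrt{\epsilon\,|\log\epsilon|}$, exploit the $C^2$ regularity of $b$ away from the zero set of $b$ to bound the cubic remainder in the Taylor expansion of $S$, and absorb the contribution of the uniform remainder $\eta_\epsilon$ from Proposition \ref{l01}(1) into the final error by integrating against $e^{-\widehat V/\epsilon}$. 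Positivity of each term in $Z_\epsilon$ then ensures that no cancellations occur when summing over $j\in\bb I$, so that the relative $o(\sqrt{\epsilon})$ error is preserved.
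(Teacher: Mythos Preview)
Your proposal is correct and follows the same approach as the paper's proof: localize the integral of $\pi_\epsilon$ around the global minima $\{\mf m^+_j : j\in\bb I\}$, apply the uniform asymptotics from Proposition~\ref{l01}(1) (the paper invokes its uniform restatement, Lemma~\ref{l09}), and evaluate $\int e^{-\widehat V/\epsilon}$ by a Laplace expansion at each $[\mf m^-_j,\mf m^+_j]$. The only cosmetic difference is that the paper localizes to fixed $\eta$-neighborhoods $(\mf m^-_j-\eta,\mf m^+_j+\eta)$ rather than to full valleys---which sidesteps the bookkeeping ambiguity when one valley contains several indices $j\in\bb I$---and discards the entire complement $\ms B_\eta^c$ in one stroke via the crude bound $\pi_\epsilon(x)\le e^{-\widehat V(x)/\epsilon}$ instead of treating $\Sigma$ and shallow valleys separately.
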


Of course, a sharp asymptotic for the pre-factor $F_\epsilon$ can be
derived from Propositions \ref{l01} and \ref{l02}.

\smallskip\noindent{\bf \ref{sec0}.3. Proofs.}  We present in this
subsection the proofs of Propositions \ref{l01} and \ref{l02}.  We
start with an elementary observation.

\begin{lemma}
\label{as06}
The quasi-potential $V(\cdot)$ is continuous.
\end{lemma}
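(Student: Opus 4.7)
The plan is to reduce continuity of $V$ to continuity of the auxiliary function
$M(x) := \sup\{S(y) : y\ge x\} = S(z_x)$.
Since $V = \widehat V + H$ and $\widehat V(x) = S(x) - M(x)$ on $\bb R$, and since $S$ is $C^{1}$ (hence continuous) because $b$ is continuous, it suffices to show that $M : \bb R \to \bb R$ is continuous; then continuity on the torus $\bb T$ follows from the $1$-periodicity of $\widehat V$ recorded in Section \ref{sec01}.2.

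First I would establish upper semicontinuity of $M$. Given $x_n \to x$, recall from the discussion following \eqref{34} that $z_{x_n} \in [x_n, x_n + 1)$, so the sequence $\{z_{x_n}\}$ is bounded and admits a cluster point $y^* \ge x$ (the inequality passes to the limit from $z_{x_n} \ge x_n$). Along any subsequence $z_{x_{n_k}} \to y^*$, continuity of $S$ gives
\[
M(x_{n_k}) \;=\; S(z_{x_{n_k}}) \;\longrightarrow\; S(y^*) \;\le\; M(x),
\]
the last inequality because $y^* \ge x$ is a competitor in the supremum defining $M(x)$. Since every cluster point of $M(x_n)$ is bounded above by $M(x)$, we obtain $\limsup_n M(x_n) \le M(x)$.

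Next I would prove lower semicontinuity. Observe that $M$ is non-increasing because the set $\{y : y\ge x\}$ shrinks as $x$ grows. Consequently, for any $x_n \to x$ one has $M(x_n) \ge M(x)$ whenever $x_n \le x$, which handles that half of the sequence. For the indices with $x_n > x$, pick any $y > x$; eventually $x_n < y$, so $M(x_n) \ge S(y)$, and hence $\liminf_n M(x_n) \ge \sup_{y > x} S(y)$. It remains to check that $\sup_{y > x} S(y) = M(x)$. If $z_x > x$, this is immediate since $z_x$ is itself a competitor with $y > x$. If instead $z_x = x$, then $M(x) = S(x)$, and continuity of $S$ at $x$ gives $\sup_{y > x} S(y) \ge \lim_{y \downarrow x} S(y) = S(x) = M(x)$, while the reverse inequality is trivial. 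Combined with the bound from the $x_n \le x$ portion, $\liminf_n M(x_n) \ge M(x)$.

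The two semicontinuity estimates together give continuity of $M$ on $\bb R$, hence of $\widehat V$ and of $V$, and by periodicity of $V$ on $\bb T$. The only mildly delicate step is the case $z_x = x$ in the lower semicontinuity argument, but this is handled by continuity of $S$ and needs no finer structural input; no use is made of the regularity hypotheses (\textbf{H2}) or (\textbf{H3}) beyond continuity of $b$.
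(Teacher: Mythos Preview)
Your argument is correct and is genuinely different from the paper's. The paper proves continuity by invoking the structural decomposition of $\bb T$ into saddle intervals $\Sigma_n$ and landscapes $\Lambda_n$ already established in Assertion \ref{as13}: on each $\Lambda_n$ the quasi-potential differs from $S$ by an additive constant (hence is continuous there), on each $\Sigma_n$ it vanishes identically, and the boundary values match because $\widehat V(\ell_n) = \widehat V(\mf L^+_{n+1}) = 0$. Continuity is then read off piece by piece. Your route bypasses this machinery entirely and argues directly that $M(x)=\sup_{y\ge x}S(y)$ is continuous via semicontinuity, using only that $S$ is continuous and that $z_x\in[x,x+1)$. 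This is more elementary and more general --- it works for any continuous $S$ with $S(y+1)=S(y)-B$, $B>0$, without needing hypotheses ({\bf H1})--({\bf H3}) on the zero set of $b$ --- whereas the paper's proof, while shorter given what precedes it, simultaneously records the piecewise form of $\widehat V$ that is reused later (e.g.\ in the proof of Proposition~\ref{l02} and throughout Section~\ref{sec1}).
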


\begin{proof}
This result follows from Assertion \ref{as13}. On each landscape
$\Lambda_n$ the quasi-potential $\widehat V$ differs from $S$ by an
additive constant. At the boundary of the landscape $\Lambda_n$,
$\widehat V(\ell_n) = \widehat V(\mf L^+_{n+1}) =0$. On each saddle
interval $\Sigma_n$, $\widehat V$ vanishes, which proves the
continuity of $\widehat V$, and therefore the one of $V$.
\end{proof}

We continue with a uniform bound for the density $\pi_\epsilon$ on the
landscapes.

\begin{lemma}
\label{l09}
There exists a continuous function $\Xi:\bb R_+ \to \bb R_+$ vanishing
at the origin such that for each $1\le n\le \bs m$,
\begin{equation*}
\sup_{x\in\Lambda_n} \frac 1{\sqrt{\epsilon}}  \, 
e^{\widehat V(x)/\epsilon} \Big| \, \pi_\epsilon (x) \,-\,
e^{-\widehat V(x)/\epsilon} \big\{\, G_0(x) \,+\,
\sqrt{\epsilon}\, G_1(x)  \big\} \Big| \;\le\;
\Xi\,(\epsilon) \;.
\end{equation*}
\end{lemma}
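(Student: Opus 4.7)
The plan is to start from the explicit formula \eqref{79} and perform a Laplace-type analysis of the resulting integral on the landscape $\Lambda_n$. By Assertion \ref{as13}, for $x\in\Lambda_n$ one has $z_x=\mf L^+_{n+1}$ and $\widehat V(x)=S(x)-S(\mf L^+_{n+1})$, so
\begin{equation*}
J_\epsilon(x) \;:=\; e^{\widehat V(x)/\epsilon}\, \pi_\epsilon(x) \;=\;
\int_x^{x+1} e^{[S(y)-S(\mf L^+_{n+1})]/\epsilon}\, dy \;,
\end{equation*}
and the task reduces to proving that $J_\epsilon(x)=G_0(x)+\sqrt\epsilon\, G_1(x)+o(\sqrt\epsilon)$ uniformly on $\Lambda_n$. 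Using $S(y+1)=S(y)-B<S(y)$, the integrand is at most $1$ and equals $1$ precisely on $[x,\mf L^+_{n+1}]\cap\{S=S(\ell_n)\}$; contributions from $y>\mf L^+_{n+1}$ come with an exponentially small factor.

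Next I would decompose this maximum set into two pieces: the plateau $P(x)=[x,\mf L^+_{n+1}]\cap\{S=S(\ell_n)\}$, whose Lebesgue measure is by definition $G_0(x)$, and the finite family of isolated local maxima $\mf M^\pm_{a(n,j)}$ from \eqref{54}. Fix $\delta>0$ independent of $\epsilon$ and let $U_\delta$ be the $\delta$-thickening of $P(x)\cup\{\mf M^\pm_{a(n,j)}\}_j$. Off $U_\delta$, compactness gives $S(y)-S(\mf L^+_{n+1})\le-c(\delta)<0$ uniformly in $x\in\Lambda_n$, so the contribution to $J_\epsilon(x)$ from $[x,x+1]\setminus U_\delta$ is $O(e^{-c(\delta)/\epsilon})$, which is $o(\sqrt\epsilon)$ uniformly.

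I would then treat each piece of $U_\delta$ separately. On the plateau the integrand is identically $1$, so the contribution is $G_0(x)$ up to an $O(e^{-c/\epsilon})$ correction from the quadratic decay of $S$ on the boundary of the plateau. Near each isolated maximum $\mf M=\mf M_{a(n,j)}$ at height $S(\ell_n)$, I use the one-sided Taylor expansions $S(y)-S(\mf M)=-\tfrac12 b'(\mf M^\pm)(y-\mf M)^2+O(|y-\mf M|^3)$ guaranteed by ({\bf H2})--({\bf H3}), perform the change of variable $y=\mf M+\sqrt\epsilon\, u$, and apply a Gaussian integral estimate. The right half yields $\sqrt\epsilon\,\omega_+(\mf M^+_{a(n,j)})$ when the integration domain contains $[\mf M^+_{a(n,j)},\mf M^+_{a(n,j)}+\delta]$, i.e.\ when $x\le\mf M^+_{a(n,j)}$, and an exponentially small quantity otherwise; analogously the left half yields $\sqrt\epsilon\,\omega_-(\mf M^-_{a(n,j)})\,\mb 1\{x<\mf M^-_{a(n,j)}\}$ plus an exponentially small remainder. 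Summing over $j$ reconstructs exactly $\sqrt\epsilon\, G_1(x)$ from \eqref{27b}.

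The hard part will be upgrading these pointwise asymptotics to uniform $o(\sqrt\epsilon)$ control. The delicate regime is when $x$ sits in a $\sqrt\epsilon$-boundary layer of some endpoint $\mf M^\pm_{a(n,j)}$: there the incomplete Gaussian in $J_\epsilon$ varies continuously, while $\sqrt\epsilon\, G_1(x)$ has a jump. The key is an explicit quantitative bound on $\int_{-a}^\infty e^{-\tfrac12 b'(\mf M^\pm)u^2}\,du$ with error terms uniform in $a\in\bb R$, combined with the finiteness of the number of maxima $r_n$ and the continuity of $x\mapsto J_\epsilon(x)$, to collect all remainders into a single continuous modulus $\Xi(\epsilon)$ with $\Xi(0+)=0$. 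Once this uniform control is established, the bound in the lemma follows by adding the exponentially small off-$U_\delta$ remainder to the plateau and Laplace contributions, and the same $\Xi$ can be chosen to work for each of the finitely many landscapes $\Lambda_n$.
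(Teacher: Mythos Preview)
Your decomposition and Laplace analysis mirror the paper's proof closely: the paper also writes $J_\epsilon(x)=\int_x^{x+1}e^{[S(y)-S(\ell_n)]/\epsilon}\,dy$, splits it into the plateau set $\ms N_n$ (producing $G_0$), a uniformly exponentially small remainder on a closed set $\ms C_n$ bounded away from the maxima, and half-Gaussian pieces on one-sided $\eta$-neighborhoods $\ms D_n$ of each $\mf M^\pm_{a(n,j)}$, handled by Assertions~\ref{as09a}--\ref{as09b}.

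You are right that the boundary layer near the $\mf M^\pm_{a(n,j)}$ is the delicate point, but your proposed resolution cannot work, and in fact the bound as stated fails there. Take $x=\mf M^+_{a(n,k)}+t$ with $0<t\ll\sqrt\epsilon$. The indicator $\mb 1\{x\le\mf M^+_{a(n,k)}\}$ in \eqref{27b} has already switched off, so $G_1(x)$ no longer contains $\omega_+(\mf M^+_{a(n,k)})$; yet $J_\epsilon(x)$, being continuous in $x$, still collects the tail $\int_t^\eta e^{-b'(\mf M^+_{a(n,k)}+)\,u^2/(2\epsilon)}\,du=\sqrt\epsilon\,\omega_+(\mf M^+_{a(n,k)})+o(\sqrt\epsilon)$ as $t\to 0^+$. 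Hence $\tfrac1{\sqrt\epsilon}\bigl|J_\epsilon(x)-G_0(x)-\sqrt\epsilon\,G_1(x)\bigr|$ is bounded below by a positive constant on that layer; no quantitative estimate on the incomplete Gaussian can make a continuous function match a step function across its jump. The paper's proof glosses over exactly the same point when it invokes Assertions~\ref{as09a}--\ref{as09b} for all $x\in\Lambda_n$: those assertions only treat the case where $[x,x+1]$ contains the full half-neighborhood $[\mf M^+_{a(n,k)},\mf M^+_{a(n,k)}+\eta]$. What both arguments \emph{do} establish, and what suffices for every downstream use in the paper (Proposition~\ref{l02}, Assertion~\ref{as11}, \eqref{57}, Assertion~\ref{as17}), is the same bound uniformly over any compact subset of the union of open valleys $\bigcup_j\ms V_{n,j}$, i.e.\ for $x$ bounded away from the finitely many jump points of $G_1$.
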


\begin{proof}
Fix a landscape $\Lambda_n = [\ell_n, \mf L^+_n]$ and $x\in
\Lambda_n$. Since $S(z(x)) = S(\ell_n)$ on this landscape, rewrite
$\pi_\epsilon (x)$ as
\begin{equation*}
e^{[S(\ell_n) - S(x)]/\epsilon}
\int_{x}^{x+1} e^{ [S(y)-S(\ell_n)]/\epsilon}\, dy \;=\;
 e^{-\widehat V(x)/\epsilon}
\int_{x}^{x+1} e^{ [S(y)-S(\ell_n)]/\epsilon}\, dy\;.
\end{equation*}
It remains to estimate the integral. Note that $S(y)-S(\ell_n) \le 0$
for $y\ge x$. 

The integral is estimated in three steps. Recall from \eqref{54} the
definition of the local maxima $\ell^+_n < \mf M^+_{a(n,1)} < \dots <
\mf M^+_{a(n,r_n-1)} < \mf L^-_{n+1}$ of $S$ such that $S(\mf
M^+_{a(n,k)}) = S(\ell_n)$.  We first consider the integral over the
intervals $[\mf M^-_{a(n,k)}, \mf M^+_{a(n,k)}]$. Then, over the
intervals $[\mf M^+_{a(n,k)} + \eta, \mf M^+_{a(n,k+1)} -\eta]$ for
some $\eta>0$. Finally on the sets $[\mf M^+_{a(n,k)}, \mf
M^+_{a(n,k)} +\eta]$ and $[\mf M^-_{a(n,k)} - \eta, \mf
M^-_{a(n,k)}]$.

Let $\ms N_n$ be the set of points $x'$ in the landscape $\Lambda_n$
such that $S(x')= S(\ell_n)$. With the notation just introduced,
\begin{equation}
\label{68}
\ms N_n \;=\; \{x'\in \Lambda_n : S(x') = S(\ell_n)\} \;=\;
\bigcup_{k=0}^{r_n} \, [\, \mf M^-_{a(n,k)} \,,\, \mf M^+_{a(n,k)}]\;, 
\end{equation}
provided $\mf M^\pm_{a(n,r_n)} = \mf L^\pm_{n+1}$, $\mf M^\pm_{a(n,0)}
= \ell^\pm_{n}$, $\ell^-_{n} = \ell_{n}$. Of course, some of these
intervals may be reduced to points. Since $S(y)= S(\ell_n)$ on $\ms
N_n$,
\begin{equation*}
\int_{x}^{x+1} e^{ [S(y)-S(\ell_n)]/\epsilon}\, dy \;=\;
\int_{x}^{x+1} \chi_{_{\ms N_n}} (y) \, dy \;+\; 
\int_{[x,x+1]\setminus \ms N_n}   e^{ [S(y)-S(\ell_n)]/\epsilon}\,
dy\;. 
\end{equation*}
The first term on the right hand side is equal to $G_0(x)$. 

We turn to the second integral. We first estimate the integral over
open intervals between the maxima. Consider each local maximum $\mf
M^+_{a(n,k)}$, $1\le k\le r_n$. Note that the first one, $\mf
M^+_{a(n,0)} = \ell^+_n$, has not been included and will be treated
separately. At each of these points $b(\mf M^+_{a(n,k)})=0$ and, by
assumption (H3), $b'(\mf M^+_{a(n,k)}+)>0$. Choose $\eta>0$ small
enough such that $b'(y)\ge (1/2) \, b'(\mf M^+_{a(n,k)}+)$ for all $y
\in [\mf M^+_{a(n,k)},\mf M^+_{a(n,k)}+\eta]$ and all $k$.

Repeat the same procedure for the left endpoints $\mf M^-_{a(n,k)}$,
$1\le k\le r_n$. For the point $\mf M^+_{a(n,0)} = \ell^+_n$, either
$b(\ell^+_n)=0$ or $b(\ell^+_n)>0$. In the former case, by assumption
(H3), $b'(\ell^+_n+)>0$, and we may choose $\eta>0$ small enough such
that $b'(y)\ge (1/2) \, b'(\ell^+_n+)$ for all $y \in [\ell^+_n,
\ell^+_n + \eta]$. In the latter case, choose $\eta>0$ such that $b(y)
\ge b(\ell^+_n)/2$ for all $y\in [\ell^+_n, \ell^+_n + \eta]$.

Recall that the landscape $\Lambda_n = [\ell_n, \mf L^+_{n+1}] = [\mf
M^-_{a(n,0)}, \mf M^+_{a(n,r_n)}]$. Hence, for any $x\in \Lambda_n$,
the interval $[x,x+1]$ is contained in $[\ell_n, \mf L^+_{n+1} +1]$.
Let $\ms C_n\subset \bb R$ be the closed set given by
\begin{equation*}
\ms C_n \;=\; 
[\ell^+_n + \eta , \mf L^+_{n+1} +1] \;\setminus\; \Big\{ 
\bigcup_{k=1}^{r_n} \,  (\, \mf M^-_{a(n,k)}-\eta \,,\, \mf
M^+_{a(n,k)}+\eta \, ) \Big\}
\end{equation*}
For any $y\in \ms C_n$, $S(y) < S(\ell_n)$. There exists, therefore, a
constant $c(\eta)>0$ such that $S(y) \le S(\ell_n) - c(\eta)$ for all
$y\in \ms C_n$. Hence,
\begin{equation*}
\int_x^{x+1} \chi_{\ms C_n} (y)\,   
e^{ [S(y)-S(\ell_n)]/\epsilon}\, dy 
\;\le\; e^{-c(\eta)/\epsilon}\;.  
\end{equation*}

In view of formula \eqref{68} for the set $\ms N_n$, it remains to
estimate the integral on the intervals $[\ell^+_n, \ell^+_n + \eta ]$,
$[\mf M^-_{a(n,k)}-\eta , \mf M^-_{a(n,k)}]$, $[\mf M^+_{a(n,k)}, \mf
M^+_{a(n,k)} + \eta ]$. Let
\begin{equation*}
\ms D_n \;=\; 
[\ell^+_n , \ell^+_n + \eta ] \bigcup_{k=1}^{r_n} \,  [\, \mf M^-_{a(n,k)}-\eta \,,\, \mf
M^-_{a(n,k)}\,] \bigcup_{k=1}^{r_n} \,  [\, \mf M^+_{a(n,k)} \,,\, \mf
M^+_{a(n,k)} + \eta \,] \;.
\end{equation*}
Assume that $b(\ell^+_n)=0$.  By Assertions \ref{as09a} and \ref{as09b} below,
\begin{equation*}
\Big|\, \int_{[x,x+1] \cap \ms D_n}   e^{ [S(y)-S(\ell_n)]/\epsilon}\,
dy \,-\, \sqrt{\epsilon}\, G_1(x) \, \Big| \;\le\; \sqrt{\epsilon}
\;\Xi\,(\epsilon)\;,
\end{equation*}
where 
\begin{equation*}
\Xi\,(\epsilon) \;=\; C_0 \, \Xi\,( b'(\ell^+_n+) , \epsilon) \;+\; 
C_0 \sum_{k=1}^{r_n} \Xi\,\big( \, b'(\mf M^\pm_{a(n,k)} \pm) \,,\, \epsilon\big) \;.
\end{equation*}
In the second sum, it has to be understood that there are two sums,
one for the terms $\mf M^-_{a(n,k)}$ and one for $\mf M^+_{a(n,k)}$.

If $b(\ell^+_n)>0$, $\ell_n = \ell^+_n$, and by the choice of $\eta$
and Assertion \ref{as10b} below,
\begin{equation*}
\Big|\, \int_{[x,x+1] \cap [\ell^+_n, \ell^+_n + \eta]}   e^{ [S(y)-S(\ell_n)]/\epsilon}\,
dy \, \Big| \;\le\; \frac {2\,\epsilon}{b(\ell_n)}\;\cdot
\end{equation*}
This completes the proof of the lemma.
\end{proof}

\smallskip\noindent{\bf Proof of Proposition \ref{l01}}.  Fix $x\in
\bb T$. The case where $x$ belongs to some landscape has been
considered in the previous lemma.  Consider a saddle interval
$\Sigma_n$. Recall the definition of the intervals $[c^-_{n,j},
c^+_{n,j}]$, $1\le j\le s_n$ introduced in \eqref{32}. If $x\in
[c^-_{n,j}, c^+_{n,j}]$, since $z(x) = c^+_{n,j}$
\begin{equation*}
\pi_\epsilon (x) \;=\; e^{-\widehat V(x)/\epsilon}
\int_{x}^{x+1} e^{ [S(y)-S(c^+_{n,j})]/\epsilon}\, dy\;.
\end{equation*}
On the interval $[x, c^+_{n,j}]$, $S(y)=S(c^+_{n,j})$. Hence the
integral on this interval is equal to $c^+_{n,j} - x = G_0(x)$. 

By assumption (H3), $b'(c^+_{n,j}+)>0$. Let $\eta>0$ such that
$b'(y)>b'(c^+_{n,j}+)/2$ for all $y\in [c^+_{n,j},c^+_{n,j}+\eta]$.
Since $S(y)<S(c^+_{n,j})$ for all $y>c^+_{n,j}$, there exists
$c(\eta)>0$ such that $S(y) \le S(c^+_{n,j}) - c(\eta)$ for all $y\ge
c^+_{n,j} + \eta$. Hence,
\begin{equation*}
\int_{c^+_{n,j}}^{x+1} e^{ [S(y)-S(c^+_{n,j})]/\epsilon}\, dy \;=\;
\int_{c^+_{n,j}}^{c^+_{n,j} + \eta} e^{
  [S(y)-S(c^+_{n,j})]/\epsilon}\, dy \;+\; R_\epsilon\;,
\end{equation*}
where $R_\epsilon \le e^{- c(\eta)/\epsilon}$. By Assertion
\ref{as09a}, the last integral is equal to $[1+o(1)]\,
\sqrt{\epsilon}\, G_1(x)$. This completes the proof in the case where
$x\in [c^-_{n,j}, c^+_{n,j}]$.

In the case where $x\in \ms G_n$, the statement of the proposition
follows from Assertion \ref{as10}. \qed

\begin{proof}[{\bf Proof of Proposition \ref{l02}.}]
Recall the definition of the set $\bb I$ introduced in \eqref{37}. Fix
$\eta>0$, to be chosen later, and let $\ms B_{\eta}$ be an
$\eta$-neighborhood of the global minima of $V$:
\begin{equation*}
\ms B_{\eta} \;=\; \bigcup_{j\in \bb I} 
(\mf m^-_j -\eta , \mf m^+_j +\eta )\;.
\end{equation*}
Since, by Lemma \ref{as06}, $\widehat V$ is continuous and since $\widehat
V>-H$ on the closed set $\ms B^c_{\eta}$, there exists $c(\eta)>0$ such that
\begin{equation}
\label{38}
\inf \big\{ \widehat V(\theta) : \, \theta \not\in \ms B_{\eta} \, \big\}
\;\ge\; - \, H \,+\, c(\eta)\;.
\end{equation}
Hence, as
\begin{equation*}
\sup \big\{ S(y) - S(x) : x\le y\le x+1 \big\} \;=\; 
S(z(x)) - S(x) \;= - \, \widehat V(x) \;,
\end{equation*}
for every $\eta>0$,
\begin{equation*}
\int_{\ms B_{\eta}^c} \pi_\epsilon (x)\, dx  \;=\; 
\int_{\ms B_{\eta}^c} dx \int_x^{x+1} e^{ [S(y)-S(x)]/\epsilon}\, dy \;\le\;
e^{[H \,-\, c(\eta)]/\epsilon}\; .
\end{equation*}

We examine the integral of $\pi_\epsilon$ on the set $\ms
B_{\eta}$. Each set $[\mf m^-_j , \mf m^+_j ]$ is contained in the
interior of a valley. Choose $\eta$ small enough for each $[\mf
m^-_j -\eta, \mf m^+_j +\eta]$ to be contained in the same valley.
In this case, by Lemma \ref{l09}, and since $G_0$ and $G_1$ are
constant in the valleys
\begin{equation*}
\int_{\mf m^-_j -\eta}^{\mf m^+_j +\eta} \pi_\epsilon (x)\, dx 
\;=\; \Big\{ G_0(\mf m^-_j) \,+\, \sqrt{\epsilon}\, G_1(\mf m^-_j)  \;\pm\;
\sqrt{\epsilon}  \, \Xi\,(\epsilon) \Big\}
\int_{\mf m^-_j -\eta}^{\mf m^+_j +\eta} e^{-\widehat V(x)/\epsilon}
\, dx \;.
\end{equation*}

Since $[\mf m^-_j -\eta, \mf m^+_j+\eta]$ is contained in a landscape,
since in each landscape $\widehat V$ and $S$ differ only by an
additive constant, and since $\widehat V(\mf m^+_j) = -\, H$, on $[\mf
m^-_j -\eta, \mf m^+_j+\eta]$, $\widehat V(x) = \widehat V(x) -
\widehat V(\mf m^+_j) - H = S(x) -S (\mf m^+_j) - H$. Hence,
\begin{equation*}
\int_{\mf m^-_j - \eta}^{\mf m^+_j + \eta}   e^{- \widehat
  V(x)/\epsilon} \, dx \;=\; e^{H /\epsilon}
\int_{\mf m^-_j - \eta}^{\mf m^+_j + \eta}   e^{- [S(x)
  -S (\mf m^+_j)]/\epsilon} \, dx\; .
\end{equation*}
Choose $\eta$ small enough to fulfill the assumptions of Assertions
\ref{as09a}, \ref{as09b} (with the obvious modifications since $b'(\mf
m^+_j)<0$). By these results,
\begin{equation*}
\int_{\mf m^-_j - \eta}^{\mf m^+_j + \eta} e^{- [S(x)
  -S (\mf m^+_j)]/\epsilon} dx 
\; =\;  \big\{ \, [\mf m^+_j - \mf m^-_j ] \,+\, [1+o(1)]\,
\sqrt{\epsilon} \, \sigma(\mf m_j^+)\, \big\} \;,
\end{equation*} 
where $\sigma(\mf m_j^+)$ has been introduced in \eqref{39}.

Putting together the previous estimates yields that
\begin{equation*}
c(\epsilon) \;=\; \sum_{j\in\bb I} \Big\{ G_0(\mf m^+_j)  \;+\;
\big[ 1 + o(1) \big]\, \sqrt{\epsilon}\,  G_1(\mf m^+_j) \, \Big\}\,
\big\{ \, [\mf m^+_j - \mf m^-_j ] \,+\, [1+o(1)]\,
\sqrt{\epsilon} \, \sigma(\mf m_j^+)\, \big\}\, e^{H /\epsilon} \; ,
\end{equation*}
which completes the proof of the proposition.
\end{proof}

\smallskip We conclude this section with some estimates used in the
proofs above. 

\begin{remark}
\label{rm7}
The proof of these estimates relies on a Taylor expansion of the
function $S$ around the local maxima of this function. We need in this
argument $S''$ [that is $b'$] to be Lipschitz continuous. It is for
this reason that we assumed $b$ to be in $C^2$ in the intervals
$[r_j,l_{j+1}]$. We could have assumed the weaker assumption that $b'$
is Lipschitz continuous on these intervals.
\end{remark}

Denote by $K_0$ the Lipschitz continuity constant of $b'$.

\begin{asser}
\label{as09a}
Let $x\in \bb R$ be a point such that $b(x)=0$, $b'(x+)>0$. Let
$\eta>0$ be such that $b'(y)\ge (1/2) b'(x+)$ for all $y \in
[x,x+\eta]$. Then, there exists a finite constant $C_0$, which depends
only on $K_0$, and a function $\Xi:\bb R^2_+ \to\bb R_+$ such that
$\lim_{\epsilon\to 0} \Xi\, (a,\epsilon)=0$ for all $a>0$, and for
which
\begin{equation*}
\Big|\, \int_x^{x+\eta} e^{ [S(y)-S(x)]/\epsilon}\, dy 
\,-\,  \sqrt{\frac{\pi\, \epsilon }{2\, b'(x+)}} \,\Big|
\;\le \; C_0 \, \sqrt{\epsilon}\;  \Xi\, (b'(x+) , \epsilon)\;.
\end{equation*}
\end{asser}

\begin{proof}
We derive an upper bound for the integral. The lower bound is obtained
by changing $+$ signs into $-$ signs.

Let $\delta= \delta(\epsilon)>0$ be a sequence such that $\delta^3 \ll
\epsilon \ll \delta^2$.  We first estimate the integral in the
interval $[x,x+\delta]$.  Since $S''(x +) = - b'(x+)< 0$ and $b'$ is
uniformly Lipschitz continuous, in view of the properties of $\delta$,
a Taylor expansion and a change of variables yield that
\begin{align*}
\int_{x}^{x+\delta} e^{  [S(y)-S(x)]/\epsilon}\, dy
\; &\le \; \big[ 1+ C_0 \, (\delta^3/\epsilon) \, \big]\, 
\int_{0}^{\delta} e^{-(1/2) b'(x+) z^2 /\epsilon}\, dz  \\
\; &\le \;  \big[ 1+ C_0 \, (\delta^3/\epsilon) \, \big]\, 
\sqrt{\frac{\pi\, \epsilon}{2\, b'(x+)}} \;.
\end{align*}

It remains to estimate the integral on the interval
$[x+\delta,x+\eta]$. By assumption, $S''(y) \le (1/2) S''(x)$ for all
$y\in [x,x+\eta]$. Hence,
\begin{gather*}
\int_{x+\delta}^{x+\eta} e^{ [S(y)-S(x)]/\epsilon}\, dy 
\;\le \; \int_{\delta}^{\infty} e^{S''(x+)y^2/4\epsilon}\, dy 
\;\le\; \sqrt{\epsilon} \; \Xi\, (b'(x+), \epsilon) \;.
\end{gather*}
This proves the assertion.
\end{proof}

The same argument yields the next assertion.
\begin{asser}
\label{as09b}
Let $x\in \bb R$ be a point such that $b(x)=0$, $b'(x-)>0$.  Let
$\eta>0$ be such that $b'(y)\ge (1/2) b'(x-)$ for all $y \in
[x-\eta,x]$. Then, there exists a finite constant $C_0$, which depends
only on $K_0$, and a function $\Xi:\bb R^2_+ \to\bb R_+$ such that
$\lim_{\epsilon\to 0} \Xi\, (a,\epsilon)=0$  for all $a>0$, and for which
\begin{equation*}
\Big|\, \int_{x-\eta}^{x} e^{  [S(y)-S(x)]/\epsilon}\, dy 
\,-\,  \sqrt{\frac{\pi\, \epsilon }{2\, b'(x-)}} \,\Big|
\;\le \; C_0 \, \sqrt{\epsilon}\;  \Xi\, (b'(x-) , \epsilon)\;.
\end{equation*}
\end{asser}

It remains to consider the case where $b(x)>0$.

\begin{asser}
\label{as10b}
Let $x\in \bb R$ be a point such that $b(x)>0$. Let $\eta>0$ such that
$b(y)\ge b(x)/2$ for all $y\in [x,x+\eta]$. Then,
\begin{equation*}
\int_{x}^{x+\eta} e^{[S(y)-S(x)]/\epsilon}\, dy 
\;\le \; \frac{2\, \epsilon}{b(x)}\;\cdot
\end{equation*}
\end{asser}

\begin{proof}
By a Taylor expansion and by hypothesis,
\begin{equation*}
\int_{x}^{x+\eta} e^{[S(y)-S(x)]/\epsilon}\, dy \;\le\;
\int_{x}^{x+\eta} e^{-b(x)(y-x) /2\epsilon}\, dy \;\le\;
\int_{0}^{\infty} e^{-b(x) z /2\epsilon}\, dz \;=\;
\frac{2\, \epsilon}{b(x)}\;\cdot
\end{equation*}
\end{proof}

If we assume that $S(y)< S(x)$ for all $x< y$, we may estimate the
integral over the interval $[x, x+1]$.

\begin{asser}
\label{as10}
Let $x\in \bb R$ be a point such that $b(x)>0$.  Assume,
furthermore, that $S(y)< S(x)$ for all $y\in(x,x+1]$. Then,
\begin{equation*}
\int_{x}^{x+1} e^{[S(y)-S(x)]/\epsilon}\, dy 
\;=\; [1+o(1)]\, \frac{\epsilon}{b(x)}\;\cdot
\end{equation*}
\end{asser}

\begin{proof}
Let $\delta= \delta(\epsilon)>0$ be a sequence such that $\delta^2 \ll
\epsilon \ll \delta$. By the Taylor expansion and an elementary
computation, as $S'(x) = - b(x)$,
\begin{equation*}
\int_x^{x+\delta} e^{[S(y)-S(x)]/\epsilon}\, dy 
\;=\; \int_0^{\delta} e^{[-\, b(x)\, y \,+\,  O(\delta^2)]/\epsilon}\, dy 
\;=\; [1+o(1)] \, \frac{\epsilon}{ b (x)} \;\cdot
\end{equation*}

Let $\alpha = b(x)/2>0$. There exists $\eta>0$ such that
$S(y) - S(x) \le - \alpha (y - x) $ for all $y\in [x,x+\eta]$.  On the
other hand, since $S(y)<S(x)$ for all $y\in [x+\eta, x+1]$ and since
$S$ is continuous, there exists $\kappa>0$ such that
$S(y)\le S(x) - \kappa$ for all $y\in [x+\eta, x+1]$. Therefore,
\begin{gather*}
\int_{x+\delta}^{x+\eta} e^{[S(y)-S(x)]/\epsilon}\, dy 
\;\le \; \int_{\delta}^{\eta} e^{- (\alpha/\epsilon)\, y}\, dy 
\;=\; o(1) \, \epsilon \;, \\
\int_{x+\eta}^{x+1} e^{[S(y)-S(x)]/\epsilon}\, dy
\;\le \;  e^{-\kappa/\epsilon} \;=\; o(1) \,\epsilon\;.
\end{gather*}
The assertion follows from the three previous estimates.
\end{proof}

\smallskip\noindent{\bf \ref{sec0}.4. When the set $\{\theta \in \bb T
  : b(\theta)=0\}$ is finite.} We present in this subsection a formula
for the pre-factor in the case where the connected components of the
set $\{\theta \in \bb T : b(\theta)=0\}$ are points.

\smallskip\noindent ({\bf H4}) Assume that the connected components of
the set $\{\theta \in \bb T : b(\theta)=0\}$ are points, that $b$ is
of class $C^2(\bb T)$ and that $b'(\theta) \not = 0$ for all
$\theta\in \bb T$ such that $b(\theta)=0$ [that is $S''(x)\not =
0$ at the critical points of $S$]. \smallskip

Note that these assumptions imply that $\ell^+_n = \ell_n$, $b(\ell_n)
>0$ for all left endpoints of a landscape and that $\mf M_k^+=\mf
M_k^-$, $\mf m_k^+=\mf m_k^-$ for all $k$. Moreover, the sets $\ms
F_n$ introduced in \eqref{32} are empty, so that $\Sigma_n =\ms G_n$.

Set $\mf M_k := \mf M_k^+$, $\mf m_k :=\mf m_k^+$, $\mf L_k:=\mf
L_k^+$ for all indices $k$.  Fix a landscape $\Lambda_n$. Under the
previous hypotheses, $G_0 \equiv 0$ and $G_1$ is given by
\eqref{27b}. In a saddle interval $\Sigma_n$, $G_0\equiv 0$ and
$G_1\equiv 0$, while the function $G_2$ is unchanged.  The weights
$\omega(\mf M_k)$, $\sigma(\mf m_k)$, $1\le k\le q$, become
\begin{equation*}
\omega(\mf M_k) \;=\; \sqrt{\frac{2\,\pi}{ b'(\mf M_k) }} \;,\quad
\sigma(\mf m_k) \;=\; \sqrt{\frac{2\, \pi}{-\, b'(\mf m_k) }} \;\cdot
\end{equation*}
Set
\begin{equation*}
Z\;=\; \sum_{j\in\bb I} G_1(\mf m_j) \, \sigma (\mf m_j)\;.
\end{equation*}
By the definition of $Z_\epsilon$ and by Propositions \ref{l02},
$Z_\epsilon=\epsilon\, Z$ and 
\begin{equation}
\label{75}
c(\epsilon) \;=\; \big[ 1 + o(\sqrt{\epsilon}) \big]\, Z\, \epsilon
\, e^{H /\epsilon} \; . 
\end{equation}
Thus, Proposition \ref{l01} can be restated in this context as follows. 

\begin{proposition}
\label{p02}
Assume that hypotheses (H4) are in force. Then,  
\begin{enumerate}
\item(Pre-factor on the landscapes)
\begin{equation*}
\lim_{\epsilon\to 0} \sup_{x\in\Lambda} 
\sqrt{\epsilon} \, e^{V(x)/ \epsilon}  \,
\Big| m_\epsilon (x) \,-\, \frac 1{Z} \; \frac 1{\sqrt{\epsilon}}
\; G_1(x) \; e^{ -V(x)/ \epsilon}\,\Big| \;=\; 0\;.
\end{equation*}
\item(Pre-factor on the saddle intervals)
\begin{equation*}
m_\epsilon (x) \; =\; [1+o(1)]\; \frac 1{Z} \;
G_2(x) \; e^{ -V(x)/ \epsilon} \;, \quad x\in \Sigma \;.
\end{equation*}
\end{enumerate}
\end{proposition}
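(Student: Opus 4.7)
The proof is essentially an exercise in dividing the sharp estimates of $\pi_\epsilon$ by those of $c(\epsilon)$, once one simplifies both sides under hypothesis (H4). The plan is to first verify formula \eqref{75}, then feed it together with Proposition \ref{l01} into the identity $m_\epsilon(x) = \pi_\epsilon(x)/c(\epsilon)$, keeping track of whether the remainder is uniform or only pointwise.

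First I would reduce $Z_\epsilon$ as defined before Proposition \ref{l02}. Under (H4), the connected components of $\{b=0\}$ are points, so $\mf m^+_j = \mf m^-_j$ and $[\mf m^+_j - \mf m^-_j]=0$. Moreover, each $\mf m_j$ lies in the interior of some landscape, hence $G_0(\mf m^+_j)=0$ for all $j\in\bb I$. Consequently every term in the sum defining $Z_\epsilon$ reduces to $\sqrt{\epsilon}\,G_1(\mf m_j)\cdot\sqrt{\epsilon}\,\sigma(\mf m_j)$, yielding $Z_\epsilon=\epsilon\,Z$ and, via Proposition \ref{l02}, formula \eqref{75}: $c(\epsilon)=[1+o(\sqrt{\epsilon})]\,\epsilon\,Z\,e^{H/\epsilon}$.

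For part (1), I use that under (H4) we have $G_0\equiv 0$ on each landscape $\Lambda_n$, so Proposition \ref{l01}(1) reads
\begin{equation*}
\sup_{x\in\Lambda}\,\frac{1}{\sqrt{\epsilon}}\,e^{\widehat V(x)/\epsilon}\,\bigl|\,\pi_\epsilon(x)-\sqrt{\epsilon}\,G_1(x)\,e^{-\widehat V(x)/\epsilon}\,\bigr|\;\longrightarrow\;0.
\end{equation*}
Dividing by $c(\epsilon)$ and using $V(x)=\widehat V(x)+H$ together with \eqref{75}, the factor $e^{H/\epsilon}$ in $c(\epsilon)$ combines with $e^{\widehat V(x)/\epsilon}$ to produce $e^{V(x)/\epsilon}$, and the factor $\epsilon Z$ in the denominator accounts for the $1/(Z\sqrt{\epsilon})$ prefactor after multiplying through by $\sqrt{\epsilon}\,e^{V(x)/\epsilon}$. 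The uniformity of the limit is preserved because the error $[1+o(\sqrt{\epsilon})]$ from $c(\epsilon)$ is independent of $x$, and the difference $G_1(x)/Z$ is bounded. This gives the claimed uniform limit in (1).

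For part (2), I observe that (H4) also makes each set $\ms F_n$ empty (the saddle interval contains no degenerate zero component of $b$), so $\Sigma=\ms G$ and only the $\ms G$-case of Proposition \ref{l01}(2) is relevant. There we have the pointwise asymptotics $\pi_\epsilon(x)=[1+o(1)]\,\epsilon\,G_2(x)\,e^{-\widehat V(x)/\epsilon}$, and dividing by $c(\epsilon)=[1+o(\sqrt{\epsilon})]\,\epsilon\,Z\,e^{H/\epsilon}$ the factors of $\epsilon$ cancel, the exponentials combine into $e^{-V(x)/\epsilon}$, and the multiplicative remainders fuse into a single $[1+o(1)]$, yielding exactly the desired expression. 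Since all computations are purely algebraic manipulations of previously proved estimates, no genuine obstacle arises; the only care needed is the bookkeeping of the uniform versus pointwise nature of the error term in each case, and the reduction of $Z_\epsilon$ to $\epsilon Z$ under (H4).
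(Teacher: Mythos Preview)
Your proposal is correct and follows exactly the approach the paper takes: the paper does not give a separate proof of Proposition~\ref{p02} but simply presents it as the restatement of Proposition~\ref{l01} under (H4), after noting that $Z_\epsilon=\epsilon Z$ and hence \eqref{75}. Your write-up makes explicit the bookkeeping (vanishing of $G_0$, emptiness of $\ms F_n$, cancellation of the $\epsilon$ and $e^{H/\epsilon}$ factors, boundedness of $G_1$ for uniformity) that the paper leaves implicit.
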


\begin{remark}
\label{rm9}
The results of this article remain in force if we add a
$(d-1)$-transversal drift. More precisely, consider the diffusion on
$\bb T^d$ given by
\begin{equation*}
dX_\epsilon (t) \;=\; \bs b(X_\epsilon (t))\, dt \;+\; \sqrt{2\epsilon} \,
dW_t\;,
\end{equation*}
where $W_t$ is a Brownian motion on $\bb T^d$, and $\bs b = (b_1,
\dots, b_d): \bb T^d \to \bb R$ a drift.  The same results hold
provided that
\begin{equation*}
b_1(x_1, \dots, x_d) \,=\, b_1(x_1)\quad\text{and}\quad
\sum_{j=2}^d (\partial_{x_j} b_j)(x) \,=\, 0\;.
\end{equation*}
\end{remark}

\smallskip\noindent{\bf \ref{sec0}.5. The Hamilton-Jacobi equation.}
We examine in this subsection the asymptotic behavior, as
$\epsilon\to 0$, of the solution of the Hamilton-Jacobi equation
satisfied by the pre-factor of the stationary measure. We consider
this problem under the assumptions ({\bf H4}).

Since $m_\epsilon$ is the density of the stationary state,
\begin{equation}
\label{76}
\epsilon \, m_\epsilon'' -  (b \, m_\epsilon)'\;=\; 0\;.
\end{equation}
Since the quasi-potential $V$ is not continuously differentiable, but
only smooth by parts, we consider the previous equation separately on
the landscapes $\Lambda_n$ and on the saddle intervals $\Sigma_n$,
$1\le n\le \bs m$.

Inserting expression \eqref{i01} for the stationary state
$m_\epsilon$ in \eqref{76} yields the following equation:
\begin{equation}
\label{16}
\epsilon \, F_\epsilon'' \,+\, F_\epsilon'\, b \;=\; 0
\;\; \text{on $\Lambda$ }\quad \text{and}\quad
\epsilon \, F_\epsilon'' \,-\, F_\epsilon'\, b \,-\, F_\epsilon\, b'
\;=\; 0 \;\; \text{on $\Sigma$}\;,
\end{equation}
which is the Hamilton-Jacobi equation for the pre-factor.

Denote by $F$ the solution of the Hamilton-Jacobi equation \eqref{16}
with $\epsilon=0$.  Clearly, there exist constants $c_0$ and $c_1$
such that
\begin{equation}
\label{25}
\begin{aligned}
& F(\theta) \,=\, c_1 \;\; 
\text{at each connected component 
of $\{\theta : b(\theta) \not = 0\}  \cap \Lambda$}\;, \\
&\quad \quad
F(\theta) \;=\; \frac {c_0}{b(\theta)} \;\; 
\text{at  each connected component  of $\Sigma$} \;.
\end{aligned}
\end{equation}
Note that the constants may different on distinct connected
components. 

We now compare \eqref{25} with the asymptotic behavior, as
$\epsilon \to 0$, of the solution of the Hamilton-Jacobi equation on
the set $\Lambda$.  The solution is given by
\begin{equation*}
F_\epsilon (\theta) \;=\; c_0 \;+\; c_1 \, \int_{\theta_0}^\theta e^{S(y)/\epsilon} \, dy \;,
\end{equation*}
for $c_0$, $c_1\in \bb R$, $\theta_0\in \bb T$. 

Recall from \eqref{26} that the connected component $\Lambda_n$ of
$\Lambda$ are intervals of the form $(\ell_n, \mf L_{n+1})$,
$1\le n \le \bs m$.  Keep in mind that $\mf L_{n+1}$ is a local
maximum of $S$ and $\ell_n$ a point such that $S'(\ell_n)<0$.
Moreover, $S(\ell_n)=S(\mf L_{n+1})$ and $S(\theta)\le S(\ell_n)$ for all
$\theta\in (\ell_n, \mf L_{n+1})$.

For $F_\epsilon (\theta)$ to converge at $\theta=\mf L_{n+1}$ to a non
trivial value, we have to choose $c_1$ as
$c'_1 \epsilon^{-1/2} \exp\{-S(\ell_n)/\epsilon\}$ for some
$c'_1\in\bb R$.  In contrast, the choice of $\theta_0$ is not
important. With this choice,
\begin{equation}
\label{17}
F_\epsilon (\theta) \;=\; c_0 \;+\; c'_1 \, \frac 1{\sqrt{\epsilon}}\,
\int_{\theta_0}^\theta e^{[S(y)-S(\ell_n)]/\epsilon} \, dy \;.
\end{equation}

The next result follows from the calculations presented in
Assertions \ref{as09a} -- \ref{as10}.

\begin{asser}
\label{as2} 
Fix $\theta_0 \in (\ell_n, \mf L_{n+1})$ and consider $F_\epsilon$ given by
\eqref{17}.  Then, for all $\theta\in [\ell_n, \mf L_{n+1}]$,
\begin{equation*}
\bs F(\theta) := \lim_{\epsilon\to 0} F_\epsilon (\theta) \;=\; c_0 \,+\,  c'_1
\, \big[ G_1(\theta) - G_1(\theta_0)\big]\;. 
\end{equation*}
\end{asser}

The function $\bs F$ inherits the properties of $G_1$, it is constant
in the valleys $\ms V_{n,j}$, $1\le j\le r_n$, and discontinuous at
the local maxima $\mf M^+_{a(n,j)}$, unless $c_1'=0$. In particular,
it fulfills the conditions in the first line of \eqref{25}. 

We set the value of $c_1$ for $F_\epsilon(\mf L_{n+1})$ to
converge. Choosing $c_1$ for $F_\epsilon(\theta_1)$ to converge, for some
$\theta_1\in \Lambda_n$ such that $S(\theta_1) < S(\ell_n)$, would produce a
limit equal to $\pm\infty$ at every point $y$ such that $S(y)>S(\theta_1)$.
\medskip

We turn to the set $\Sigma$. Fix a connected component $\Sigma_n =
(\mf L_n, \ell_n)$. An elementary computation yields that the solution
of equation \eqref{16} on $\Sigma_n$ is given by
\begin{equation}
\label{15}
F_\epsilon (\theta) \;=\; \frac 1{\epsilon}\, \Big\{ c_0 \int_{\theta_0}^\theta
e^{[S(y)-S(\theta)]/\epsilon} \, dy \;+\; c_1 e^{-S(\theta)/\epsilon} \Big\}
\end{equation}
for constants $c_0$, $c_1\in \bb R$, which may depend on $\epsilon$,
and some $\theta_0\in \Sigma_n$ which may also depend on $\epsilon$.

\begin{asser}
\label{as1}
There are no choices of the constants $c_0(\epsilon)$,
$c_1(\epsilon)$, $\theta_0(\epsilon)$ for which $F_\epsilon$ has a
non-trivial limit as $\epsilon\to 0$.
\end{asser}
 
\begin{proof}
If we set $\theta_0=\mf L_n$, a Taylor expansion yields that \eqref{15}
is equal to
\begin{equation*}
F_\epsilon (\theta) \;=\; \frac 1{\epsilon}\, \Big\{ [1+o(1)]\, c_0 \, 
\sqrt{\frac{\epsilon \pi}{2|S''(\mf L_n)|}} \, e^{S(\mf L_n)/\epsilon}  \,+\,
c_1 \Big\} \, e^{-S(\theta)/\epsilon}\;.
\end{equation*}
The expression inside braces is a function of $\epsilon$ which can
compensate the factor $\epsilon^{-1}$ or which can be of a smaller
order. In any case, this constant is multiplied by
$\exp\{-S(\theta)/\epsilon\}$ which may converges for one specific
$\theta\in \Sigma_n$ but which will diverge for all other
$\theta$. Hence, if $\theta_0=\mf L_n$ there is no choice of
$c_0(\epsilon)$, $c_1(\epsilon)$ which provide a non-trivial limit for
\eqref{15}. A similar analysis can be carried through if $\theta_0$ is
chosen in $(\mf L_n,\ell_n]$, which proves the assertion.
\end{proof}

The previous assertion shows that on the set $\Sigma$ the solution
$F_\epsilon(\theta)$ of \eqref{16} does not converge, as $\epsilon\to 0$,
to the solution $F(\theta)$ of \eqref{25} unless we consider the trivial
solutions $F_\epsilon(\theta) = F(\theta) =0$.

\section{Equilibrium potential and capacities}
\label{sec1}

We estimate in this section capacities between wells. We start with an
explicit formula for the adjoint of $L _\epsilon$ in
$L^2(\mu_\epsilon)$, the Hilbert space of measurable functions $f:\bb
T \to\bb R$ endowed with the scalar product given by
\begin{equation*}
\< f, g\>_{\mu_\epsilon} \;=\; \int_{\bb T} f(\theta)\, g(\theta)\,
\mu_\epsilon(d\theta)\;. 
\end{equation*}

Integrating the equation \eqref{76} once provides that
\begin{equation}
\label{12}
 m_\epsilon'(\theta) \;=\; -\, R_\epsilon \,+\,
\frac 1{\epsilon} \, b(\theta)\,  m_\epsilon (\theta) \;, \quad\text{where}
\;\; R_\epsilon\;=\;  \frac 1{c(\epsilon)} \, \Big(
1 - e^{-B/\epsilon} \Big)\;.
\end{equation}
Note that $R_\epsilon$ is positive and that it vanishes if $B=0$.  

Denote by $L^*_\epsilon$ the adjoint operator of $L_\epsilon$ in
$L^2(\mu_\epsilon)$. It follows from \eqref{12} that for every twice
continuously differentiable function $f:\bb T\to\bb R$,
\begin{equation*}
(L^*_\epsilon f)(\theta) \;=\; \Big( b(\theta) \,-\, 
\frac{2\, \epsilon \, R_\epsilon}{m_\epsilon(\theta)} \Big)\,
f'(\theta) \;+\; \epsilon \, f''(\theta) \;.
\end{equation*}
In particular, $L^*_\epsilon = L_\epsilon$ if $B=0$, and the symmetric
part of the generator, denoted by $S_\epsilon = (1/2)(L_\epsilon +
L^*_\epsilon)$,  is given by
\begin{equation*}
(S_\epsilon f)(\theta) \;=\; \Big( b(\theta) \,-\, 
\frac{\epsilon \, R_\epsilon}{m_\epsilon(\theta)} \Big)\,
f'(\theta) \;+\; \epsilon \, f''(\theta) \;.
\end{equation*}

The Dirichlet form, denoted by $D_\epsilon(\cdot)$, associated to the
generator $L_\epsilon$ is given by
\begin{equation}
\label{14}
D_\epsilon (f)\;:=\; -\, \int_{\bb T} f(\theta) \,  (S_\epsilon f)(\theta) \,
m_\epsilon(\theta) \, d\theta
\;=\; \epsilon\, \int_{\bb T} [f'(\theta)]^2 \, m_\epsilon(\theta)\,
d\theta\;.  
\end{equation}

\smallskip\noindent{\bf Equilibrium potential and capacity.} Fix two
disjoint closed intervals $\ms A_1=[\theta^-_1,\theta^+_1]$,
$\ms A_2=[\theta^-_2,\theta^+_2]$ of $\bb T$. Without loss of
generality, we suppose that
\begin{equation}
\label{50}
0 \;\le\; \theta^-_1 \;\le\; \theta^+_1 \;<\; \theta^-_2 
\;\le\; \theta^+_2 <1\;.
\end{equation}
Note that we allow the intervals to be reduced to a point.  The unique
solution to the elliptic problem
\begin{equation*}
\begin{cases}
(L_\epsilon f)(\theta) \;=\; 0\;,\;\; 
\theta\in \bb T\setminus (\ms A_1\cup \ms A_2)\;, \\
f(\theta) \;=\; \chi_{\ms A_1} (\theta) \;,\;\; 
\theta\in \ms A_1\cup \ms A_2\;.    
\end{cases}
\end{equation*}
is called the equilibrium potential between the sets $\ms A_1$ and
$\ms A_2$, and is denoted by
$h_{\ms A_1,\ms A_2} = h_{\ms A_1,\ms A_2}^\epsilon$.

In dimension $1$, an explicit formula for the equilibrium potential is
available, a straightforward computation shows that
\begin{equation}
\label{11}
h_{\ms A_1,\ms A_2}(\theta) \;=\; \frac{\int_\theta^{\theta^-_2} e^{S(y)/\epsilon} dy}
{\int_{\theta^+_1}^{\theta^-_2} e^{S(y)/\epsilon} dy} \,
\chi_{[\theta^+_1 , \theta^-_2]} (\theta) \;+\;
\frac{\int_{\theta^+_2}^\theta e^{S(y)/\epsilon} dy}
{\int_{\theta^+_2}^{1+\theta^-_1} e^{S(y)/\epsilon} dy} \, 
\chi_{[\theta^+_2 , 1+\theta^-_1]} (\theta) \;.
\end{equation}

Define the capacity between $\ms A_1$ and $\ms A_2$ as the Dirichlet
form of the equilibrium potential:
\begin{equation*}
\Cap_\epsilon (\ms A_1,\ms A_2) \;:=\; D_\epsilon (h_{\ms A_1,\ms A_2}) 
\;=\; \epsilon\, \int_{\bb T} [h_{\ms A_1,\ms A_2}'(\theta)]^2 \, m_\epsilon(\theta)\,
d\theta\;.
\end{equation*}
We show in Assertion \ref{as03} below that
\begin{equation}
\label{30}
\Cap_\epsilon (\ms A_1,\ms A_2) \;=\; \epsilon\, 
\big\{ h_{\ms A_1,\ms A_2}'(\theta^-_1) \, m_\epsilon(\theta^-_1)
\,-\, h_{\ms A_1,\ms A_2}'(\theta^+_1) \, m_\epsilon(\theta^+_1)\big\}\;.
\end{equation}
Moreover, as $h_{\ms A_2,\ms A_1} = 1 - h_{\ms A_1,\ms A_2}$,
\begin{equation*}
\Cap_\epsilon (\ms A_2,\ms A_1) \;=\; \Cap_\epsilon (\ms A_1,\ms A_2)
\;.
\end{equation*}

\smallskip\noindent{\bf Estimation of Capacity.} We present in
Propositions \ref{p04}--\ref{p03} below sharp estimates of the
capacity between two sets which satisfy the conditions below.

Assume that the intervals $\ms A_1=[\theta^-_1,\theta^+_1]$,
$\ms A_2=[\theta^-_2,\theta^+_2]$ represent wells (cf. Section
\ref{sec3}.1) in the sense that
\begin{equation}
\label{44}
V(\theta) \;<\; V(\theta^-_i) \;=\; V(\theta^+_i)
\;<\; H \quad \text{for all} \quad 
\theta\in(\theta^-_i ,\, \theta^+_i)\;, \;\; i=1,\,2\;.
\end{equation}
We refer to Figure \ref{fig3}. In particular, each interval $\ms A_i$
is contained in some valley, denoted by $\ms W_i = \ms V_{n(i),k(i)}$,
of some landscape $\Lambda'_i = \Lambda_{n(i)}$. Of course, the
valleys and the landscapes may coincide or not.

As the sets $\ms A_i$ are contained in valleys and the pre-factors
$G_a$, $a=0$, $1$, are constant in valleys, 
\begin{equation}
\label{51}
G_a(\theta^-_i) \;=\; G_a(\theta^+_i) \;,\quad a=0\,, 1\;, \;\;
i=1\,,\, 2\;.
\end{equation}
This identity will be used repeatedly below to replace $\theta^-_i$ by
$\theta^+_i$.

By Assertion \ref{as13}, $V$ and $S$ differ only by an additive
constant on the valley $\ms W_i$. In particular,
\begin{equation*}
S(\theta) \;=\; V(\theta) \;+\; C_i \;,\;\; \theta\in \ms A_i\;, 
\end{equation*}
and $V$ is differentiable in $\ms W_i$. It follows from
\eqref{44} that $V'(\theta^-_i) \le 0 \le V'(\theta^+_i)$. We assume
a strict inequality:
\begin{equation}
\label{45}
V'(\theta^-_i) \;<\; 0  \;<\;  V'(\theta^+_i)\;.
\end{equation}

Two points $\sigma$, resp. $\sigma^*$ in $(\theta^+_1, \theta^-_2)$,
resp. $(\theta^+_2, 1 + \theta^-_1)$, are called saddle points between
$\ms A_1$ and $\ms A_2$ if
\begin{align*}
& S(\sigma) \;=\; \max \big\{S(y) :  
\theta^+_1 \le y \le \theta^-_2  \big\}\;, \\
& \quad
S(\sigma^*) \;=\; \max \big\{S(y) : 
\theta^+_2 \le x \le 1+\theta^-_1 \big\}\;.
\end{align*}
Of course, there may be more than one, but let us fix two saddle
points between $\ms A_1$ and $\ms A_2$, $\sigma_{1,2}\in (\theta^+_1,
\theta^-_2)$, $\sigma_{2,1} \in (\theta^+_2, 1 + \theta^-_1)$.

Observe that $V(\sigma_{2,1}) = H$ if $V(\sigma_{1,2}) < H$.  Indeed,
if $V(\sigma_{1,2}) < H$, $\ms A_1$, $\ms A_2$ belong to the same
valley. This implies that $V(\sigma) = H$ for all saddle points in
$(\theta^+_2 , 1+\theta^-_1)$. \smallskip

In the computation of the capacity between $\ms A_1$, $\ms A_2$, three
cases emerge. The sets $\ms A_1$, $\ms A_2$ may belong to the same
valley, to different valleys but to the same landscape, or to
different landscapes. Consider first the case, illustrated in Figure
\ref{fig3}, in which both sets belong to the same valley.

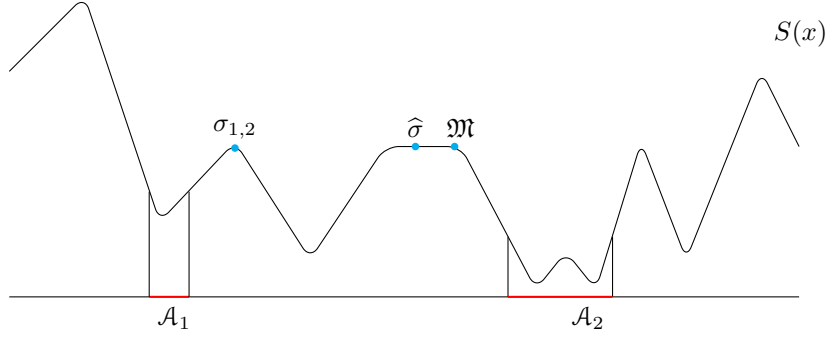
\begin{figure}
  \centering
\begin{tikzpicture}
\draw (-5,3) 
[rounded corners = 5pt] -- (-4,4) 
[rounded corners = 5pt] -- (-3,1) 
[rounded corners = 5pt] -- (-2,2.06) 
[rounded corners = 5pt] -- (-1,.5) 
[rounded corners = 5pt] -- (0,2) 
[rounded corners = 5pt] -- (1,2)
[rounded corners = 5pt] -- (2,.1)
[rounded corners = 5pt] -- (2.4,.6)
[rounded corners = 5pt] -- (2.8,.1)
[rounded corners = 5pt] -- (3.4,2.06)
[rounded corners = 5pt] -- (4,.5)
[rounded corners = 5pt] -- (5,3)
[rounded corners = 5pt] -- (5.5,2);
\draw (6,3.5) node[anchor=east] {$S(x)$};
\draw[red, thick] (-3.14,0) -- (-2.61, 0);
\draw (-3.14,0) -- (-3.14, 1.4);
\draw (-2.61,0) -- (-2.61, 1.4);
\draw (-2.8,0) node[anchor=north] {$\ms A_1$};
\draw (1.63,0) -- (1.63, .8);
\draw (3.02,.8) -- (3.02, 0);
\draw[red, thick] (1.63,0) -- (3.02, 0);
\draw (2.7,0) node[anchor=north] {$\ms A_2$};
\draw (-5,0) -- (-3.14, 0);
\draw (-2.61,0)--(1.63,0);
\draw (3.02,0) -- (5.5,0);
\draw (-2,1.98) node[anchor=south] {$\sigma_{1,2}$};
\draw (.4,2) node[anchor=south] {$\widehat\sigma$};
\draw (1,2) node[anchor=south] {$\mf M$};
\fill[cyan] (-2,1.98) circle [radius = .05cm];
\fill[cyan] (.4,2) circle [radius = .05cm];
\fill[cyan] (.92,2) circle [radius = .05cm];
\end{tikzpicture}
\caption{This figure represents two disjoint intervals $\ms A_1$,
  $\ms A_2$ of $\bb T$ which belong to a valley $\ms W$. In this case,
  the energy barrier between $\ms A_1$ and $\ms A_2$ is much smaller
  inside the valley (that is, in the interval
  $[\theta^+_1, \theta^-_2]$) than outside it. The calculation of the
  capacity is thus reduced to a computation in the latter interval.}
\label{fig3}
\end{figure}

Assume that the sets $\ms A_i$ are contained in a valley
$\ms W = (\mf w^-, \mf w^+)$ . If
$\mf w^- < \theta^-_1 < \theta^+_2 < \mf w^+$, let $E_{1,2}$
be the set of local maxima $\mf M^+_k$ of $S$ in
$(\theta^+_1, \theta^-_2)$ such that $S(\mf M^+_k) = S(\sigma_{1,2})$:
\begin{equation}
\label{48b}
E_{1,2} \;=\; \big\{ k\in \{1, \dots, q\}: 
\mf M^+_k \in [\theta^+_1,\theta^-_2] \,,\, 
S(\mf M^+_k) = S(\sigma_{1,2}) \big\}\;. 
\end{equation}
If $\mf w^- < \theta^-_2 < 1 + \theta^+_1 < \mf w^+$, $E_{1,2}$
represents the set of local maxima $\mf M^+_k$ of $S$ in $(\theta^+_2,
1+\theta^-_1)$ such that $S(\mf M^+_k) = S(\sigma_{2,1})$. In Figure
\ref{fig3}, $E_{1,2} = \{a,b\}$ if $\sigma_{1,2} = \mf M^+_a$, $\mf M
= \mf M^+_b$.

For $1\le i\le q$, let
\begin{equation}
\label{07c}
\omega(\mf M^+_i) \;=\;  \omega_+(\mf M^+_i) \,+\, \omega_-(\mf M^-_i) \;. 
\end{equation}

\begin{proposition}
\label{p04}
Let $\ms A_1 =[\theta^-_1,\theta^+_1]$,
$\ms A_2 =[\theta^-_2,\theta^+_2]$ be two intervals satisfying
conditions \eqref{50}, \eqref{44}, \eqref{45}.  Suppose that the sets
$\ms A_1$, $\ms A_2$ belong to a valley $\ms W = (\mf w^-, \mf w^+)$,
$1\le j\le n$.  Then,
\begin{align*}
\Cap_\epsilon (\ms A_1,\ms A_2) \;=\; \big[ 1 + o(\sqrt{\epsilon}) \big]\, 
\frac{\epsilon}{Z_\epsilon}\, 
\frac{G_0(\theta^+_1)  \,+\, \sqrt{\epsilon} \, G_1(\theta^+_1) + o(\sqrt{\epsilon})} 
{\sum_{k\in E_{1,2}} [\mf M^+_k - \mf M^-_k
\,+\, \sqrt{\epsilon}\, \omega(\mf M^+_k) ]+ o(\sqrt{\epsilon})}
\, e^{-V(\sigma_{1,2})/\epsilon}\;. 
\end{align*}
\end{proposition}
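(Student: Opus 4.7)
The plan is to combine identity \eqref{30} with the explicit formula \eqref{11} for the equilibrium potential, and then to insert the sharp asymptotics provided by Propositions \ref{l01} and \ref{l02} together with the Laplace estimates of Assertions \ref{as09a}--\ref{as09b}. Differentiating \eqref{11} at the endpoints of $\ms A_1$ gives
\begin{equation*}
h_{\ms A_1,\ms A_2}'(\theta^+_1) \,=\, -\,\frac{e^{S(\theta^+_1)/\epsilon}}{\int_{\theta^+_1}^{\theta^-_2} e^{S(y)/\epsilon}\,dy}, \qquad h_{\ms A_1,\ms A_2}'(\theta^-_1) \,=\, \frac{e^{S(1+\theta^-_1)/\epsilon}}{\int_{\theta^+_2}^{1+\theta^-_1} e^{S(y)/\epsilon}\,dy},
\end{equation*}
where the second derivative is interpreted as the left derivative on the torus, computed at $1+\theta^-_1$ from below. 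Substituting these into \eqref{30}, together with $m_\epsilon(x)=\pi_\epsilon(x)/c(\epsilon)$, $\pi_\epsilon(x) = e^{-S(x)/\epsilon}\int_x^{x+1} e^{S(y)/\epsilon}\,dy$, and $S(1+\theta^-_1)=S(\theta^-_1)-B$, one obtains
\begin{equation*}
\Cap_\epsilon(\ms A_1,\ms A_2) \,=\, \frac{\epsilon}{c(\epsilon)}\bigg[\, A^+_\epsilon \,+\, e^{-B/\epsilon}\, A^-_\epsilon\, \bigg],
\end{equation*}
where $A^{\pm}_\epsilon$ is the ratio of $\pi_\epsilon(\theta^\pm_1)\, e^{S(\theta^\pm_1)/\epsilon}$ over the corresponding denominator above.

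The next step would be to estimate the four integrals separately. Since $\theta^\pm_1$ lie in the same valley $\ms W$ of the landscape $\Lambda_{n(1)}$ containing $\ms A_1$, Proposition \ref{l01} and identity \eqref{51} yield, using that $V$ and $S$ differ by the additive constant $S(\mf L^+_{n(1)+1})$ on $\Lambda_{n(1)}$ (Assertion \ref{as13}),
\begin{equation*}
\pi_\epsilon(\theta^\pm_1)\, e^{S(\theta^\pm_1)/\epsilon} \;=\; \big[\,G_0(\theta^+_1)\,+\,\sqrt{\epsilon}\,G_1(\theta^+_1)\,+\,o(\sqrt{\epsilon})\,\big]\, e^{S(\mf L^+_{n(1)+1})/\epsilon}.
\end{equation*}
For the denominator of $A^+_\epsilon$, a Laplace-type decomposition splits $[\theta^+_1,\theta^-_2]$ into the plateaux $[\mf M^-_k,\mf M^+_k]$ for $k\in E_{1,2}$, on which the integrand equals $e^{S(\sigma_{1,2})/\epsilon}$; small one-sided neighborhoods $[\mf M^+_k,\mf M^+_k+\eta]$ and $[\mf M^-_k-\eta,\mf M^-_k]$ handled by Assertions \ref{as09a}--\ref{as09b}, each contributing a factor $\sqrt{\epsilon}\,\omega_{\pm}(\mf M^\pm_k)\, e^{S(\sigma_{1,2})/\epsilon}$; and remainder regions, on which $S \le S(\sigma_{1,2}) - c(\eta)$ is exponentially subleading. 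Summing, and recalling \eqref{07c},
\begin{equation*}
\int_{\theta^+_1}^{\theta^-_2} e^{S(y)/\epsilon}\, dy \,=\, e^{S(\sigma_{1,2})/\epsilon}\Big[\,\sum_{k\in E_{1,2}} \big(\mf M^+_k - \mf M^-_k + \sqrt{\epsilon}\,\omega(\mf M^+_k)\big) + o(\sqrt{\epsilon})\Big].
\end{equation*}

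Combining these asymptotics with Proposition \ref{l02}, i.e.\ $c(\epsilon)=[1+o(\sqrt{\epsilon})]\,Z_\epsilon\,e^{H/\epsilon}$, and the identity $S(\mf L^+_{n(1)+1}) - S(\sigma_{1,2}) = H - V(\sigma_{1,2})$ (a direct consequence of $V=\widehat V + H$ and Assertion \ref{as13}), reduces $(\epsilon/c(\epsilon))\,A^+_\epsilon$ exactly to the right-hand side of the proposition. It remains to show that the second term is absorbed into the $o(\sqrt{\epsilon})$ error. The integral in the denominator of $A^-_\epsilon$ contains the saddle $\sigma_{2,1}$, which by the discussion preceding the statement satisfies $V(\sigma_{2,1}) = H$ because any path from $\theta^+_2$ to $1+\theta^-_1$ must leave $\ms W$. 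The same Laplace argument then gives $A^-_\epsilon$ of the same polynomial order but with the factor $e^{-V(\sigma_{2,1})/\epsilon}$ in place of $e^{-V(\sigma_{1,2})/\epsilon}$; since $V(\sigma_{2,1}) = H > V(\sigma_{1,2})$ and the extra prefactor $e^{-B/\epsilon}$ only reduces it further, this term is exponentially smaller than the first.

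The principal technical obstacle is the Laplace step above: one must verify that the contributions from the left and right of each plateau $[\mf M^-_k,\mf M^+_k]$ combine cleanly into the single weight $\omega(\mf M^+_k) = \omega_+(\mf M^+_k) + \omega_-(\mf M^-_k)$ of \eqref{07c}, and that the plateau itself contributes only its length $\mf M^+_k - \mf M^-_k$ without additional multiplicative correction. This requires choosing $\eta$-neighborhoods consistently, excluding the maxima outside $E_{1,2}$ (which are strictly below $S(\sigma_{1,2})$ by hypothesis), and invoking the error bound $\Xi(\cdot,\epsilon)\to 0$ from Assertions \ref{as09a}--\ref{as09b} uniformly over a finite number of saddles.
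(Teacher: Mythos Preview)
Your proof is correct and follows essentially the same route as the paper. The paper packages the computation you do with \eqref{30} and \eqref{11} into an intermediate result (Assertion \ref{as11}) before specializing to the same-valley case, but the substance is identical: both evaluate the Laplace integral over $[\theta^+_1,\theta^-_2]$ using Assertions \ref{as09a}--\ref{as09b} to get the denominator $\sum_{k\in E_{1,2}}[\mf M^+_k-\mf M^-_k+\sqrt{\epsilon}\,\omega(\mf M^+_k)]+o(\sqrt{\epsilon})$, and both show the other term is exponentially negligible. Your negligibility argument (via $V(\sigma_{2,1})=H>V(\sigma_{1,2})$ plus the extra $e^{-B/\epsilon}$) is a slight repackaging of the paper's (which bounds the integral $\int_{\theta^+_2}^{1+\theta^-_1}$ from below and reduces to $\widehat V(\sigma_{1,2})\le 0<B$), but they amount to the same exponential comparison.
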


We may replace on the right hand side $\theta^+_1$ by $\theta^+_2$
because $G_0$, $G_1$ are constant in the valleys. \smallskip

We turn to the case in which the sets $\ms A_1$, $\ms A_2$ belong to
different landscapes, so that  $z(\theta^+_1) < z(\theta^+_2) <
z(1+\theta^+_1)$. Figure \ref{fig2} illustrates this situation.

\begin{proposition}
\label{p05}
Let $\ms A_1 =[\theta^-_1,\theta^+_1]$, $\ms A_2
=[\theta^-_2,\theta^+_2]$ be two intervals satisfying conditions
\eqref{50}, \eqref{44}, \eqref{45}. Suppose that they belong to
different landscapes. Then,
\begin{equation*}
\Cap_\epsilon (\ms A_1,\ms A_2) \; =\; 
\big[ 1 + o(1) \big]\, \frac{\epsilon}{Z_\epsilon}\, e^{-H/\epsilon} \;.
\end{equation*}
\end{proposition}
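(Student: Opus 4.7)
My plan is to evaluate the capacity directly from the explicit formulae already in hand. Inserting expression~\eqref{11} for the equilibrium potential together with the identity $e^{S(\theta)/\epsilon}\pi_\epsilon(\theta) = \int_\theta^{\theta+1} e^{S(y)/\epsilon}\,dy$ into \eqref{30}, and using $S(y+1)=S(y)-B$ to reorganise the contribution coming from $\theta_1^-$, a short manipulation yields
\begin{equation*}
\Cap_\epsilon(\ms A_1,\ms A_2) \;=\; \frac{\epsilon}{c(\epsilon)}\,\bigl\{ T_1(\epsilon) \,+\, T_2(\epsilon)\bigr\}\;,
\end{equation*}
with
\begin{equation*}
T_1(\epsilon) \;=\; \frac{\int_{\theta_1^+}^{\theta_1^+ + 1} e^{S(y)/\epsilon}\, dy}{\int_{\theta_1^+}^{\theta_2^-} e^{S(y)/\epsilon}\, dy}\;,\qquad T_2(\epsilon) \;=\; \frac{e^{-B/\epsilon}\int_{\theta_1^-}^{1+\theta_1^-} e^{S(y)/\epsilon}\, dy}{\int_{\theta_2^+}^{1+\theta_1^-} e^{S(y)/\epsilon}\, dy}\;.
\end{equation*}
Since $c(\epsilon) = [1+o(\sqrt{\epsilon})]\,Z_\epsilon\, e^{H/\epsilon}$ by Proposition~\ref{l02}, the proposition reduces to proving that $T_1(\epsilon)+T_2(\epsilon)\to 1$ as $\epsilon\to 0$.

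Label the landscapes so that $\ms A_1\subset\Lambda_n$ and $\ms A_2\subset\Lambda_m$ with $n<m$. For $T_1$ the numerator equals $e^{S(\theta_1^+)/\epsilon}\pi_\epsilon(\theta_1^+)$, and since $z(\theta_1^+)=\mf L^+_{n+1}$, Lemma~\ref{l09} gives
\begin{equation*}
\int_{\theta_1^+}^{\theta_1^+ + 1} e^{S(y)/\epsilon}\, dy \;=\; [1+o(1)]\,\bigl\{G_0(\theta_1^+)+\sqrt{\epsilon}\,G_1(\theta_1^+)\bigr\}\, e^{S(\mf L^+_{n+1})/\epsilon}\;.
\end{equation*}
The difference between numerator and denominator of $T_1$ is an integral over $[\theta_2^-,\theta_1^++1]$ concentrated at $\mf L^+_{m+1}$; the sequence $k\mapsto S(\mf L^+_k)$ is strictly decreasing (directly from condition~(b) in the definition of $z$, since any point strictly to the right of $\mf L^+_k$ has strictly smaller $S$), so $S(\mf L^+_{m+1})<S(\mf L^+_{n+1})$, this difference is exponentially smaller than the numerator, and $T_1(\epsilon)=1+o(1)$.

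An entirely parallel computation handles $T_2(\epsilon)$. Using $\pi_\epsilon(\theta_1^-)=\pi_\epsilon(\theta_1^-+1)$ and \eqref{51}, the numerator is asymptotic to $\{G_0(\theta_1^+)+\sqrt{\epsilon}\,G_1(\theta_1^+)\}\,e^{[S(\mf L^+_{n+1})-B]/\epsilon}$, and the denominator is asymptotic to $\{G_0(\theta_2^+)+\sqrt{\epsilon}\,G_1(\theta_2^+)\}\,e^{S(\mf L^+_{m+1})/\epsilon}$. Since $m+1<n+1+\bs m$, monotonicity yields $S(\mf L^+_{n+1})-B = S(\mf L^+_{n+1+\bs m})<S(\mf L^+_{m+1})$, so $T_2(\epsilon)\to 0$ exponentially. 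The only non-routine point in the whole argument is the Laplace book-keeping around each dominant saddle, complicated by the fact that several local maxima of $S$ in a landscape may share the value $S(\ell_n)$ and that the plateau $\ms N_n$ may contribute at the same exponential order; this combinatorics is exactly what the functions $G_0, G_1$ of Lemma~\ref{l09} organise, and I would invoke that lemma as a black box rather than redo the expansions.
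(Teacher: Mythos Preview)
Your argument is correct and follows essentially the same route as the paper: the paper channels the computation through Assertion~\ref{as11} and then estimates the two integrals separately, whereas you go directly from \eqref{30}--\eqref{11} to the decomposition $T_1+T_2$ and show $T_1\to1$, $T_2\to0$ by the same Laplace comparisons. One small point: the claim ``label so that $n<m$'' is not quite automatic because $\Lambda_{\bs m}$ wraps through the origin, but the inequalities you actually use are exactly $z(\theta_1^+)<\theta_2^-$ and $z(\theta_2^+)<1+\theta_1^-$ (both immediate from the different-landscape hypothesis), which is precisely how the paper phrases it.
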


For $a=0$, $1$, let
\begin{equation*}
(\Delta_{1,2} G_a) \;=\; G_a (\theta^+_1) \,-\, G_a (\theta^+_2)\;, \quad
(\Delta_{2,1} G_a) \;=\; G_a (\theta^+_2) \,-\, G_a (\theta^+_1) \;.
\end{equation*}
Suppose that $\theta^+_1$, $\theta^+_2$ belong to the same landscape
but to different valleys. Then, either $z(\theta^+_2) = z
(\theta^+_1)$ or $z(\theta^+_2) = z (1+\theta^+_1)$.  If
$z(\theta^+_2) = z (\theta^+_1)$, $G_1 (\theta^+_2) < G_1
(\theta^+_1)$ so that $(\Delta_{1,2} G_1)>0$. While, if $z(\theta^+_2)
= z (1+\theta^+_1)$, $ G_1 (\theta^+_1) = G_1 (1+\theta^+_1) < G_1
(\theta^+_2)$ so that $(\Delta_{2,1} G_1)>0$. Similar conclusions hold
if we replace $G_1$ by $G_0$ and a strict inequality by an inequality.

\begin{proposition}
\label{p03}
Let $\ms A_1 =[\theta^-_1,\theta^+_1]$, $\ms A_2
=[\theta^-_2,\theta^+_2]$ be two intervals satisfying conditions
\eqref{50}, \eqref{44}, \eqref{45}. Suppose that they belong to
different valleys, but to the same landscape. Then, if
$z(\theta^+_1) = z(\theta^+_2)$,
\begin{align*}
\Cap_\epsilon (\ms A_1,\ms A_2) \; =\; 
\big[ 1 + o(\sqrt{\epsilon}) \big]\, \frac{\epsilon}{Z_\epsilon}\, 
\, e^{-H/\epsilon} \, 
\frac{G_0(\theta^+_1)  \,+\, \sqrt{\epsilon} \, G_1(\theta^+_1) 
\,+\, o(\sqrt{\epsilon})}
{(\Delta_{1,2} G_0) + \sqrt{\epsilon} \, (\Delta_{1,2} G_1) 
\,+\, o(\sqrt{\epsilon}) } \;\cdot
\end{align*}
If $z(\theta^+_2) = z(1+\theta^+_1)$,
$\Cap_\epsilon (\ms A_1,\ms A_2)$ is equal to
\begin{align*}
\big[ 1 + o(\sqrt{\epsilon})) \big]\, \frac{\epsilon}{Z_\epsilon}\, 
\, e^{-H/\epsilon} \, \Big( 1 \;+\; o(1) \;+\;
\frac{G_0(\theta^+_1)  \,+\, \sqrt{\epsilon} \, G_1(\theta^+_1)
\,+\, o(\sqrt{\epsilon})}
{(\Delta_{2,1} G_0) + \sqrt{\epsilon} \, (\Delta_{2,1} G_1)
\,+\, o(\sqrt{\epsilon})} \Big) \;.
\end{align*}
\end{proposition}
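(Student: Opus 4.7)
The plan is to reduce $\Cap_\epsilon(\ms A_1,\ms A_2)$ to sharp asymptotics of four explicit integrals in $S$, and then apply the tools already developed for Proposition \ref{l01}. Starting from identity \eqref{30}, differentiating the explicit equilibrium potential \eqref{11}, and using $m_\epsilon=\pi_\epsilon/c(\epsilon)$ together with $e^{S(\theta)/\epsilon}\pi_\epsilon(\theta)=\int_\theta^{1+\theta}e^{S(y)/\epsilon}dy$ and $e^{S(1+\theta^-_1)/\epsilon}=e^{-B/\epsilon}e^{S(\theta^-_1)/\epsilon}$, one derives
\begin{equation*}
\Cap_\epsilon(\ms A_1,\ms A_2) \;=\; \frac{\epsilon}{c(\epsilon)}\,\Bigl\{
\frac{\int_{\theta^+_1}^{1+\theta^+_1} e^{S(y)/\epsilon}\,dy}{\int_{\theta^+_1}^{\theta^-_2} e^{S(y)/\epsilon}\,dy}
\;+\; e^{-B/\epsilon}\,\frac{\int_{\theta^-_1}^{1+\theta^-_1} e^{S(y)/\epsilon}\,dy}{\int_{\theta^+_2}^{1+\theta^-_1} e^{S(y)/\epsilon}\,dy}\Bigr\}.
\end{equation*}

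The two \emph{full} integrals in the numerators equal $e^{S(\theta^\pm_1)/\epsilon}\pi_\epsilon(\theta^\pm_1)$ and are estimated by Proposition \ref{l01}; since $\theta^\pm_1$ lies in a valley, both equal $e^{S(z(\theta^+_1))/\epsilon}[G_0(\theta^+_1)+\sqrt{\epsilon}G_1(\theta^+_1)+o(\sqrt{\epsilon})]$ (using that $G_0,G_1$ are constant on valleys). The two \emph{partial} integrals over $[\theta^+_1,\theta^-_2]$ and $[\theta^+_2,1+\theta^-_1]$ are estimated by the same Laplace-type decomposition used in the proof of Lemma \ref{l09} via Assertions \ref{as09a} and \ref{as09b}: one localizes around each local maximum of $S$ in the range of integration whose $S$-value equals $\max S$ over that range, extracting $[\mf M^+_k-\mf M^-_k]+\sqrt{\epsilon}\,\omega(\mf M^+_k)$ for each such peak, and assembles the result into differences of the $G_0,G_1$-profiles between the two endpoints. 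The precise form of these differences is what distinguishes the two cases.

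In Case 1 ($z(\theta^+_1)=z(\theta^+_2)=\mu$), $\max S$ on both partial-integration intervals equals $S(\mu)=S(\ell_n)$, since both intervals traverse peaks at this common height. The short-path integral captures exactly the intermediate landscape peaks strictly between $\theta^+_1$ and $\theta^+_2$, so it equals $e^{S(\mu)/\epsilon}[(\Delta_{1,2}G_0)+\sqrt{\epsilon}(\Delta_{1,2}G_1)+o(\sqrt{\epsilon})]$; combined with the numerator this yields the stated fraction. The long-path summand carries the extra $e^{-B/\epsilon}$ prefactor and is therefore exponentially negligible. Invoking Proposition \ref{l02} (i.e.\ $c(\epsilon)=[1+o(\sqrt{\epsilon})]Z_\epsilon e^{H/\epsilon}$) concludes Case 1.

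In Case 2 ($z(\theta^+_2)=1+z(\theta^+_1)=1+\mu$), the short path still crosses the high peak $\mu$ of height $S(\mu)$ while the long path only crosses the lower peak $1+\mu$ of height $S(\mu)-B$. Using $\int_{\theta^+_1}^{\theta^-_2}=\int_{\theta^+_1}^{1+\theta^+_1}-\int_{\theta^-_2}^{1+\theta^+_1}$ and noting that the remainder $\int_{\theta^-_2}^{1+\theta^+_1}e^{S(y)/\epsilon}dy$ is of order $e^{-B/\epsilon}$ smaller than the first, the short-path integral matches the full numerator up to a $1+o(1)$ factor, so the first summand of the bracket contributes $[1+o(1)]$ after multiplying by $\epsilon/c(\epsilon)$. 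For the second summand, the long-path integral evaluates to $e^{[S(\mu)-B]/\epsilon}[(\Delta_{2,1}G_0)+\sqrt{\epsilon}(\Delta_{2,1}G_1)+o(\sqrt{\epsilon})]$; the prefactor $e^{-B/\epsilon}$ cancels against the $e^{-B/\epsilon}$ hidden in this denominator, producing the fractional contribution. Applying Proposition \ref{l02} once more gives the claim. The main technical obstacle is that $\theta^+_1$ and $\theta^+_2$ lie in different representatives in $\bb R$ of the same toroidal landscape, so one must use the periodicity $\mf L^\pm_{n+\bs m}=1+\mf L^\pm_n$ to interpret $(\Delta_{2,1}G_a)$ consistently in a common representative; this is also what justifies identifying the peak contributions in the partial integrals with the jumps of $G_0,G_1$.
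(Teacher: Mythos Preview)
Your proposal is correct and follows essentially the same route as the paper. Your starting identity is exactly the content of Assertion~\ref{as11} (which the paper derives from \eqref{30}, \eqref{11}, and Propositions~\ref{l01}--\ref{l02}), and your treatment of the two cases matches the paper's: in Case~1 the paper writes the short-path integral as $\int_{\theta^+_1}^{1+\theta^+_1}-\int_{\theta^-_2}^{1+\theta^-_2}$ and applies Proposition~\ref{l01} to each piece to extract $(\Delta_{1,2}G_a)$, while you localize directly at the intermediate peaks via Assertions~\ref{as09a}--\ref{as09b}; in Case~2 the paper evaluates the short-path integral directly (using $z(\theta^+_1)<\theta^-_2$) rather than via your difference, but both give the same $1+o(1)$ contribution, and the long-path analysis is identical.
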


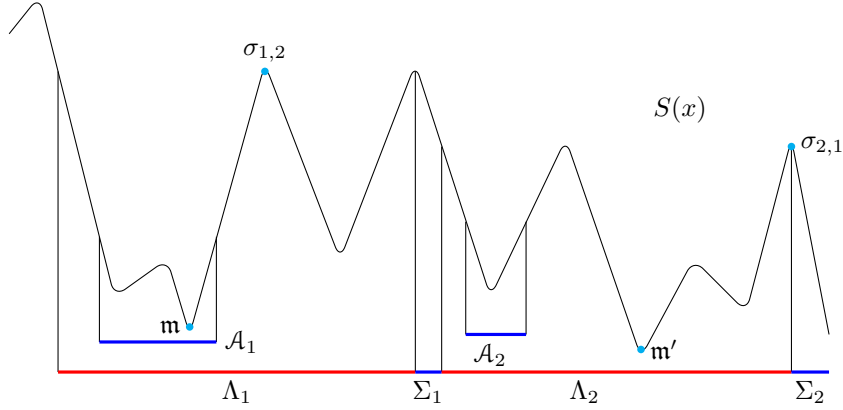
\begin{figure}
  \centering
\begin{tikzpicture}
\draw (-5.4,4.5) [rounded corners = 5pt] -- (-5,5) 
[rounded corners = 5pt] -- (-4, 1) 
[rounded corners = 5pt] -- (-3.3,1.5) 
[rounded corners = 5pt] -- (-3,.5) 
[rounded corners = 5pt] -- (-2,4.1) 
[rounded corners = 5pt] -- (-1,1.5) 
[rounded corners = 5pt] -- (0,4.1)
[rounded corners = 5pt] -- (1,1)
[rounded corners = 5pt] -- (2,3.1)
[rounded corners = 5pt] -- (3,.2)
[rounded corners = 5pt] -- (3.7,1.5)
[rounded corners = 5pt] -- (4.4,.8)
[rounded corners = 5pt] -- (5,3.1)
[rounded corners = 5pt] -- (5.5,.5);
\draw (4,3.5) node[anchor=east] {$S(x)$};
\draw (-4.75,0) -- (-4.75,4);
\draw (0,0) -- (0,4);
\draw[red, very thick] (-4.75,0) -- (0,0);
\draw (-2.37,0) node[anchor=north] {$\Lambda_1$};
\draw (-4.2, .4) -- (-4.2,1.8);
\draw (-2.645, .4) -- (-2.645,1.8);
\draw[blue, very thick] (-4.2, .4) -- (-2.645,.4);
\draw (-2.645,.4) node[anchor=west] {$\ms A_1$};
\draw (.35, 0) -- (.35,3);
\draw (5, 0) -- (5,3);
\fill[cyan] (-2,4) circle [radius = .05cm];
\draw (-2,4) node[anchor=south] {$\sigma_{1,2}$};
\draw (5,3) node[anchor=west] {$\sigma_{2,1}$};
\fill[cyan] (5,3) circle [radius = .05cm];
\fill[cyan] (-3,.6) circle [radius = .05cm];
\draw (-3,.6) node[anchor=east] {$\mf m$};
\fill[cyan] (3,.3) circle [radius = .05cm];
\draw (3,.3) node[anchor=west] {$\mf m'$};
\draw[blue, very thick] (0,0) -- (.35,0);
\draw (.17,0) node[anchor=north] {$\Sigma_1$};
\draw[red, very thick] (.35,0) -- (5,0);
\draw (2.25,0) node[anchor=north] {$\Lambda_2$};
\draw[blue, very thick] (5,0) -- (5.5,0);
\draw (5.25,0) node[anchor=north] {$\Sigma_2$};
\draw (.67,0.5) -- (.67,2);
\draw (1.472,.5) -- (1.472,2);
\draw[blue, very thick] (.67, .5) -- (1.472,.5);
\draw (1,.5) node[anchor=north] {$\ms A_2$};
\end{tikzpicture}
\caption{This figure illustrates the case in which the intervals
  $\ms A_i$ belong to different landscapes. Starting from $\ms A_1$
  the process reaches $\ms A_2$ by surmounting the energetic barrier
  $[S(\sigma_{1,2}) - S(\mf m)]/\epsilon$, while it reaches $\ms A_1$
  when starting from $\ms A_2$, by surpassing the energetic barrier
  $[S(\sigma_{2,1}) - S(\mf m')]/\epsilon$ because at the large
  deviations level, it never visits a landscape $\Lambda_n$ once in
  $\Lambda_{n+1}$. The capacity represents the height of the saddle
  point which in this case is equal to $H$ in both cases since
  $V(\sigma_{1,2}) = V(\sigma_{2,1}) = H$.}
\label{fig2}
\end{figure}

The proofs of the previous results rely on the next claim.

\begin{asser}
\label{as11}
Let $\ms A_1 =[\theta^-_1,\theta^+_1]$,
$\ms A_2 =[\theta^-_2,\theta^+_2]$ be two intervals satisfying
conditions \eqref{50}, \eqref{44}, \eqref{45}. Then,
\begin{equation*}
\begin{aligned}
\Cap_\epsilon (\ms A_1,\ms A_2) \; & =\; 
\big[1 + o(\sqrt{\epsilon}) \big]\, \frac{\epsilon}{Z_\epsilon}\,
\Big\{G_0(\theta^+_1)  \,+\, \sqrt{\epsilon} \, G_1(\theta^+_1) 
\,+\, o(\sqrt{\epsilon}) \Big\} 
\, e^{-V(\theta^+_1)/\epsilon} \;\times \\
&\qquad \times \; \Big\{ \frac{1}
{\int_{\theta^+_2}^{1+\theta^-_1} e^{[S(y)-S(1+\theta^-_1)]/\epsilon} \, dy}\, 
\,+\, \frac{1}
{\int_{\theta^+_1}^{\theta^-_2} e^{[S(y) -S(\theta^+_1)]/\epsilon} \, dy} \Big\}\;.
\end{aligned} 
\end{equation*}
\end{asser}

\begin{proof}
Fix two intervals $\ms A_1 =[\theta^-_1,\theta^+_1]$, $\ms A_2
=[\theta^-_2,\theta^+_2]$ satisfying the hypotheses of the assertion.

In view of equations \eqref{11}, \eqref{30}, 
\begin{equation*}
\Cap_\epsilon (\ms A_1,\ms A_2) \;=\; \epsilon\, \Big\{ 
\frac{e^{S(1+\theta^-_1)/\epsilon}}
{\int_{\theta^+_2}^{1+\theta^-_1} e^{S(y)/\epsilon} \, dy}
\, m_\epsilon(\theta^-_1)
\,+\, \frac{e^{S(\theta^+_1)/\epsilon}}
{\int_{\theta^+_1}^{\theta^-_2} e^{S(y)/\epsilon} \, dy}
\, m_\epsilon(\theta^+_1) \Big\}\;.
\end{equation*}
Note that in the first ratio on the right hand side we have
$1+\theta^-_1$ instead of $\theta^-_1$ because we are now working on
the line so that the harmonic function is defined on the interval
$[\theta^+_2 , 1+\theta^-_1]$. As $m_\epsilon$ is periodic,
$m_\epsilon(1+\theta^-_1) = m_\epsilon(\theta^-_1)$.

Propositions \ref{l01}, \ref{l02} provide a formula for $m_\epsilon =
\pi_\epsilon/c(\epsilon)$. By \eqref{51}, we may replace
$G_a(\theta^-_1)$ by $G_a(\theta^+_1)$, $a=1$, $2$. To complete the
proof, it remains to recall that $V(\theta^-_1) = V(\theta^+_1)$ by
hypothesis.
\end{proof}

\begin{proof}[Proof of Proposition \ref{p04}]
Fix two intervals $\ms A_1 =[\theta^-_1,\theta^+_1]$,
$\ms A_2 =[\theta^-_2,\theta^+_2]$ satisfying the hypotheses of the
proposition. 

We estimate the second term inside braces in the formula for the
capacity appearing in Assertion \ref{as11}.  Since $\ms A_1$ and $\ms
A_2$ belong to the same valley, by Assertion \ref{as13}, in the
interval $[\theta^+_1,\theta^-_2]$, the functions $V$ and $S$ differ
only by an additive constant. Recall the definition \eqref{48b} of the
set $E_{1,2}$. By the computation performed in Assertion \ref{as09a},
\begin{align*}
& \int_{\theta^+_1}^{\theta^-_2} e^{[S(y) -S(\theta^+_1)]/\epsilon} \,
dy \\
&\quad \;=\;
\Big( \sum_{k\in E_{1,2}} [\mf M^+_k - \mf M^-_k] 
\,+\, \sqrt{\epsilon}\,  \sum_{k \in E_{1,2}} \omega(\mf M^+_k)
\,+\, o(\sqrt{\epsilon}) \Big)
\, e^{[S(\sigma_{1,2}) -S(\theta^+_1)]/\epsilon}\;.
\end{align*}
Since $V$ and $S$ differ by a constant, we may replace in the previous
formula $S(\sigma_{1,2}) -S(\theta^+_1)$ by
$V(\sigma_{1,2}) - V(\theta^+_1)$.

We turn to the first term inside braces. As $\ms A_1$, $\ms A_2$
belong to the same valley, $z(\theta^+_2) = z(\theta^-_1)$, so that 
$z(\theta^+_2) <  1 + \theta^-_1$ because $z(\theta^-_1) <  1 +
\theta^-_1$. Therefore, since $S(1+\theta^-_1) = - B + S(\theta^-_1)$, 
\begin{equation*}
\int_{\theta^+_2}^{1+\theta^-_1} e^{[S(y)-S(1+\theta^-_1)]/\epsilon} \, dy
\;=\; e^{[S(z(\theta^+_2)) - S(\theta^-_1) + B]/\epsilon}
\int_{\theta^+_2}^{1+\theta^-_1} e^{[S(y)-S(z(\theta^+_2))]/\epsilon} \, dy
\;.
\end{equation*}
As $z(\theta^+_2) = z(\theta^-_1)$,
$S(z(\theta^+_2)) - S(\theta^-_1) = -\widehat V(\theta^-_1)$. On the
other hand, since $z(\theta^+_2) < 1 + \theta^-_1$, by the computation
performed in Assertion \ref{as09a}
\begin{equation*}
\int_{\theta^+_2}^{1+\theta^-_1} e^{[S(y)-S(z(\theta^+_2))]/\epsilon}
\, dy \;\ge\; C_0 \, \sqrt{\epsilon}  
\end{equation*}
for some positive constant $C_0$ independent of $\epsilon$. Thus, the
right hand side of the penultimate displayed equation is bounded below
by $C_0 \, \sqrt{\epsilon} \, \exp\{-[\widehat
V(\theta^-_1)-B]/\epsilon\}$. 

To complete the proof of the proposition, it remains to show that
$\widehat V(\sigma_{1,2}) < B$, but this is clear because $\widehat
V(\sigma_{1,2}) \le 0 < B$.
\end{proof}

\begin{proof}[Proof of Proposition \ref{p05}]
In the formula for the capacity of Assertion \ref{as11}, write the
second term in the expression inside braces as
\begin{equation*}
\int_{\theta^+_1}^{\theta^-_2} e^{[S(y) -S(\theta^+_1)]/\epsilon} \,
dy \;=\; e^{[S(z(\theta^+_1)) - S(\theta^+_1)]/\epsilon}
\int_{\theta^+_1}^{\theta^-_2} e^{[S(y) -S(z(\theta^+_1))]/\epsilon}
\, dy\;.
\end{equation*}
Since $z(\theta^+_1) < \theta^-_2$, the integral on the right hand
side is equal to
$G_0(\theta^+_1) + \sqrt{\epsilon} \, G_1(\theta^+_1) +
o(\sqrt{\epsilon})$,
while the term appearing in the exponential is equal to
$-\, \widehat V(\theta^+_1)$.

We turn to the first term in the expression inside braces in Assertion
\ref{as11}. It can be written as 
\begin{equation*}
e^{[S(z(\theta^+_2))-S(1+\theta^-_1)]/\epsilon}\, 
\int_{\theta^+_2}^{1+\theta^-_1} e^{[S(y)-S(z(\theta^+_2))]/\epsilon} \, dy\;.
\end{equation*}
Since $z(\theta^+_2) < 1+\theta^-_1$, the integral on the right hand
side is equal to
$G_0(\theta^+_2) + \sqrt{\epsilon} \, G_1(\theta^+_2) +
o(\sqrt{\epsilon})$.
As $S(1+\theta^-_1) = S(\theta^-_1)-B$ and
$-\, \widehat V(\theta^+_1) = -\, \widehat V(\theta^-_1) =
S(z(\theta^-_1)) - S(\theta^-_1)$,
to complete the proof of the proposition, it remains to show that
$S(z(\theta^-_1)) < S(z(\theta^+_2)) + B$. This is clear because
$S(z(\theta^-_1)) - B = S(1+z(\theta^-_1)) = S(z(1+\theta^-_1)) <
S(z(\theta^+_2))$.
The last inequality follows from the fact that
$z(\theta^+_2) < 1+\theta^-_1$.
\end{proof}

\begin{proof}[Proof of Proposition \ref{p03}]
Assume that $z(\theta^+_1) = z(\theta^+_2)$.  We estimate the
integrals appearing in Assertion \ref{as11}. Clearly,
\begin{equation*}
\int_{\theta^+_1}^{\theta^-_2} e^{[S(y) -S(\theta^+_1)]/\epsilon} \,
dy \;=\; \int_{\theta^+_1}^{1+\theta^+_1} e^{[S(y) -S(\theta^+_1)]/\epsilon} \,
dy \;- \; \int_{\theta^-_2}^{1+\theta^+_1} e^{[S(y) -S(\theta^+_1)]/\epsilon} \,
dy \;.
\end{equation*}
Since $z(\theta^-_2) = z(\theta^+_1) < 1+\theta^+_1$, in the second
integral we may replace $1+\theta^+_1$ by $1+\theta^-_2$. By
Proposition \ref{l01}, the right hand side is equal to
\begin{align*}
& \big \{G_0(\theta^+_1) 
\,+\, \sqrt{\epsilon} \, G_1(\theta^+_1) 
\,+\, o(\sqrt{\epsilon}) \big\}  
\, e^{[S(z(\theta^+_1)) - S(\theta^+_1)]/\epsilon} \\
&\quad \;- \; 
\big \{G_0(\theta^-_2) \,+\, \sqrt{\epsilon} \, G_1(\theta^-_2) 
\,+\, o(\sqrt{\epsilon}) \big\} 
\, e^{[S(z(\theta^-_2)) - S(\theta^+_1)]/\epsilon}\;.
\end{align*}
As $z(\theta^-_2) = z(\theta^+_1)$ and $G_a(\theta^-_2) =
G_a(\theta^+_2)$, in view of the definition of $\Delta_{1,2}G_a$,
this difference is equal to
\begin{equation*}
\big \{ (\Delta_{1,2}G_0) \,+\, \sqrt{\epsilon} \, (\Delta_{1,2}G_1) 
\,+\, o(\sqrt{\epsilon})\,  \big\}  \, e^{- \widehat V(\theta^+_1)/\epsilon} \;.
\end{equation*}

On the other hand, since $z(\theta^+_2) = z(\theta^-_1) <
1+\theta^-_1$, there exists a constant $C_0>0$, independent of
$\epsilon$, such that
\begin{equation*}
\int_{\theta^+_2}^{1+\theta^-_1} e^{[S(y) -S(1+\theta^-_1)]/\epsilon} \,
dy \;\ge\; C_0\, \sqrt{\epsilon} \, 
e^{[S(z(\theta^+_2)) - S(1+\theta^-_1)]/\epsilon}\;.
\end{equation*}
As $z(\theta^+_2) = z(\theta^-_1)$ and $S(1+\theta^-_1) =
S(\theta^-_1) - B$, the expression in the exponential in the previous
formula is equal to $- \,\widehat V(\theta^-_1) + B = - \, \widehat
V(\theta^+_1) + B$. This proves the first assertion of the
proposition.

We turn to the case $z(\theta^+_2) = z(1+\theta^+_1)$. We estimate the
integrals appearing in Assertion \ref{as11}. Since $z(\theta^+_1) <
z(1+\theta^+_1)$, we have that $z(\theta^+_1) < z(\theta^+_2)$. This
implies that $z(\theta^+_1) < \theta^-_2$, so that
\begin{equation*}
\int_{\theta^+_1}^{\theta^-_2} e^{[S(y) -S(\theta^+_1)]/\epsilon} \,
dy \;=\; 
\big \{G_0(\theta^+_1) \,+\, G_1(\theta^+_1) \, \sqrt{\epsilon} 
\,+\, o(\sqrt{\epsilon})\, \big\}  
\, e^{- \widehat V(\theta^+_1)/\epsilon} \;.
\end{equation*}
On the other hand, 
\begin{align*}
& \int_{\theta^+_2}^{1+\theta^-_1} e^{[S(y)
  -S(1+\theta^+_1)]/\epsilon} \, dy \\
&\qquad \;=\; \int_{\theta^+_2}^{1+\theta^+_2} e^{[S(y) -S(1+\theta^+_1)]/\epsilon} \,
dy \;-\; \int_{1+\theta^-_1}^{1+\theta^+_2} e^{[S(y) -S(1+\theta^+_1)]/\epsilon} \,
dy \;.
\end{align*}
Since $z(1+\theta^-_1) = z(\theta^+_2) < 1+\theta^+_2$ and since
$G_a(1+\theta^-_1) = G_a(\theta^-_1)$, $a=1$, $2$, this
expression is equal to
\begin{align*}
& \big \{G_0(\theta^+_2) 
\,+\, \sqrt{\epsilon} \, G_1(\theta^+_2) \,+\, o(\sqrt{\epsilon})\big\}  
\, e^{[S(z(\theta^+_2)) - S(1+\theta^+_1)]/\epsilon} \\
&\quad \;- \; \big \{G_0(\theta^-_1) 
\,+\, \sqrt{\epsilon} \, G_1(\theta^-_1) 
\,+\, o(\sqrt{\epsilon}) \big\} 
\,  e^{[S(z(1+\theta^-_1)) - S(1+\theta^+_1)]/\epsilon}\;.
\end{align*}
As $z(\theta^+_2) = z(1+\theta^-_1) = z(1+\theta^+_1)$, and $\widehat
V(1+\theta^+_1) = \widehat V(\theta^+_1)$, the expressions in the
exponential above are equal to $-\widehat V(\theta^+_1)$, which
completes the proof of the proposition because $G_a(\theta^-_1) =
G_a(\theta^+_1)$, $a=1$, $2$.
\end{proof}

We conclude this section providing an alternative formula for the
capacity.

\begin{asser}
\label{as03}
Fix two disjoint closed intervals $\ms A_1=[\theta^-_1,\theta^+_1]$,
$\ms A_2=[\theta^-_2,\theta^+_2]$ of $\bb T$. Then,
\begin{equation*}
\Cap_\epsilon (\ms A_1,\ms A_2) \;=\; \epsilon\, 
\big\{ h_{\ms A_1,\ms A_2}'(\theta^-_1) \, m_\epsilon(\theta^-_1)
\,-\, h_{\ms A_1,\ms A_2}'(\theta^+_1) \, m_\epsilon(\theta^+_1)\big\}\;.
\end{equation*}
\end{asser}

\begin{proof}
By the expression \eqref{14} for the Dirichlet form, and since the harmonic
function is constant on the sets $\ms A_1$, $\ms A_2$, $D_\epsilon(h_{\ms A_1,\ms A_2})$ is
equal to the sum of two integrals. The first one is carried over the
interval $[\theta^+_1, \theta^-_2]$, while the second one over the interval
$[\theta^+_2, \theta^-_1]$. We estimate the first integral.  By an
integration by parts, 
\begin{align*}
\epsilon \int_{\theta^+_1}^{\theta^-_2} [h_{\ms A_1,\ms A_2}'(x)]^2 \, m_\epsilon(x)\,
dx \; & =\; \epsilon\, h_{\ms A_1,\ms A_2}(x)\,  h_{\ms A_1,\ms A_2}'(x) \, m_\epsilon(x)\,
\Big |_{\theta^+_1}^{\theta^-_2} \\
\; & -\; \epsilon \int_{\theta^+_1}^{\theta^-_2} h_{\ms A_1,\ms A_2}(x) \, \partial_x \big\{
h_{\ms A_1,\ms A_2}'(x) \, m_\epsilon(x) \big\}\, dx \;.
\end{align*}
Since the harmonic function vanishes at $\theta^-_2$ and is equal to
$1$ at $\theta^+_1$, by \eqref{12} and since $L_\epsilon h_{\ms A_1,\ms
  A_2} = 0$ on $(\theta^+_1,\theta^-_2)$, the previous expression is
equal to
\begin{equation*}
-\, \epsilon\,  h_{\ms A_1,\ms A_2}'(\theta^+_1) \, m_\epsilon(\theta^+_1)\;
+\; \epsilon \, R_\epsilon \, \int_{\theta^+_1}^{\theta^-_2} h_{\ms A_1,\ms A_2}(x) \,
 h_{\ms A_1,\ms A_2}'(x) \, dx \;.
\end{equation*}
The integral is equal to $(1/2) \epsilon \, R_\epsilon \,
\{h_{\ms A_1,\ms A_2}(\theta^-_2)^2 
- h_{\ms A_1,\ms A_2}(\theta^+_1)^2\} = -\, (1/2) \epsilon \,
R_\epsilon$. 

For similar reasons, the contribution to $D_\epsilon(h_{\ms A_1,\ms A_2})$
of the integral carried over the interval $[\theta^+_2, \theta^-_1]$
is equal to $\epsilon\, h_{\ms A_1,\ms A_2}'(\theta^-_1) \,
m_\epsilon(\theta^-_1) \,+\, (1/2) \epsilon \, R_\epsilon$. This
completes the proof of the assertion.
\end{proof}

\section{Metastability among the deepest valleys}
\label{sec3}

We examine in this section the metastable behavior of $X_\epsilon(t)$
among the deepest valleys. The goal is to define a finite-state,
continuous-time Markov chain, called the reduced chain, which
describes the evolution of the diffusion $X_\epsilon(t)$ among the
deepest wells in an appropriate time scale.

A similar analysis could be carried out for shallower valleys. This
task is left to the interested reader.  We assume throughout this
section that the drift $b$ satisfies the conditions ({\bf H4}) of
Subsection \ref{sec0}.4. 

\smallskip\noindent{\bf \ref{sec3}.1 The deepest valleys.}  Denote by
$\ms W_j = (\mf w^-_j , \mf w^+_j)$, $1\le j\le \bs n$, all valleys of
depth $H$. These are the valleys $\ms V_i$ introduced in \eqref{54}
such that $\min_{\theta\in \ms V_i} V(\theta)=0$.  Denote by
$\mf m_{j,k}\in \ms W_j$, $1\le k\le \kappa(j)$, the global minima of
$V$ on $\ms W_j$. Let $\ms E_j = (e^-_j, e^+_j)$ be a subset of
$\ms W_j$ which contains all minima $\mf m_{j,k}$ and such that
$V(e^-_j)= V(e^+_j)<H$, $V(\theta)<V(e^-_j)$ for all
$\theta\in \ms E_j$. The sets $\ms E_j$ are called wells. We refer to
Figure \ref{fig4} for an illustration. Assume, without loss of
generality, that the valleys are ordered in the sense that
$0<\mf m_{1,1} < \cdots < \mf m_{\bs n, 1}<1$.  Denote by $\bs \pi(j)$
the weight of the well $\ms E_j$:
\begin{equation}
\label{58}
\bs \pi(j) \;=\; \sum_{k=1}^{\kappa (j)} \sigma(\mf m_{j,k})
\;=\; \sum_{k=1}^{\kappa (j)} \sqrt{\frac{2\pi}{S''(\mf m_{j,k})}}
\;, \;\; 1\le j\le \bs n \;,
\end{equation}
where $\sigma(\mf m_{j,k})$ has been introduced in \eqref{39}.

Let $\mf M_{j,k}$, $1\le k\le \upsilon(j)$, be the global maxima of
$V$ which belong to the interval $(e^+_j, e^-_{j+1})$ and to the
landscape which contains $\ms W_j$. Hence if the valley $\ms W_j$ is
contained in the landscape $[\ell_n, \mf L^+_{n+1}]$, $\{ \mf M_{j,k}:
1\le k\le \upsilon(j)\} = \{ \mf M^+_{l} : \mf M^+_{l} \in (e^+_j,
e^-_{j+1}) \cap [\ell_n, \mf L^+_n] \,,\, V(\mf M^+_{l}) =H\}$. We
refer to Figure \ref{fig4} for an illustration. Denote by $\bs
\sigma_{j,j+1}$ the sum of the weights of these local maxima:
\begin{equation}
\label{59}
\bs \sigma_{j,j+1} \;=\; \sum_{k=1}^{\upsilon (j)} \omega(\mf M_{j,k})
\;=\; \sum_{k=1}^{\upsilon (j)} \sqrt{\frac{2\pi}{-\, S''(\mf M_{j,k})}}
\;, \;\; 1\le j\le \bs n \;.
\end{equation}

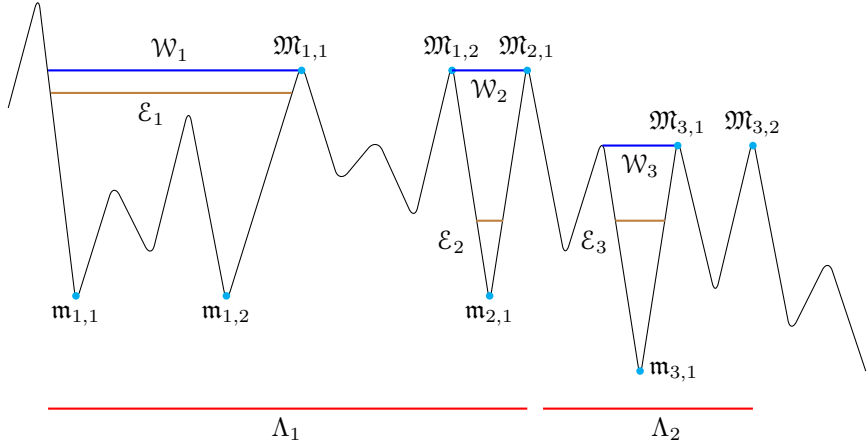
\begin{figure}
  \centering
\begin{tikzpicture}
\draw (-5.4,4.5) [rounded corners = 5pt] -- (-5,6) 
[rounded corners = 5pt] -- (-4.5, 1.9) 
[rounded corners = 5pt] -- (-4,3.5) 
[rounded corners = 5pt] -- (-3.5,2.5) 
[rounded corners = 5pt] -- (-3,4.5) 
[rounded corners = 5pt] -- (-2.5,1.9) 
[rounded corners = 5pt] -- (-1.5,5.1) 
[rounded corners = 5pt] -- (-1,3.5) 
[rounded corners = 5pt] -- (-.5,4.1)
[rounded corners = 5pt] -- (0,3)
[rounded corners = 5pt] -- (.5,5.1)
[rounded corners = 5pt] -- (1,1.9)
[rounded corners = 5pt] -- (1.5,5.1)
[rounded corners = 5pt] -- (2,2.5)
[rounded corners = 5pt] -- (2.5,4.1)
[rounded corners = 5pt] -- (3,.9)
[rounded corners = 5pt] -- (3.5,4.1)
[rounded corners = 5pt] -- (4,2)
[rounded corners = 5pt] -- (4.5,4.1)
[rounded corners = 5pt] -- (5,1.5)
[rounded corners = 5pt] -- (5.5,2.5)
[rounded corners = 5pt] -- (6,1);
\draw[thick, blue] (-4.87,5)--(-1.5,5);
\draw (-3.25,5) node[anchor=south] {$\ms W_1$};
\draw[brown, thick] (-4.83,4.7)--(-1.63,4.7);
\draw (-3.5,4.7) node[anchor=north] {$\ms E_1$};
\fill[cyan] (-4.5,2) circle [radius = .05cm];
\draw (-4.5,2) node[anchor=north] {$\mf m_{1,1}$};
\fill[cyan] (-2.5,2) circle [radius = .05cm];
\draw (-2.5,2) node[anchor=north] {$\mf m_{1,2}$};
\fill[cyan] (-1.5,5) circle [radius = .05cm];
\draw (-1.5,5) node[anchor=south] {$\mf M_{1,1}$};
\fill[cyan] (.5,5) circle [radius = .05cm];
\draw (.5,5) node[anchor=south] {$\mf M_{1,2}$};
\draw[thick, blue] (.5,5)--(1.5,5);
\draw (1,5) node[anchor=north] {$\ms W_2$};
\draw[brown, thick] (.82,3)--(1.18,3);
\draw (.8,3) node[anchor=north east] {$\ms E_2$};
\fill[cyan] (1,2) circle [radius = .05cm];
\draw (1,2) node[anchor=north] {$\mf m_{2,1}$};
\fill[cyan] (1.5,5) circle [radius = .05cm];
\draw (1.5,5) node[anchor=south] {$\mf M_{2,1}$};
\draw[red, thick] (-4.87,0.5)--(1.5,0.5);
\draw (-1.7,0.5) node[anchor=north] {$\Lambda_1$};
\draw[thick, blue] (2.5,4)--(3.5,4);
\draw (3,4) node[anchor=north] {$\ms W_3$};
\draw[brown, thick] (2.67,3)--(3.33,3);
\draw (2.7,3) node[anchor=north east] {$\ms E_3$};
\fill[cyan] (3,1) circle [radius = .05cm];
\draw (3,1) node[anchor=west] {$\mf m_{3,1}$};
\fill[cyan] (3.5,4) circle [radius = .05cm];
\draw (3.5,4) node[anchor=south] {$\mf M_{3,1}$};
\fill[cyan] (4.5,4) circle [radius = .05cm];
\draw (4.5,4) node[anchor=south] {$\mf M_{3,2}$};
\draw[red, thick] (1.71,0.5)--(4.5,0.5);
\draw (3.35,0.5) node[anchor=north] {$\Lambda_2$};
\end{tikzpicture}
\caption{This figure represents the wells and valleys in two
  landscapes. There are $3$ valleys whose depth is maximal, $\ms W_1$,
  $\ms W_2$ and $\ms W_3$. The first one contains two global minima of
  the quasi-potential $V$, while the other two only one. If the
  process starts from the well $\ms W_2$ the next well visited may be
  either $\ms W_1$ or $\ms W_3$, while if it starts from $\ms W_1$ the
  next well visited can only be $\ms W_2$.}
\label{fig4}
\end{figure}

\smallskip\noindent{\bf \ref{sec3}.2 The evolution among the wells $\ms E_j$.}
The asymptotic behavior of the diffusion $X_\epsilon(t)$ among the
wells $\ms E_l$ can be foretold. Rename the valleys $\ms W_1, \dots,
\ms W_{\bs n}$ as $\ms W_{a,k}$, $1\le a\le \bs p$, $1\le k\le n_a$,
in such a way that two valleys $\ms W_{a,k}$, $\ms W_{a',k'}$ belong
to the same landscape if and only if $a=a'$. Denote the minimum $\mf
m_{l,r}$ by $\mf m_{a,k,r}$ if $\ms W_l = \ms W_{a,k}$ and assume that
the valleys are ordered in the sense that $\mf m_{a,k,r} < \mf
m_{a',k',s}$ if $a<a'$ or if $a=a'$ and $k<k'$.

Assume that $X_\epsilon(0)$ belongs to $\ms W_i$. The next visited
valley can only be $\ms W_{i-1}$ or $\ms W_{i+1}$, where we adopt the
convention that $\ms W_{r\bs n +k} = r + \ms W_k$. However, if $\ms
W_{i} = \ms W_{a,1}$ for some $a$, since, by Remark \ref{rm2}, the
diffusion does not visit a landscape to its left, modulo a probability
exponentially close to $1$, the next visited valley is necessarily
$\ms W_{i+1}$. Hence, if $p(i,j)$ represents the jump probabilities of
the reduced chain, we must have that
\begin{equation}
\label{i02}
p(i,i+1) \;+\; p(i,i-1) \;=\; 1\quad\text{and}\quad
p(i,i+1) \;=\; 1 \;\;\text{if}\; \ms
W_{i} = \ms W_{a,1}\;\text{for some }a\;.
\end{equation}

We may compute the jump probabilities using formula \eqref{11} for the
equilibrium potential. Assume that $\ms W_i=\ms W_{a,k}$ for some
$k\ge 2$, and let
\begin{equation*}
p_\epsilon (i,i+1) \;=\; h_{\ms W_{i+1}, \ms W_{i-1}}(\mf m_{i,1})\;,
\end{equation*}
where $h_{\ms W_{i+1}, \ms W_{i-1}}$ is the equilibrium potential
introduced in \eqref{11}.  An elementary computation gives that
\begin{equation*}
p_\epsilon(i,i+1) \;=\; \big[1+o(1)\big]\, \frac{\bs \sigma_{i-1,i}}
{\bs \sigma_{i-1,i} + \bs \sigma_{i,i+1}}\;.
\end{equation*}
Therefore, we have to set
\begin{equation}
\label{55}
p(i,i+1) \;=\; \frac{\bs \sigma_{i-1,i}}
{\bs \sigma_{i-1,i} + \bs \sigma_{i,i+1}}\;,
\quad \text{if $\ms W_i \,=\, \ms W_{a,k}$ for some $k\ge 2$} \;.
\end{equation}
Equations \eqref{i02} and \eqref{55} characterize the jump
probabilities of the reduced chain.  We turn to the holding rates of
the reduced chain. By Proposition \ref{l02} and Lemma \ref{l09},
\begin{equation}
\label{57}
\mu_\epsilon (\ms E_i) \;=\;  [1+o(1)]\,
\bs \mu (i) \quad\text{where}\quad
\bs \mu(i) \;=\; \frac 1Z\, G_1(\mf m_{i,1})\, \bs \pi(i)\;.
\end{equation}
On the other hand, by similar computations to the ones presented in
the previous section and by Proposition \ref{p02}, $\Cap_\epsilon(\ms W_i, \ms
W_{i-1} \cup \ms W_{i+1}) = [1+o(1)]\, e^{-H/\epsilon} \, c(i)$,
where, 
\begin{align*}
& c(i) \;=\; \frac{G_1(\mf m_{i,1})}{Z} \, \frac 1{\bs
  \sigma_{i,i+1}}\quad\text{if $\; p(i,i+1)=1$}\;, \\
&\quad c(i) \;=\; \frac {G_1(\mf m_{i,1})}Z\, \Big( 
\frac 1{\bs \sigma_{i-1,i}} \,+\, \frac 1{ \bs \sigma_{i,i+1}} \Big) 
\quad\text{if $\; p(i,i+1)<1$}\;.
\end{align*}

It follows from the previous estimates and from equation (A.8) in
\cite{bl7}, that on the time-scale $e^{H/\epsilon}$, the diffusion
$X_\epsilon (t)$ is expected to evolve among the valleys $\ms W_j$ as
the $\{1, \dots, \bs n\}$-valued, continuous-time Markov chain with
jump probabilities given by \eqref{55} and holding times $\lambda(i)$
given by $\lambda(i) = c(i)/\bs \mu(i)$.

\smallskip\noindent{\bf \ref{sec3}.3 The reduced chain.} At this point
we have all elements to define the Markov chain which describes the
metastable behavior of $X(t)$. Denote by $R(j,k)$ the jump rates of
the Markov chain whose holding rates are $\lambda$ and whose jump
probabilities are $p$: $R(j,k) = \lambda(j) p(j,k)$, $j\not = k\in
S$. By the previous computations,
\begin{equation}
\label{70}
R(j,j+1) \;=\; \frac 1{\bs \pi(j)\, \bs \sigma_{j,j+1}}
\;,  \quad\text{$R(j+1,j) = 0$ or}\;\;
\frac 1{\bs \pi(j+1)\, \bs \sigma_{j,j+1}}\;,
\end{equation}
and $R(j,k)=0$ if $k\not = j\pm 1$.  More precisely, $R(j+1,j)=0$ if
$\ms W_{j+1} = \ms W_{a,1}$ for some $a$, and $R(j+1,j) = [\bs
\pi(j+1)\, \bs \sigma_{j,j+1}]^{-1}$ otherwise.

In view of the previous computation, denote by $\bs X(t)$ the
continuous-time Markov chain on $S=\{1, \dots, \bs n\}$ whose
generator $\bs L$ is given by
\begin{equation}
\label{66}
(\bs L f)(j) \;=\; \sum_{a=\pm 1} R(j,j+a)\, [F(j+a) -
F(j)]\;.
\end{equation}
Summation is performed modulo $\bs n$ in the previous formula. The
next result is proved at the end of this section.

\begin{lemma}
\label{ls01}
The measure $\bs \mu$, introduced in \eqref{57}, is the stationary
state of the Markov chain $\bs X(t)$.
\end{lemma}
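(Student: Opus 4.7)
Since $\bs X(t)$ jumps only between nearest neighbours on the cyclic state space $\{1,\dots,\bs n\}$, the stationarity of $\bs \mu$ is equivalent to the probability current
\[
J(k) \;:=\; \bs \mu(k)\, R(k,k+1) \;-\; \bs \mu(k+1)\, R(k+1,k)
\]
being constant in $k$ (with indices read modulo $\bs n$). My plan is to compute $J(k)$ directly from \eqref{57} and \eqref{70} and to show that $J(k)=1/Z$ in every case, whence $\bs \mu \bs L = 0$.

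The computation will rest on two identities for $G_1(\mf m_{k,1})$. Under assumption (\textbf{H4}), formula \eqref{27b} restricted to the landscape $\Lambda_n$ containing $\ms W_k$ takes the form
\[
G_1(x) \;=\; \sum_{j=1}^{r_n} \omega(\mf M_{a(n,j)})\; \mb 1\{x \le \mf M_{a(n,j)}\}\;,
\]
so $G_1(\mf m_{k,1})$ equals the sum of the $\omega$-weights of the height-$H$ maxima of $\Lambda_n$ lying to the right of $\mf m_{k,1}$, the right endpoint $\mf L^+_{n+1} = \mf M_{a(n,r_n)}$ included. Comparing this enumeration with the definition \eqref{59} of $\bs \sigma_{k,k+1}$, I would establish: \emph{(i)} if $\ms W_k$ and $\ms W_{k+1}$ lie in the same landscape, then $G_1(\mf m_{k,1}) - G_1(\mf m_{k+1,1}) = \bs\sigma_{k,k+1}$, because the weights that cancel in the difference are precisely those of the height-$H$ maxima sitting between the two wells; \emph{(ii)} if $\ms W_{k+1}$ is the first depth-$H$ valley of a new landscape (so that $\ms W_k$ is the last such valley of $\Lambda_n$), then $G_1(\mf m_{k,1}) = \bs\sigma_{k,k+1}$, since every maximum contributing to $G_1(\mf m_{k,1})$ lies in $(\mf m_{k,1},\mf L^+_{n+1}]\subset (e^+_k, e^-_{k+1})$ and is attributed by \eqref{59} to the landscape of $\ms W_k$.

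With these two identities in hand the verification is mechanical. In case \emph{(i)} both rates in \eqref{70} are positive and, substituting \eqref{57},
\[
J(k) \;=\; \frac{G_1(\mf m_{k,1})-G_1(\mf m_{k+1,1})}{Z\, \bs\sigma_{k,k+1}} \;=\; \frac{1}{Z}\;,
\]
while in case \emph{(ii)} we have $R(k+1,k)=0$ and
\[
J(k) \;=\; \frac{G_1(\mf m_{k,1})}{Z\, \bs\sigma_{k,k+1}} \;=\; \frac{1}{Z}\;.
\]
Thus $J$ is constant in $k$, which yields $\bs \mu \bs L = 0$. The only place I expect any real subtlety is identity \emph{(ii)}: one must keep careful track of which maxima belong to which landscape and check that the rightmost term $\omega(\mf L^+_{n+1})$ in the sum defining $G_1(\mf m_{k,1})$ is indeed included in $\bs\sigma_{k,k+1}$ — that is, that $\mf L^+_{n+1}$ is assigned by \eqref{59} to the landscape of $\ms W_k$ rather than to the saddle interval $\Sigma_{n+1}$ or to the next landscape. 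Once that bookkeeping is settled, the rest is pure substitution.
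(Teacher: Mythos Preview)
Your proposal is correct and rests on exactly the same two identities the paper uses, namely $G_1(\mf m_{a,j-1,1})-G_1(\mf m_{a,j,1})=\bs\sigma[(a,j-1),(a,j)]$ within a landscape and $G_1(\mf m_{a,n_a,1})=\bs\sigma[(a,n_a),(a+1,1)]$ at a landscape boundary. The paper establishes stationarity by expanding $E_{\bs\mu}[\bs L F]$ directly and regrouping the double sum over landscapes and positions; your reduction to constancy of the probability current is a cleaner way to organize the same computation for a nearest-neighbour chain on a cycle, and your bookkeeping concern about $\mf L^+_{n+1}$ is resolved by the paper's convention $\Lambda_n=[\ell_n,\mf L^+_{n+1}]$, so that this maximum does belong to the landscape of $\ms W_k$.
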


Denote by $D(\bb R_+, S)$ the space of right-continuous functions $x:
\bb R_+ \to S$ with left-limits endowed with the Skorohod topology,
and by $\bs Q_j$, $1\le j\le \bs n$, the probability measure on $D(\bb
R_+,S)$ induced by the Markov process whose generator is $\bs L$ and
which starts from $j$.

\smallskip\noindent{\bf \ref{sec3}.4 The metastable behavior.}
Denote by $\widehat X_\epsilon(t)$ the process $X_\epsilon(t)$
speeded-up by $e^{H/\epsilon}$. This is the diffusion on $\bb T$ whose
generator, denoted by $\widehat L_\epsilon$, is given by
$\widehat L_\epsilon = e^{H/\epsilon} L_\epsilon$.  Let
$C(\bb R_+, \bb T)$ be the space of continuous trajectories
$X: \bb R_+ \to\bb T$ endowed with the topology of uniform convergence
on compact subsets of $\bb R_+$.  Denote by $\bb P^\epsilon_\theta$,
$\theta\in \bb T$, the probability measure on $C(\bb R_+, \bb T)$
induced by the diffusion $\widehat X_\epsilon(t)$ starting from
$\theta$. Expectation with respect to $\bb P^\epsilon_\theta$ is
represented by $\bb E^\epsilon_\theta$.

Let 
\begin{equation}
\label{65}
\ms E \;=\; \bigcup_{j=1}^{\bs n} \ms E_j \;, \quad \Delta \;=\; \bb T
\,\setminus\, \ms E\;, \quad \breve{\ms E}_j \;=\; \bigcup_{k:k\not =
  j} \ms E_k\;. 
\end{equation}
Denote by $T_{\ms E} (t)$, $t\ge 0$, the total time spent by the
diffusion $\widehat X_\epsilon(t)$ on the set $\ms E$ in the time
interval $[0,t]$:
\begin{equation*}
T_{\ms E} (t)  \,:=\, \int_0^t \chi_{\ms E} (\widehat X_\epsilon(s)) \,ds\;.
\end{equation*}
Denote by $\{S_{\ms E} (t) : t\ge 0\}$ the generalized inverse of
$T_{\ms E} (t)$:
\begin{equation*}
S_{\ms E} (t) \,:=\, \sup\{s\ge 0 : T_{\ms E} (s) \le t \}\,.
\end{equation*}
Clearly, for all $r\ge 0$, $t\ge 0$,
\begin{equation}
\label{60}
\{ S_{\ms E} (r) \ge t\} \;=\; \{ T_{\ms E} (t) \le r\}\;.
\end{equation}
It is also clear that for any starting point $\theta\in \bb T$,
$\lim_{t\to \infty} T_{\ms E} (t) = \infty$ almost surely. Therefore,
the random path $\{Y_\epsilon (t) : t\ge 0\}$, given by $Y_\epsilon(t)
= \widehat X_\epsilon (S_{\ms E} (t))$, is well defined for all $t\ge
0$ and takes value in the set $\ms E$.  We call the process
$\{Y_\epsilon(t) : t\ge 0\}$ the trace of $\{ \widehat X_\epsilon(t) :
t\ge 0\}$ on the set $\ms E$.

Denote by $\{\ms F^0_t : t\ge 0\}$ the natural filtration of
$C(\bb R_+, \bb T)$: $\ms F^0_t = \sigma(X_s: 0\le s\le t)$.  Fix
$\theta_0\in \ms E$ and denote by $\{\ms F_t : t\ge 0\}$ the usual
augmentation of $\{\ms F^0_t : t\ge 0\}$ with respect to
$\bb P^\epsilon_{\theta_0}$. We refer to Section III.9 of \cite{rw}
for a precise definition.

\begin{lemma}
\label{as15}
For each $t\ge 0$, $S_{\ms E} (t)$ is a stopping time with respect to
the filtration $\{\ms F_t\}$. Let $\{\ms G_r : r\ge 0\}$ be the filtration
given by $\ms G_r = \ms F_{S_{\ms E}(r)}$, and let $\tau$ be a stopping time
with respect to $\{\ms G_r\}$. Then, $S_{\ms E}(\tau)$ is a stopping
time with respect to $\{\ms F_t\}$.
\end{lemma}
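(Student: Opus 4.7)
Plan: both claims hinge on the continuity and adaptedness of the clock $T_{\ms E}$, together with the key identity \eqref{60}, which, since $T_{\ms E}$ is continuous and non-decreasing with $T_{\ms E}(0)=0$ and $T_{\ms E}(s)\to\infty$ a.s., is equivalent to the strict form $\{S_{\ms E}(r) < s\} = \{T_{\ms E}(s) > r\}$ for all deterministic $r, s \ge 0$. This strict identity, not the one in \eqref{60}, is what the rest of the argument requires.

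For the first claim, I would write
\[
\{ S_{\ms E}(t) \le s \} \;=\; \bigcap_{n \ge 1} \{ S_{\ms E}(t) < s + 1/n \}
\;=\; \bigcap_{n \ge 1} \{ T_{\ms E}(s + 1/n) > t \}\;.
\]
Each event on the right lies in $\ms F_{s + 1/n}$ because $T_{\ms E}$ is $\{\ms F_t\}$-adapted, so the intersection lies in $\bigcap_n \ms F_{s + 1/n} = \ms F_{s+} = \ms F_s$ by the right-continuity of the usual augmentation.

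For the second claim, I would use the standard dyadic time-change approximation. First, dispose of the case where $\tau$ takes values in a countable set $D \subset [0,\infty]$ via
\[
\{ S_{\ms E}(\tau) \le t \} \;=\; \bigcup_{r \in D} \big(\, \{\tau = r\} \cap \{ S_{\ms E}(r) \le t \}\, \big)\;.
\]
For each $r \in D$, $\{\tau = r\} \in \ms G_r = \ms F_{S_{\ms E}(r)}$, and since $S_{\ms E}(r)$ is a $\{\ms F_t\}$-stopping time by the first claim, the very definition of $\ms F_{S_{\ms E}(r)}$ yields $\{\tau = r\} \cap \{S_{\ms E}(r) \le t\} \in \ms F_t$; the countable union stays in $\ms F_t$. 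For a general $\{\ms G_r\}$-stopping time $\tau$, approximate from above by $\tau_k := \lceil 2^k \tau \rceil / 2^k$. Each $\tau_k$ takes values in a countable set and is a $\{\ms G_r\}$-stopping time because $\{\tau_k \le r\} = \{\tau \le 2^{-k}\lfloor 2^k r\rfloor\} \in \ms G_{2^{-k}\lfloor 2^k r\rfloor} \subseteq \ms G_r$, using the monotonicity of $r \mapsto \ms G_r$ inherited from $S_{\ms E}$. Hence each $S_{\ms E}(\tau_k)$ is a $\{\ms F_t\}$-stopping time by the previous case. Since $\tau_k \downarrow \tau$ and $r \mapsto S_{\ms E}(r)$ is right-continuous and non-decreasing, $S_{\ms E}(\tau_k) \downarrow S_{\ms E}(\tau)$, and the pointwise decreasing limit of $\{\ms F_t\}$-stopping times is itself a $\{\ms F_t\}$-stopping time by
\[
\{ S_{\ms E}(\tau) \le t\} \;=\; \bigcap_{m\ge 1} \bigcup_{k \ge 1} \{ S_{\ms E}(\tau_k) < t + 1/m \}
\;\in\; \bigcap_{m\ge 1}\ms F_{t + 1/m} \;=\; \ms F_t\;.
\]

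The main technical nuisance is the plateau behavior of $T_{\ms E}$ during excursions of $\widehat X_\epsilon$ outside $\ms E$: the naive identity $\{S_{\ms E}(r) \le s\} = \{T_{\ms E}(s) \ge r\}$ fails precisely on these plateaux, which is why one has to work with strict inequalities and absorb a $1/n$-approximation using right-continuity of $\{\ms F_t\}$. Once the correct version of \eqref{60} is in hand, everything else is standard time-change bookkeeping, and the proof can be kept quite short.
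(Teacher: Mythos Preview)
Your proof is correct. For the first claim it is essentially identical to the paper's argument: both pass to the strict event $\{S_{\ms E}(t)<s+1/n\}=\{T_{\ms E}(s+1/n)>t\}$ and then use right-continuity of the augmented filtration.

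For the second claim the two proofs diverge. The paper argues directly, without approximating $\tau$: it writes
\[
\{S_{\ms E}(\tau)<t\}=\{T_{\ms E}(t)>\tau\}=\bigcup_{q\in\bb Q_+}\bigcup_{n\ge 1}\big(\{\tau\le q\}\cap\{S_{\ms E}(q)\le t-1/n\}\big),
\]
observes that each piece lies in $\ms F_{t-1/n}$ by the very definition of $\ms G_q=\ms F_{S_{\ms E}(q)}$, and then closes up with $\{S_{\ms E}(\tau)\le t\}=\bigcap_q\{S_{\ms E}(\tau)<t+q\}$ and right-continuity. Your route instead reduces to discrete-valued $\tau$ via the standard dyadic approximation $\tau_k=\lceil 2^k\tau\rceil/2^k$, uses right-continuity of $r\mapsto S_{\ms E}(r)$ to get $S_{\ms E}(\tau_k)\downarrow S_{\ms E}(\tau)$, and concludes that a decreasing limit of $\{\ms F_t\}$-stopping times is again one. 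Both are sound; the paper's version is a little shorter and stays closer to the key identity \eqref{60}, while your version is the textbook time-change argument and makes explicit the role of right-continuity of $S_{\ms E}$, which the paper's proof does not need.
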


\begin{proof}
Fix $t\ge 0$ and $r\ge 0$. By \eqref{60},
\begin{equation*}
\{S_{\ms E} (t) \le r\} \;=\; \bigcap_{q} \, \{S_{\ms E} (t) < r + q\}
\;=\; \bigcap_{q} \, \{T_{\ms E} (r + q)>t \}\;,
\end{equation*}
where the intersection is carried out over all
$q\in (0,\infty)\cap \bb Q$. By definition of $T_{\ms E}$,
$\{T_{\ms E} (r + q)>t \}$ belongs to $\ms F_{r+q}$. Hence, as the
filtration is right-continuous,
$\{S_{\ms E} (t) \le r\} \in \cap_q \, \ms F_{r+q} = \ms F_{r}$, which
proves the first assertion.

Fix a stopping time $\tau$ with respect to the filtration
$\{\ms G_r\}$. This means that for every $t\ge 0$,
$\{\tau\le t\} \in \ms G_t = \ms F_{S_{\ms E}(t)}$. Hence, for all
$r\ge 0$,
\begin{equation*}
\{\tau\le t\} \,\cap\, \{S_{\ms E}(t) \le r \} \,\in\,  \ms F_{r}\;.
\end{equation*}

We claim that $\{S_{\ms E}(\tau) <t \} \,\in\,  \ms F_{t}$. Indeed, by
\eqref{60}, this event is equal to $\{T_{\ms E} (t)> \tau \}$, which
can be written as
\begin{align*}
& \bigcup_q\, \{\tau \le q \} \, \cap \, \{T_{\ms E} (t) > q \}
\;=\; \bigcup_q\, \{\tau \le q \} \, \cap \, \{S_{\ms E} (q)<t \} \\
&\qquad \;=\; \bigcup_q\, \bigcup_{n\ge 1}
\{\tau \le q \} \, \cap \, \{S_{\ms E} (q) \le t - (1/n)\}\;,
\end{align*}
where the union is carried over all $q\in \bb Q$. By the penultimate
displayed equation, each term belongs to $\ms F_{t-(1/n)} \subset \ms
F_t$, which proves the claim. 

We may conclude. Since 
\begin{equation*}
\{S_{\ms E}(\tau) \le t \} \;=\; \bigcap_{q} \, \{S_{\ms E} (\tau) < t + q\}\;,
\end{equation*}
where the intersection is carried out over all
$q\in (0,\infty)\cap \bb Q$, and since the filtration $\{\ms F_t\}$ is
right continuous, by the previous claim, $\{S_{\ms E}(\tau) \le t \}
\in \ms F_t$.
\end{proof}

Since $S_{\ms E} (t)$ is a stopping time with respect to the
filtration $\{\ms F_t\}$,
\begin{equation*}
Y_\epsilon(t) \;=\; \widehat X_\epsilon (S_{\ms E} (t))\;.
\end{equation*}
is an $\ms E$-valued, Markov process with respect to the filtration
$\ms G_t = \ms F_{S(t)}$.  Let $\Psi: \ms E \to \bb G_{\bs n}=\{1,
\dots, \bs n\}$ be the projection given by
\begin{equation*}
\Psi(\theta) \;=\; \sum_{j=1}^{\bs n} j \, \chi_{\ms E_j} (\theta) \;,
\end{equation*}
and denote by $\bs x_\epsilon(t)$ the projected process which takes
value in $S= \{1, \dots, \bs n\}$ and is defined by
\begin{equation*}
\bs x_\epsilon(t) \;=\; \Psi(Y_\epsilon(t))\;.
\end{equation*}
Denote by $\bb Q^\epsilon_\theta$, $\theta\in \ms E$, the probability
measure on $D(\bb R_+, \ms E)$ induced by the process $Y_\epsilon(t)$
starting from $\theta$, and by $\mb Q^\epsilon_\theta$ the probability
measure on $D(\bb R_+, S)$ induced by the function $\Psi$: $\mb
Q^\epsilon_\theta = \bb Q^\epsilon_\theta \circ \Psi^{-1}$.  Note that
$\mb Q^\epsilon_\theta$ corresponds to the distribution of $\bs
x_\epsilon(t)$ starting from $\Psi(\theta)$.

\begin{theorem}
\label{th01}
Fix $1\le j\le \bs n$ and $\theta_0\in \ms E_j$. The sequence of
measures $\mb Q^\epsilon_{\theta_0}$ converges, as $\epsilon \to 0$,
to the probability measure $\mb Q_j$ introduced below
\eqref{66}.
\end{theorem}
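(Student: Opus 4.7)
The plan is to follow the martingale approach of \cite{bl2, bl7, bl9}: first establish tightness of the sequence $\{\bs x_\epsilon(\cdot)\}$ in $D(\bb R_+, S)$, then identify every limit point with $\bs Q_j$ via the martingale problem for the generator $\bs L$. Since $\bs x_\epsilon$ takes values in a finite set, Aldous' criterion reduces tightness to bounding, uniformly over stopping times $\tau\le T$ and small $\delta>0$, the probability $\bb P^\epsilon_{\theta_0}[\bs x_\epsilon(\tau+\delta)\neq \bs x_\epsilon(\tau)]$; by the strong Markov property applied at hitting times of $\ms E$ this reduces to the probability that $\widehat X_\epsilon$ hits $\breve{\ms E}_j$ before time $\delta$ when started in $\ms E_j$. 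I would obtain this estimate by invoking Proposition \ref{p01} (the capacity-based hitting time bound) together with the sharp capacity asymptotics of Propositions \ref{p04}--\ref{p03}, which in the time scale $e^{H/\epsilon}$ give capacities of order $1$ and therefore the desired $O(\delta)$ bound. In parallel one must show that $T_\Delta(t)\to 0$ in probability, which follows from Chebyshev applied to $\bb E^\epsilon_{\theta_0}[T_\Delta(t)]$ after replacing the initial distribution by the stationary measure $\mu_\epsilon$ (up to an exponentially small Radon--Nikodym cost) and using that $\mu_\epsilon(\Delta)\to 0$ by Proposition \ref{p02} and Lemma \ref{l09}.

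For the characterization of limit points, I would exploit the novel PDE-based strategy announced in the introduction. Fix $g:S\to\bb R$ and define $G_\epsilon:\bb T\to\bb R$ to be the function which is constant on each well, equal to $e^{-H/\epsilon}(\bs L g)(j)$ on $\ms E_j$, and extended arbitrarily (e.g. by zero) outside $\ms E$. By Proposition \ref{as14}, the Poisson equation $L_\epsilon F_\epsilon = G_\epsilon$ admits a bounded solution which is asymptotically constant on each $\ms E_j$ with limit $g(j)$ up to an additive constant independent of $j$. Dynkin's formula then gives that
\begin{equation*}
M_\epsilon(t) \;=\; F_\epsilon(\widehat X_\epsilon(t)) \,-\, F_\epsilon(\widehat X_\epsilon(0))
\,-\, e^{H/\epsilon}\int_0^t (L_\epsilon F_\epsilon)(\widehat X_\epsilon(s))\,ds
\end{equation*}
is an $\{\ms F_t\}$-martingale; evaluating at the time change $S_{\ms E}(t)$ and applying the optional sampling theorem justified by Lemma \ref{as15}, the first two terms become $g(\bs x_\epsilon(t))-g(\bs x_\epsilon(0)) + o(1)$ by the asymptotic constancy of $F_\epsilon$ on wells, while the integral becomes $\int_0^t(\bs L g)(\bs x_\epsilon(s))\,ds + o(1)$ since the diffusion spends a negligible fraction of time in $\Delta$.

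Passing to a subsequential limit $\mb Q^*$ of $\mb Q^\epsilon_{\theta_0}$ (which exists by tightness) and using uniform integrability of $M_\epsilon(S_{\ms E}(t))$ together with continuity of $g$ and $\bs L g$ on the finite set $S$, the martingale property survives in the limit. Hence under any limit point,
\begin{equation*}
g(\bs x(t)) \,-\, g(\bs x(0)) \,-\, \int_0^t (\bs L g)(\bs x(s))\,ds
\end{equation*}
is a martingale for every $g:S\to\bb R$, which is precisely the martingale problem for $\bs L$ and, because $\bs L$ generates a well-posed chain on the finite set $S$, identifies $\mb Q^*=\bs Q_j$ when $\theta_0\in\ms E_j$.

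The chief obstacle is the PDE input, namely Proposition \ref{as14}: showing that solutions of the Poisson equation $L_\epsilon F_\epsilon = G_\epsilon$ for $G_\epsilon$ constant on each well are themselves asymptotically constant on each well. In the reversible setting this can be deduced from Dirichlet principle arguments and the spectral gap of the restricted dynamics \cite{et, st}; here, owing to the non-reversibility of $L_\epsilon$, one must replace the variational characterization by a direct analysis combining the one-dimensional explicit structure (the representations from Section \ref{sec0}) with barrier/comparison arguments inside each landscape. The ancillary issues in the main proof—uniform integrability of $M_\epsilon(S_{\ms E}(t))$, interchange of the limit $\epsilon\to 0$ with the time change $S_{\ms E}$, and the regularity at the boundary of $\ms E$ where $Y_\epsilon$ is singular—are standard once tightness and the Poisson estimate are in hand.
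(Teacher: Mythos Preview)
Your outline matches the paper's two-step scheme (tightness via Aldous plus the enlarged-process capacity bound, then identification of limit points via the Poisson equation of Proposition~\ref{as14} and a time-changed martingale), and the overall logic is sound. A few points deserve correction, however.

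First, your control of $\bb E^\epsilon_{\theta_0}[T_\Delta(t)]$ via a ``Radon--Nikodym cost'' against $\mu_\epsilon$ does not work: the density of the law of $\widehat X_\epsilon$ started at a point with respect to $\mu_\epsilon$ is not exponentially bounded (indeed $m_\epsilon$ is exponentially small on parts of $\ms E_j$). The paper (Lemma~\ref{l06}) instead uses that the hitting time of a fixed neighborhood of a minimum $\mf m_{j,1}$ is $o(1)$, applies the strong Markov property there, and then averages over that neighborhood, where $\mu_\epsilon$ has mass bounded below. Second, the capacity you need in Proposition~\ref{p01} is the \emph{enlarged} capacity $\Cap_{\gamma,\epsilon}[(\ms W_j^c,1),(\bb T,-1)]$, not the ordinary capacities of Propositions~\ref{p04}--\ref{p03}; the paper bounds it directly by a test-function computation (Corollary~\ref{c01}). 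Third, to invoke Proposition~\ref{as14} you must first center $g$ so that $E_{\mu_\epsilon}[\overline g_\epsilon]=0$; the paper does this by subtracting $r(\epsilon)\chi_{\ms E_1}$ with $r(\epsilon)\to 0$ (Assertion~\ref{as17}). The fact that $\overline g_\epsilon$ vanishes on $\Delta$ then makes the time-change of the integral \emph{exact}, so you do not need the ``negligible time in $\Delta$'' argument at that step. Finally, to pass the martingale property through a Skorohod limit you need that the limit has no fixed discontinuity times; the paper proves this as part of Lemma~\ref{l03}, and you should include it.
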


\begin{remark}
\label{rm8}
This is the first example, to our knowledge, that the reduced chain is
an non-reversible, irreducible dynamics.
\end{remark}

\smallskip\noindent{\bf \ref{sec3}.5 Tightness.}  The proof of Theorem \ref{th01}
is divided in two steps. We prove in this subsection that the sequence
$\mb Q^\epsilon_{\theta_0}$ is tight and that all its limit points
fulfill certain conditions. In the next subsection, we prove
uniqueness of limit points.

\begin{lemma}
\label{l03}
For every $1\le j\le \bs n$, $\theta_0\in \ms E_j$, the sequence of
measures $\mb Q^\epsilon_{\theta_0}$ is tight. Moreover, every
limit point $\mb Q^*$ of the sequence $\mb Q^\epsilon_{\theta_0}$ is
such that
\begin{equation*}
\mb Q^* \{ x : x(0) = j\} \;=\; 1 \quad \text{and} \quad 
\mb Q^* \{ x : x(t) \not = x(t-)\}\;=\; 0
\end{equation*}
for every $t>0$.
\end{lemma}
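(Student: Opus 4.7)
The first assertion is immediate: since $\theta_0 \in \ms E_j \subset \ms E$, $T_{\ms E}(0)=0$ and hence $Y_\epsilon(0) = \widehat X_\epsilon(0) = \theta_0$, so $\bs x_\epsilon(0) = j$ almost surely for every $\epsilon>0$. As $x\mapsto x(0)$ is continuous on $D(\bb R_+, S)$, every Skorohod limit $\mb Q^*$ satisfies $\mb Q^*\{x(0)=j\}=1$. For tightness, since $S=\{1,\dots,\bs n\}$ is finite, I plan to verify Aldous' criterion: for every $T>0$,
\begin{equation*}
\lim_{\delta\downarrow 0}\;\limsup_{\epsilon\to 0}\;\sup_{\tau\in\mc T_T}\;\sup_{0\le\gamma\le\delta}\; \mb Q^\epsilon_{\theta_0}\big[\bs x_\epsilon(\tau+\gamma)\neq \bs x_\epsilon(\tau)\big]\;=\;0\;,
\end{equation*}
where $\mc T_T$ is the set of $\{\ms G_t\}$-stopping times bounded by $T$.

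By the strong Markov property of $Y_\epsilon$ (available via Lemma \ref{as15}), the supremum above is controlled by $\sup_{y\in\ms E} \mb Q^\epsilon_y[\bs x_\epsilon(\gamma)\neq\Psi(y)]$, so it suffices to establish the uniform bound $\mb Q^\epsilon_y[\bs x_\epsilon(\gamma)\neq\Psi(y)] \le C\gamma + o_\epsilon(1)$ for every $y\in\ms E$. The event on the left is contained in the event that $\widehat X_\epsilon$ started at $y$ hits $\breve{\ms E}_{\Psi(y)}$ by real time $S_{\ms E}(\gamma)$. Two ingredients give the desired estimate. First, since the wells capture nearly all stationary mass, $\mu_\epsilon(\Delta) \to 0$ by Propositions \ref{l01}--\ref{l02}, which implies that the sped-up time spent outside $\ms E$ is negligible, so $S_{\ms E}(\gamma) = \gamma(1+o_\epsilon(1))$ on an event of probability $1-o_\epsilon(1)$. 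Second, Proposition \ref{p01} applied to the pair $(\ms E_{\Psi(y)}, \breve{\ms E}_{\Psi(y)})$ bounds the hitting-time probability in terms of $\Cap_\epsilon(\ms E_{\Psi(y)},\breve{\ms E}_{\Psi(y)})/\mu_\epsilon(\ms E_{\Psi(y)})$, which by Propositions \ref{p04}--\ref{p03} and \eqref{57} is of order $e^{-H/\epsilon}$. The built-in acceleration $e^{H/\epsilon}$ of $\widehat X_\epsilon$ cancels this factor, producing the announced $O(\gamma)$ bound.

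For the absence of fixed discontinuities at $t>0$, the same uniform estimate with $\tau=t-\delta$ and increment $\delta$ yields $\mb Q^\epsilon_{\theta_0}[\bs x_\epsilon(t)\neq \bs x_\epsilon(t-\delta)] \le C\delta+o_\epsilon(1)$. Choosing $\delta$ from the co-countable set of times at which the canonical evaluations on $D(\bb R_+,S)$ are $\mb Q^*$-a.s.\ continuous, one passes to the limit along the converging subsequence to get $\mb Q^*\{x(t)\neq x(t-\delta)\}\le C\delta$, and then sends $\delta\downarrow 0$ along such values; right-continuity of paths then gives $\mb Q^*\{x(t)\neq x(t-)\}=0$. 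The main obstacle is the uniform hitting-time bound of the previous paragraph: invoking Proposition \ref{p01} requires a judicious choice of enlarged process, and the accompanying control of the excursion time outside $\ms E$ relies on the sharp pre-factor analysis of Section \ref{sec0} to prevent the constants from blowing up.
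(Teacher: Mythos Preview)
Your high-level strategy---Aldous' criterion, strong Markov at $S_{\ms E}(\tau)$, and a decomposition separating the excess of $S_{\ms E}(\tau+\gamma)-S_{\ms E}(\tau)$ over $\gamma$ from the hitting of $\breve{\ms E}_j$---is exactly the paper's route. However, two steps you treat as routine are the heart of the argument and, as you have written them, contain genuine gaps.

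First, your invocation of Proposition~\ref{p01} is incorrect. That proposition does not bound $\bb P^\epsilon_\theta[H_{\breve{\ms E}_j}\le A]$ in terms of the ordinary capacity $\Cap_\epsilon(\ms E_j,\breve{\ms E}_j)$; it takes a single interval $I\ni\theta$ and bounds $\bb P^\epsilon_\theta[H_{I^c}\le A]$ by two terms: (i) $\sup_{\theta'}\bb P^\epsilon_\theta[H_{I^c}<H_{\theta'}]$ over $\theta'$ near a minimum of $V$ in $I$, and (ii) the capacity $\Cap_{\gamma,\epsilon}[(I^c,1),(\bb T,-1)]$ for the \emph{enlarged} process, with $\gamma=A^{-1}$. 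The paper applies this with $I=\ms W_j$ (the full valley, not the well $\ms E_j$), shows (i) vanishes via the explicit one-dimensional equilibrium potential, and estimates (ii) by inserting a test function into the variational formula~\eqref{62}; this is the content of Corollary~\ref{c01}. Propositions~\ref{p04}--\ref{p03} compute a different object and do not enter here. Without this work you have no bound on the hitting probability, and your claimed $O(\gamma)$ rate is unjustified (the paper obtains only $o_a(1)$ after $\epsilon\to0$, which suffices).

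Second, the claim that $S_{\ms E}(\gamma)=\gamma(1+o_\epsilon(1))$ uniformly over starting points $y\in\ms E$ does not follow from $\mu_\epsilon(\Delta)\to 0$ alone: stationarity gives only an averaged bound, whereas after the strong Markov step $y$ may sit on $\partial\ms E_j$. The paper handles this via Lemma~\ref{l06}, whose proof first forces the process into a neighborhood of a deep minimum (using a one-dimensional capacity estimate for $\bb E^\epsilon_\theta[H_{\ms E_j^{(\eta)}}]$) and only then invokes stationarity. Your sketch omits this reduction.
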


\begin{proof}
Fix $\theta_0\in \ms E$.  According to Aldous' criterion, we have to
show that for every $\delta>0$, $R>0$,
\begin{equation*}
\lim_{a_0\to 0} \limsup_{\epsilon \to 0} \, 
\sup \bb Q^\epsilon_{\theta_0} \big[ \, |\Psi(Y(\tau+a)) - 
\Psi(Y(\tau))| >\delta\, \big] \;=\; 0\;,
\end{equation*}
where the supremum is carried over all stopping times $\tau$ bounded
by $R$ and all $0\le a<a_0$.

By definition of the measure $\bb Q^\epsilon_{\theta_0}$ and since
$|\Psi(Y_\epsilon (\tau+a)) - \Psi( Y_\epsilon (\tau))|>\delta$
entails that
$\Psi(Y_\epsilon (\tau+a)) \not = \Psi(Y_\epsilon (\tau))$, the
probability appearing in the previous displayed equation is bounded by
\begin{equation*}
\bb P^\epsilon_{\theta_0} \big[ \Psi(X (S_{\ms E} (\tau+a))) \not = \Psi(X
(S_{\ms E} (\tau)))  \, \big]\;.
\end{equation*}
Fix $b=2a_0$ so that $b-a\ge a_0$. Decompose this probability
according to the event $\{S_{\ms E} (\tau+a) - S_{\ms E} (\tau) > b\}$
and its complement.

Suppose that $S_{\ms E} (\tau+a) - S_{\ms E} (\tau) > b$. In this
case, $ S_{\ms E} (\tau) + b < S_{\ms E} (\tau+a)$, so that
$T_{\ms E}(S_{\ms E} (\tau) + b) \le T_{\ms E}(S_{\ms E} (\tau+a))
= \tau+a$. Hence, as $T_{\ms E}(S_{\ms E} (\tau)) = \tau$,
$T_{\ms E}(S_{\ms E} (\tau) + b) - T_{\ms E}(S_{\ms E} (\tau)) \le a$,
that is,
\begin{equation*}
\int_{S_{\ms E} (\tau)}^{S_{\ms E} (\tau) + b} \chi_{\ms E}(X(s))\,
ds \;\le\; a\;. \quad\text{Equivalently}\;, \quad
\int_{S_{\ms E} (\tau)}^{S_{\ms E} (\tau) + b} \chi_{\Delta}(X(s))\,
ds \;\ge\; b - a\;.
\end{equation*}
By Lemma \ref{as15}, $S_{\ms E} (\tau)$ is a stopping time for the
filtration $\{\ms F_t\}$. Hence, by the strong Markov property and
since $X(S_{\ms E} (t))\in\ms E$ for all $t\ge 0$,
\begin{equation*}
\bb P^\epsilon_{\theta_0} \big[  S_{\ms E} (\tau+a) - S_{\ms E} (\tau)
> b\, \big] \;\le\;
\sup_{\theta\in \ms E} \bb P^\epsilon_{\theta} \Big[  
\int_{0}^{b} \chi_{\Delta}(X(s))\, ds \;\ge\; b - a \, \Big]
\;.
\end{equation*}
By Chebychev inequality and by our choice of $b$, this expression is
less than or equal to
\begin{equation*}
\frac 1{b-a} \, \sup_{\theta\in \ms E} \bb E^\epsilon_{\theta} \Big[  
\int_{0}^{b} \chi_{\Delta}(X(s))\, ds \, \Big] \;\le\;
\frac 1{a_0} \, \sup_{\theta\in \ms E} \bb E^\epsilon_{\theta} \Big[  
\int_{0}^{2a_0} \chi_{\Delta}(X(s))\, ds \, \Big]\;.
\end{equation*}
By Lemma \ref{l06}, this expression vanishes as $\epsilon\to 0$ for
every $a_0>0$.

We turn to the case $\{S_{\ms E} (\tau+a) - S_{\ms E} (\tau) \le b\}$.
On this set we have that
$\{\Psi(X (S_{\ms E} (\tau+a))) \not = \Psi(X (S_{\ms E} (\tau)))\}$
is contained in
$\{\Psi(X (S_{\ms E} (\tau)+c)) \not = \Psi(X (S_{\ms E} (\tau)))$ for
some $0\le c\le b\}$.  By Lemma \ref{as15}, since $S_{\ms E} (\tau)$
is a stopping time for the filtration $\{\ms F_t\}$ and since
$X(S_{\ms E} (t))$ belongs to $\ms E$ for all $t$,
\begin{align*}
& \bb P^\epsilon_{\theta_0} \Big[ \Psi(X (S_{\ms E} (\tau+a))) \not = \Psi(X
(S_{\ms E} (\tau)))  \,,\,
S_{\ms E} (\tau+a) - S_{\ms E} (\tau) \le b \Big] \\
&\quad \le\;
\sup_{\theta\in\ms E} \bb P^\epsilon_{\theta} \Big[ \Psi(X (c)) \not
  = \Psi(\theta) \text{ for
  some } 0\le c\le b \Big] \;.  
\end{align*}
If $\theta\in\ms E_j$, this later event corresponds to the event
$\{H(\breve{\ms E}_j) \le b\}$, where $\breve{\ms E}_j = \cup_{k\not
  =j} \ms E_k$. The supremum is thus bounded by
\begin{equation*}
\max_{1\le j\le \bs n} \sup_{\theta\in\ms E_j} \bb P^\epsilon_{\theta}
\big[\, H(\breve{\ms E}_j) \le b \,\big] \;=\;
\max_{1\le j\le \bs n} \sup_{\theta\in\ms E_j} \bb P^\epsilon_{\theta}
\big[\, H(\breve{\ms E}_j) \le 2a_0 \,\big]  \;.
\end{equation*}
By Corollary \ref{c01}, this expression vanishes as $\epsilon\to 0$
and then $a_0\to 0$. This completes the proof of the tightness.

The same argument shows that for every $t>0$, 
\begin{equation*}
\lim_{a_0\to 0} \limsup_{\epsilon \to 0} 
\mb Q^\epsilon_{\theta_0} \big[ \, x(t-a) \not = x(t) \text{ for some
} 0\le a\le a_0 \big] \;=\; 0\;.
\end{equation*}
Hence, if $\mb Q^*$ is a limit point of the sequence $\mb
Q^\epsilon_{\theta_0}$,
\begin{equation*}
\lim_{a_0\to 0} \mb Q^* \big[ \, x(t-a) \not = x(t) \text{ for some
} 0\le a\le a_0 \big] \;=\; 0\;.
\end{equation*}
This completes the proof of the second assertion of the lemma since
$\{ x : x(t) \not = x(t-)\}\subset \{x : x(t-a) \not = x(t) \text{ for
  some } 0\le a\le a_0\}$ for all $a_0>0$. The claim that $\mb Q^* \{
x : x(0) = j\} = 1$ is clear.
\end{proof}

\smallskip\noindent{\bf \ref{sec3}.6 Uniqueness of limit points.}  The
proof of the uniqueness of limit points of the sequence
$\mb Q^\epsilon_\theta$ relies on a PDE approach to metastability
\cite{et, st}.

\begin{lemma}
\label{l08}
Fix $1\le j\le \bs n$ and $\theta_0\in \ms E_j$. Let $\mb Q^*$ be a
limit point of the sequence $\mb Q^\epsilon_{\theta_0}$. Then, under
$\mb Q^*$, for every $\bs F: S \to \bb R$,
\begin{equation*}
\bs F(x(t)) \;-\; \int_0^t (\bs L \bs F)(x(s))\, ds
\end{equation*}
is a martingale.
\end{lemma}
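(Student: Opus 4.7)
The argument follows the PDE approach of \cite{et,st}: we lift the desired martingale identity from the reduced chain $\bs X(t)$ to $\widehat X_\epsilon(t)$ by means of a suitably chosen solution of a Poisson equation on $\bb T$, and then use the time-change formula to pass to the trace $Y_\epsilon$.

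Fix $\bs F: S \to \bb R$ and set $\bs G = \bs L \bs F$. By Lemma \ref{ls01}, $\sum_{j=1}^{\bs n} \bs G(j)\, \bs \mu(j) = 0$. The first step is to choose $g_\epsilon : \bb T \to \bb R$, uniformly bounded in $\epsilon$, such that $g_\epsilon(\theta) = \bs G(j)$ for every $\theta \in \ms E_j$, and $\int g_\epsilon \, d\mu_\epsilon = 0$. This is achievable because the leading-order contribution of the integral over $\ms E$ already vanishes by the previous identity combined with $\mu_\epsilon(\ms E_j) = [1+o(1)]\, \bs \mu(j)\, e^{-H/\epsilon}/Z$ from Proposition \ref{l02} and Lemma \ref{l09}, and a vanishing correction outside $\ms E$ can be added. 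With this solvability condition in force, we let $F_\epsilon$ be the solution of $\widehat L_\epsilon F_\epsilon = g_\epsilon$. The core technical ingredient, Proposition \ref{as14}, asserts that $F_\epsilon$ can be normalized so as to be uniformly bounded in $\epsilon$ and asymptotically constant on each well $\ms E_j$ with limiting value $\bs F(j)$.

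Next, by Dynkin's formula applied to $\widehat X_\epsilon$, the process $r \mapsto F_\epsilon(\widehat X_\epsilon(r)) - F_\epsilon(\widehat X_\epsilon(0)) - \int_0^r g_\epsilon(\widehat X_\epsilon(u))\, du$ is a $\bb P^\epsilon_{\theta_0}$-martingale. Applying optional stopping (after a routine localization via $\tau_t \wedge n$, justified by the uniform $L^\infty$ bounds on $F_\epsilon$ and $g_\epsilon$) at the stopping time $\tau_t := S_{\ms E}(t)$ (which is a stopping time by Lemma \ref{as15}), and splitting the integral according to whether $\widehat X_\epsilon(u) \in \ms E$ or $\widehat X_\epsilon(u) \in \Delta$, we find that the time-change $u \mapsto T_{\ms E}(u)$ converts the $\ms E$-part into
\begin{equation*}
\int_0^{\tau_t} g_\epsilon(\widehat X_\epsilon(u))\, \chi_{\ms E}(\widehat X_\epsilon(u))\, du \;=\; \int_0^t g_\epsilon(Y_\epsilon(u))\, du \;=\; \int_0^t (\bs L \bs F)(\bs x_\epsilon(u))\, du\;,
\end{equation*}
while the $\Delta$-part is bounded in absolute value by $\|g_\epsilon\|_\infty\, (\tau_t - t) = \|g_\epsilon\|_\infty \int_0^{\tau_t} \chi_\Delta(\widehat X_\epsilon(u))\, du$, whose expectation tends to $0$ as $\epsilon \to 0$ by Lemma \ref{l06}.

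Collecting everything, for each $0 \le s < t$ and each bounded $\ms G_s$-measurable functional $\Phi$, optional stopping between $\tau_s$ and $\tau_t$ yields
\begin{equation*}
\lim_{\epsilon \to 0} \bb E^\epsilon_{\theta_0}\Big[ \Big(F_\epsilon(Y_\epsilon(t)) - F_\epsilon(Y_\epsilon(s)) - \int_s^t (\bs L \bs F)(\bs x_\epsilon(r))\, dr\Big) \, \Phi \Big] \;=\; 0\;,
\end{equation*}
and Proposition \ref{as14} lets us replace $F_\epsilon \circ Y_\epsilon$ by $\bs F \circ \bs x_\epsilon$ with an $o(1)$ error. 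Passing to the limit along any subsequence along which $\mb Q^\epsilon_{\theta_0} \to \mb Q^*$, using Lemma \ref{l03} to ensure that the evaluation maps at $s$ and $t$ are $\mb Q^*$-a.s.\ continuous on $D(\bb R_+, S)$, delivers the martingale identity under $\mb Q^*$. The main obstacle is Proposition \ref{as14}; the rest is bookkeeping built around Dynkin's formula, optional stopping, and the trace time-change.
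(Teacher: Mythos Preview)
Your overall architecture is exactly that of the paper: construct a right-hand side with $\mu_\epsilon$-mean zero, solve the Poisson equation via Proposition~\ref{as14}, use Dynkin's formula, stop at $S_{\ms E}(t)$ using Lemma~\ref{as15}, time-change the integral, and pass to the limit using the continuity-at-fixed-times statement of Lemma~\ref{l03}. So the strategy is correct.

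There is, however, a genuine gap in your construction of $g_\epsilon$. You propose to achieve $\int g_\epsilon\, d\mu_\epsilon = 0$ by adding ``a vanishing correction outside $\ms E$''. This does not work: the set $\Delta = \bb T\setminus \ms E$ carries only exponentially small $\mu_\epsilon$-mass (since $V>0$ on $\Delta$, cf.\ Proposition~\ref{p02}), whereas $E_{\mu_\epsilon}[g]$ is only $o(1)$ at polynomial rate (the error comes from the Laplace approximation in Assertion~\ref{as17}). Hence any correction supported on $\Delta$ that kills the mean must have sup-norm of order $E_{\mu_\epsilon}[g]/\mu_\epsilon(\Delta)$, which is exponentially \emph{large}, not vanishing. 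With such a $g_\epsilon$ you cannot invoke Proposition~\ref{as14} (whose statement and proof are written for the specific $\overline g_\epsilon$ supported on $\ms E$), and your bound on the $\Delta$-part by $\|g_\epsilon\|_\infty(\tau_t-t)$ blows up.

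The paper fixes this by placing the correction \emph{inside} $\ms E$, namely $\overline g_\epsilon = g - r(\epsilon)\,\chi_{\ms E_1}$ with $r(\epsilon)=E_{\mu_\epsilon}[g]/\mu_\epsilon(\ms E_1)\to 0$. This choice has two payoffs: (i) $\overline g_\epsilon$ vanishes on $\Delta$, so the time-change $\int_0^{S_{\ms E}(t)}\overline g_\epsilon(\widehat X_\epsilon)\,ds=\int_0^t \overline g_\epsilon(Y_\epsilon)\,ds$ is \emph{exact} and no $\Delta$-part ever appears (Lemma~\ref{l06} is not needed here); (ii) on $\ms E$ one has $\overline g_\epsilon - g = -r(\epsilon)\chi_{\ms E_1}\to 0$ uniformly, so the replacement by $\bs G\circ\bs x_\epsilon$ costs only $o(1)$. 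A secondary issue in your write-up is that, even had the $\Delta$-correction been bounded, Lemma~\ref{l06} controls $\int_0^T\chi_\Delta$ for \emph{fixed} $T$, not $\int_0^{\tau_t}\chi_\Delta$ with the random $\tau_t=S_{\ms E}(t)$; an extra argument would have been needed. With the paper's choice of $\overline g_\epsilon$ this difficulty simply does not arise, and the remainder of your outline coincides with the paper's proof.
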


\begin{proof}
Fix $1\le j\le \bs n$, $\theta_0\in \ms E_j$ and a function $\bs F: S
\to \bb R$.  Let $f_\epsilon: \bb T \to \bb R$ be the function given
by Proposition \ref{as14}. By this result,
\begin{equation*}
M_\epsilon(t) \;=\; f_\epsilon (\widehat X_\epsilon(t)) \;-\; \int_0^t 
(\widehat L f_\epsilon\bs)(\widehat X_\epsilon(s))\, ds \;=\;
f_\epsilon (\widehat X_\epsilon(t)) \;-\; \int_0^t 
\overline{g}_\epsilon (\widehat X_\epsilon(s))\, ds
\end{equation*}
is a martingale with respect to the filtration $\ms F_t$ and the
measure $\bb P^\epsilon_{\theta_0}$. Since $\{S_{\ms E}(t): t\ge 0\}$ are
stopping times with respect to $\ms F_t$,
\begin{equation*}
\widehat M_\epsilon(t) \;=\; M_\epsilon(S_{\ms E}(t)) \;=\;  
f_\epsilon (Y_\epsilon(t)) \;-\; \int_0^{S_{\ms E}(t)} 
\overline{g}_\epsilon (\widehat X_\epsilon(s))\, ds
\end{equation*}
is a martingale with respect to $\ms G_t$.  Since $
\overline{g}_\epsilon$ vanishes on $\ms E^c$, by a change of
variables,
\begin{equation*}
\int_0^{S_{\ms E}(t)} \overline{g}_\epsilon (\widehat X_\epsilon(s))\, ds \;=\;
\int_0^{S_{\ms E}(t)} \chi_{\ms E} (\widehat X_\epsilon(s)) \,
\overline{g}_\epsilon (\widehat X_\epsilon(s))\, ds \;=\;
\int_0^{t} \overline{g}_\epsilon (\widehat X_\epsilon(S_{\ms E}(s)))\, ds\;.
\end{equation*}
Hence,
\begin{equation*}
\widehat M_\epsilon(t) \;=\; 
f_\epsilon (Y_\epsilon(t)) \;-\; 
\int_0^{t} \overline{g}_\epsilon (Y_\epsilon (s))\, ds
\end{equation*}
is a $\{\ms G_t\}$-martingale under the measure $\bb
Q^\epsilon_{\theta_0}$.

Since $\lim_{\epsilon\to 0} r(\epsilon)=0$, $\overline{g}_\epsilon -
g$ vanishes uniformly in $\ms E$ as $\epsilon \to 0$. By Proposition
\ref{as14}, the same holds for $\widehat f_\epsilon - f$. Hence, since
$Y_\epsilon(s) \in \ms E$ for all $s\ge 0$, we may replace in the
previous equation $g_\epsilon$, $f_\epsilon$ by $g$, $f$,
respectively, at a cost which vanishes as $\epsilon\to 0$. Therefore,
\begin{equation*}
\widehat M_\epsilon(t) \;=\; 
f (Y_\epsilon(t)) \;-\; \int_0^{t} g (Y_\epsilon (s))\, ds \;+\;
 o(1)\, 
\end{equation*}
is a $\{\ms G_t\}$-martingale under the measure $\bb
Q^\epsilon_{\theta_0}$.

Since $f$ and $g$, $f (Y_\epsilon(t)) = \bs F(\bs
x_\epsilon(t))$, $g (Y_\epsilon(t)) = \bs G (\bs x_\epsilon(t))$.  By
Lemma \ref{l03}, all limit points of the sequence $\mb
Q^\epsilon_{\theta_0}$ are concentrated on trajectories which are
continuous at any fixed time with probability $1$. We may, therefore,
pass to the limit and conclude that $\bs F(x(t)) \;-\; \int_0^t (\bs L
\bs F)(x(s))\, ds$ is a martingale under $\mb Q^*$.
\end{proof}

\begin{proof}[Proof of Theorem \ref{th01}]
The assertion is a consequence of Lemma \ref{l03}, Lemma \ref{l08} and
the fact that there is only one measure $\mb Q$ on $D(\bb R_+, S)$
such that $\mb Q[x(0)=j]=1$ and 
\begin{equation*}
\bs F(x(t)) \;-\; \int_0^t (\bs L \bs F)(x(s))\, ds
\end{equation*}
is a martingale for all $\bs F : S\to\bb R$.
\end{proof}

\smallskip\noindent{\bf \ref{sec3}.7. Proof of Lemma \ref{ls01}.} We
have seen in Subsection \ref{sec3}.3 that the jump rates depend on the
position of the valley in the landscape. If the valley is the
left-most valley, it jumps only to the right. We need therefore a
notation to indicate if a point $j\in S$ is the index of a left-most
valley or not.

Recall that the wells $\ms W_j$ which belong to the same landscape are
represented as $\ms W_{a,1}, \dots , \ms W_{a,n_a}$. We may thus
associate each $j\in S$ to a pair $(a,\ell)$, where $a \in \{1, \dots,
\bs p\}$ represents the landscape and $\ell\in \{1, \dots, n_a\}$ the
position in the landscape. Hence, $S$ can also be written as
\begin{equation*}
S\;=\; \big\{ (1,1), \dots (1,n_1), \dots, (\bs p, 1) , 
\dots, (\bs p, n_{\bs p}) \big\} \;.
\end{equation*}
 
Consider the subset $S_a = \{(a,1) , \dots, (a,n_a)\}$ of $S$.  Recall
from \eqref{59} the notation $\bs \sigma[j,\,j+1]=\bs
\sigma_{j,j+1}$. For $1\le j < n_a$, the Markov chain $\bs X(t)$
defined in Section \ref{sec3}.3 jumps $(a,j)$ to $(a,j+1)$ at rate
$\{\bs \pi(a,j) \, \bs \sigma[(a,j)\,,\, (a,j+1)]\}^{-1}$ and from
$(a,j+1)$ to $(a,j)$ at rates $\{\bs \pi(a,j+1)\, \bs
\sigma[(a,j)\,,\, (a,j+1)]\}^{-1}$.  Additionally, it jumps from
$(a,n_a)$ to $(a+1,1)$. If we disregard this last jump, on the set
$S_a$, the Markov chain behaves as a reversible Markov chain whose
equilibrium state is $\bs \pi$. The additional jump from $(a,n_a)$ to
$(a+1,1)$ changes the stationary state by the multiplicative factor
$G_1(\mf m_{a,j,1})$. This is the content of the assertion below.

\begin{proof}[Proof of Lemma \ref{ls01}]
Consider a function $F\colon S\to \bb R$, and recall that a point
$j\in S$ is also represented as $(a,k)$. With this notation, $E_{\bs
  \mu}[\bs LF]$ becomes
\begin{align*}
& \sum_{a=1}^{\bs p} \bs\mu(a,n_a)\, \frac 1{\bs \pi(a,n_a) \,
  \bs \sigma[(a,n_a)\,,\, (a+1,1)]} \, \big[F (a+1,1) - F (a,n_a)\big] \\
& \quad +\; \sum_{a=1}^{\bs p} \sum_{j=1}^{n_a-1} \bs\mu(a,j)\, \frac 1{\bs \pi(a,j) \,
  \bs \sigma[(a,j)\,,\, (a,j+1)]} \, \big[F (a,j+1) - F (a,j)\big] \\
& \qquad +\; \sum_{a=1}^{\bs p} \sum_{j=1}^{n_a-1} \bs\mu(a,j+1)\, \frac 1{\bs \pi(a,j+1) \,
  \bs \sigma[(a,j)\,,\, (a,j+1)]} \, \big[F (a,j) - F (a,j+1)\big]\;.
\end{align*}
Since the first summation is performed modulo $\bs p$ and since, by
\eqref{57}, $\bs\mu(a,j)/\bs \pi(a,j) = (1/Z) G_1(\mf m_{a,j,1})$,
a change of variables yields that the first sum can be rewritten as
\begin{align*}
\frac 1Z\, \sum_{a=1}^{\bs p} \frac{G_1(\mf m_{a-1,n_{a-1},1})} 
{\bs \sigma[(a-1,n_{a-1})\,,\, (a,1)]} \, F (a,1) 
\;-\; \frac 1Z\, \sum_{a=1}^{\bs p} \frac {G_1 (\mf m_{a,n_a,1})} 
{\bs \sigma[(a,n_a)\,,\, (a+1,1)]} \, F (a,n_a)\;.
\end{align*}
By definition of $G_1$ and $\bs \sigma_{j,j+1}$, the previous ratios are
equal to $1$ and the difference becomes
\begin{equation}
\label{71}
\frac 1Z\, \sum_{a=1}^{\bs p} \, F (a,1) 
\;-\; \frac 1Z\, \sum_{a=1}^{\bs p} F (a,n_a)\;.
\end{equation}

Use the identity $\bs\mu(a,j)/\bs \pi(a,j) = (1/Z) \, G_1(\mf
m_{a,j,1})$ to rewrite the last two terms of the first displayed
formula of this proof as
\begin{align*}
& \frac 1Z\, \sum_{a=1}^{\bs p} 
\sum_{j=2}^{n_a}   F (a,j)\, 
\Big\{ \frac {G_1(\mf m_{a,j-1,1})}{\bs \sigma[(a,j-1)\,,\, (a,j)]} 
- \frac {G_1(\mf m_{a,j,1})}{\bs \sigma[(a,j-1)\,,\, (a,j)]} \Big\} \\
& \quad \;+\; 
\frac 1Z\, \sum_{a=1}^{\bs p} 
\sum_{j=1}^{n_a-1}   F (a,j)
\Big\{ \frac {G_1(\mf m_{a,j+1,1})}{\bs \sigma[(a,j)\,,\, (a,j+1)]} 
\;-\;  \frac {G_1(\mf m_{a,j,1})}{\bs \sigma[(a,j)\,,\, (a,j+1)]} \Big\}\;. 
\end{align*}
By definition of $G_1$ and $\bs \sigma$, $G_1(\mf m_{a,j-1,1}) -
G_1(\mf m_{a,j,1}) = \bs \sigma[(a,j-1)\,,\, (a,j)]$. This sum is thus
equal to
\begin{align*}
\frac 1Z\, \sum_{a=1}^{\bs p} 
\sum_{j=2}^{n_a}   F (a,j)\, 
\;-\; \frac 1Z\, \sum_{a=1}^{\bs p} 
\sum_{j=1}^{n_a-1}   F (a,j) 
\;=\; \frac 1Z\, \sum_{a=1}^{\bs p} 
\{F (a,n_a) \,-\, F (a,1)\} \;. 
\end{align*}
This term cancels \eqref{71}, which completes the proof of the
assertion. 
\end{proof}

The same proof yields the next claim, which is needed later. 

\begin{lemma}
\label{la02}
Fix a function $F: S\to \bb R$. For every $1\le a\le p$ and every
$1\le \ell\le n_a$,
\begin{align*}
F(a,\ell) \,-\, F(1,1) \; &=\;
\sum_{b=1}^{a-1} \sum_{k=1}^{n_b} (\bs LF)(b,k)\, \bs \pi(b,k)\, G_1 (\mf m_{b,k,1}) \\
\;& +\; \sum_{k=1}^{\ell-1} (\bs LF)(a,k)\, \bs \pi(a,k)\, \big[G_1
(\mf m_{a,k,1}) - G_1(\mf m_{a,\ell,1})] \;.
\end{align*}
\end{lemma}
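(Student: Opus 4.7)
The plan is to prove the identity by induction on the pair $(a,\ell)$ in lexicographic order. The base case $(a,\ell)=(1,1)$ is immediate since both sides of the claimed formula vanish. The inductive step splits into a within-landscape advance $(a,\ell)\to(a,\ell+1)$ for $\ell<n_a$, and a landscape-crossing advance $(a,n_a)\to(a+1,1)$.

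The first task is to establish two ``discrete Green function'' identities that follow directly from the generator rates \eqref{70}. The first, valid for $\ell<n_a$, reads
\begin{equation*}
\text{(A)}\qquad F(a,\ell+1)-F(a,\ell)\;=\;\bs\sigma[(a,\ell),(a,\ell+1)]\,\sum_{k=1}^{\ell}(\bs L F)(a,k)\,\bs\pi(a,k),
\end{equation*}
and the second is
\begin{equation*}
\text{(B)}\qquad F(a+1,1)-F(a,n_a)\;=\;\bs\sigma[(a,n_a),(a+1,1)]\,\sum_{k=1}^{n_a}(\bs L F)(a,k)\,\bs\pi(a,k).
\end{equation*}
Both are instances of the same telescoping cascade as in the proof of Lemma~\ref{ls01}: expanding $(\bs L F)(a,k)\,\bs\pi(a,k)$ via \eqref{70} produces a forward term $\bs\sigma^{-1}[(a,k),(a,k+1)]\,[F(a,k+1)-F(a,k)]$ plus a backward term $\bs\sigma^{-1}[(a,k-1),(a,k)]\,[F(a,k-1)-F(a,k)]$, the backward term being absent at $k=1$ because $(a,1)$ is the leftmost valley of landscape $a$ and admits no backward jump by \eqref{i02}. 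Summing on $k$ cancels all intermediate contributions, yielding (A) when the sum stops at $\ell<n_a$ (only the forward term at $(a,\ell)$ survives) and (B) when it runs to $n_a$ (where the forward jump lands at $(a+1,1)$).

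Denoting the right-hand side of the lemma by $T(a,\ell)$, direct subtraction yields
\begin{equation*}
T(a,\ell+1)-T(a,\ell)\;=\;\bigl[G_1(\mf m_{a,\ell,1})-G_1(\mf m_{a,\ell+1,1})\bigr]\sum_{k=1}^{\ell}(\bs L F)(a,k)\,\bs\pi(a,k),
\end{equation*}
\begin{equation*}
T(a+1,1)-T(a,n_a)\;=\;G_1(\mf m_{a,n_a,1})\sum_{k=1}^{n_a}(\bs L F)(a,k)\,\bs\pi(a,k).
\end{equation*}
Two algebraic facts close the argument: from the definition of $G_1$ in \eqref{27b} together with \eqref{59} one obtains $G_1(\mf m_{a,\ell,1})-G_1(\mf m_{a,\ell+1,1})=\bs\sigma[(a,\ell),(a,\ell+1)]$ as well as the boundary identity $G_1(\mf m_{a,n_a,1})=\bs\sigma[(a,n_a),(a+1,1)]$; both already appeared as the ``ratios equal to $1$'' observation in the proof of Lemma~\ref{ls01}. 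Plugging these into the two differences above and invoking (A) and (B) matches $T(a,\ell+1)-T(a,\ell)$ with $F(a,\ell+1)-F(a,\ell)$ and $T(a+1,1)-T(a,n_a)$ with $F(a+1,1)-F(a,n_a)$, closing the induction.

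The main obstacle is the bookkeeping associated with the non-reversibility of $\bs X(t)$: the chain is a concatenation of reversible birth-death blocks, one per landscape, welded together by unidirectional crossing jumps from $(a,n_a)$ to $(a+1,1)$, so the leftmost site $(a,1)$ of each landscape carries only a forward rate. One must verify that the telescoping in (A) and (B) terminates cleanly at this asymmetric left boundary, and that the $G_1$-weights in the final formula---rather than the $\bs\pi$-weights that would suffice within a single reversible block---correctly absorb the crossing rates.
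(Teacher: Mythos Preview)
Your proof is correct and follows essentially the same route as the paper, which simply states that ``the same proof'' as Lemma~\ref{ls01} yields the claim. You have made that implicit argument explicit: the telescoping identities (A) and (B) are precisely the partial-sum versions of the cancellations carried out in the proof of Lemma~\ref{ls01}, and the two $G_1$-identities you invoke are exactly the ``ratios equal to $1$'' observations used there. Organizing the computation as an induction on $(a,\ell)$ rather than as a direct partial telescoping is a cosmetic difference only.
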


\section{Hitting times estimates via enlarged processes}
\label{sec7}

We prove in this section an upper bound for the probability of the
transition time between wells to be small.  This estimate plays a
central role in the proof of the tightness of a sequence of metastable
processes. The argument presented below is absolutely general and does
not rely on the one-dimensionality of the process.

The argument is based on an enlargement of the process $X_\epsilon(t)$.
Fix $\gamma>0$, let $\bb T_2 = \bb T \times \{-1,1\}$, and consider
the process
$\bs X^\gamma_\epsilon(t) = (X^\gamma_\epsilon(t), \sigma(t))$ on
$\bb T_2$ whose generator $L^\gamma_\epsilon$ is given by
\begin{equation*}
(L^\gamma_\epsilon f)(\theta,\sigma) \;=\; 
(\widehat L_\epsilon f)(\theta,\sigma) \;+\; \gamma \, 
[f(\theta,-\, \sigma) - f(\theta,\sigma)]\;.
\end{equation*}
In the first term on the right hand side, the derivatives act only on
the first coordinate. The process $\bs X^\gamma_\epsilon(t)$ is named
the enlarged process. The first coordinate evolves as the original
process, while the second one, independently from the first, jumps from
$\pm 1$ to $\mp 1$ at rate $\gamma$.

Denote by $\bb P^{\gamma,\epsilon}_{(\theta, \sigma)}$ the probability
measure on $D(\bb R_+, \bb T_2)$ induced by the Markov process
$\bs X^\gamma_\epsilon$ starting from $(\theta, \sigma)$. It is clear
that the measure $\mu^\gamma_\epsilon$, given by
\begin{equation*}
\int_{\bb T_2} f(\theta, \sigma)\, \mu^\gamma_\epsilon (d\theta,
d\sigma) \;=\; \frac 12\, \int_{\bb T} f(\theta, 1)\, 
\mu_\epsilon (d\theta) \;+\;
\frac 12\, \int_{\bb T} f(\theta, -1)\, 
\mu_\epsilon (d\theta)\;,
\end{equation*}
is the unique stationary state of the process $\bs X^\gamma_\epsilon$.

Fix an open interval $I$ of $\bb T$, and let
$(I^c,1) = \{(\theta, \sigma) \in \bb T_2 : \theta\in I^c \,,\,
\sigma=1\}$,
$(\bb T,-1) = \{(\theta, \sigma) \in \bb T_2 : \sigma=-1\}$.  Denote
by $h_I:\bb T \to \bb R_+$ the equilibrium potential between $(I^c,1)$ and
$(\bb T,-1)$:
\begin{equation}
\label{63}
h_I(\theta) \;=\; \bb P^{\gamma,\epsilon}_{(\theta, 1)} \big[
H_{(I^c,1)} \le H_{(\bb T,-1)} \big]\;,
\end{equation}
and by $\Cap_{\gamma,\epsilon} [ (I^c,1), (\bb T,-1)]$ the capacity
between the sets $(I^c,1)$, $(\bb T,-1)$, which is given by the energy
of $h_I$:
\begin{equation}
\label{62}
\Cap_{\gamma,\epsilon} [ (I^c,1), (\bb T,-1)] \;=\;
\frac 1{2} \, \epsilon\, e^{H/\epsilon}\, \int_I (\partial_\theta
h_I(\theta))^2 \, \mu_\epsilon(d\theta) 
\;+\; \frac\gamma{2}\, \int_I h_I(\theta)^2 \, \mu_\epsilon(d\theta) \;.
\end{equation}

\begin{proposition}
\label{p01}
Let $I$ be an open interval of $\bb T$, $\theta\in I$. Then, for every
$A>0$ , $\mf m\in I$ and $\eta>0$ such that
$(\mf m-\eta, \mf m+\eta) \subset I$,
\begin{align*}
\bb P^\epsilon_\theta \big[ H_{I^c} \le A \big] \; & \le\; 
\sup_{\mf m-\eta \le\theta'\le \mf m+\eta} \bb P^\epsilon_{\theta} \big[ H_{I^c} <
H_{\theta'} \big] \\
& +\;  \frac {2\, e\, A}{\mu_\epsilon(\mf m-\eta, \mf
  m+\eta)}\, \Cap_{\gamma,\epsilon} [ (I^c,1), (\bb T,-1)] \;,
\end{align*}
where $\gamma= A^{-1}$.
\end{proposition}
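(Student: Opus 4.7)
The plan is to combine a comparison of the hitting probability with the Laplace transform $h_I(\theta)$, a strong Markov step that transfers the starting point into $[\mf m-\eta,\mf m+\eta]$, and an integration-by-parts identity linking $\int_I h_I\,d\mu_\epsilon$ with $\Cap_{\gamma,\epsilon}$.

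First I would exploit that in $\bs X^\gamma_\epsilon$ the $\sigma$-coordinate is an independent Poisson clock of rate $\gamma$, so if $\tau$ denotes its first flip then $h_I(\theta) = \bb P^{\gamma,\epsilon}_{(\theta,1)}[H_{I^c} \le \tau] = \bb E^\epsilon_\theta\bigl[e^{-\gamma H_{I^c}}\bigr]$. Chernoff's inequality with $\gamma A=1$ therefore yields, for every $\theta'\in\bb T$, the bound $\bb P^\epsilon_{\theta'}[H_{I^c} \le A] \le e^{\gamma A}\, h_I(\theta') = e\, h_I(\theta')$. Next, for any $\theta'\in I$, I would split $\{H_{I^c}\le A\}$ according to whether $H_{\theta'}\le H_{I^c}$; applying the strong Markov property at $H_{\theta'}$ on the second event gives
\begin{equation*}
\bb P^\epsilon_\theta[H_{I^c}\le A] \;\le\; \bb P^\epsilon_\theta[H_{I^c}<H_{\theta'}] \;+\; \bb P^\epsilon_{\theta'}[H_{I^c}\le A] \;\le\; \bb P^\epsilon_\theta[H_{I^c}<H_{\theta'}] + e\,h_I(\theta')\;.
\end{equation*}
Averaging this inequality against $\mu_\epsilon$ over $\theta'\in[\mf m-\eta,\mf m+\eta]$ reduces the proposition to the identity $\int_I h_I\, d\mu_\epsilon \le 2A\,\Cap_{\gamma,\epsilon}$, since $\int_{\mf m-\eta}^{\mf m+\eta} h_I \, d\mu_\epsilon \le \int_I h_I \, d\mu_\epsilon$.

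To establish this identity I would use that the harmonicity $L^\gamma_\epsilon h_I(\theta,1)=0$ for $\theta\in I$ is equivalent to $L_\epsilon h_I = \gamma e^{-H/\epsilon} h_I$ on $I$, together with $h_I\equiv 1$ on $\partial I=\{\theta_L,\theta_R\}$. Integrating this equality against $d\mu_\epsilon$ and integrating the drift term by parts using formula \eqref{12} for $m_\epsilon'$ (the resulting $R_\epsilon$ contribution cancels because $h_I$ coincides at both endpoints), one obtains
\begin{equation*}
\gamma\, \int_I h_I \, d\mu_\epsilon \;=\; \epsilon\, e^{H/\epsilon}\, \bigl\{h_I'(\theta_R^-)\, m_\epsilon(\theta_R) \,-\, h_I'(\theta_L^+)\, m_\epsilon(\theta_L)\bigr\}\;.
\end{equation*}
A parallel computation multiplying the harmonicity equation by $h_I$ and using the same integration by parts expresses the Dirichlet form in \eqref{62} as
\begin{equation*}
2\, \Cap_{\gamma,\epsilon}[(I^c,1),(\bb T,-1)] \;=\; \epsilon\, e^{H/\epsilon}\, \bigl\{h_I'(\theta_R^-)\, m_\epsilon(\theta_R) \,-\, h_I'(\theta_L^+)\, m_\epsilon(\theta_L)\bigr\}\;.
\end{equation*}
Comparing the two displays gives $\gamma \int_I h_I\, d\mu_\epsilon = 2\,\Cap_{\gamma,\epsilon}$, i.e.\ $\int_I h_I\, d\mu_\epsilon = 2A\,\Cap_{\gamma,\epsilon}$, which together with the averaging step finishes the proof.

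The principal obstacle is the integration-by-parts identity in the preceding paragraph: since $X_\epsilon$ is non-reversible, one has to rely on the explicit formula \eqref{12} for $m_\epsilon'$ to handle the drift term, and it is crucial that \emph{the same} boundary flux $\epsilon e^{H/\epsilon}\{h_I'(\theta_R^-) m_\epsilon(\theta_R) - h_I'(\theta_L^+) m_\epsilon(\theta_L)\}$ appear in both computations — once from integrating $L_\epsilon h_I$ against $d\mu_\epsilon$ (the cancellation of the $R_\epsilon$ term requiring $h_I(\theta_L)=h_I(\theta_R)$), and once from the Dirichlet energy (the cancellation of the $R_\epsilon$ term requiring $h_I(\theta_L)^2=h_I(\theta_R)^2$). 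It is precisely this coincidence that converts the $L^2$-bound $\gamma\int h_I^2 d\mu\le 2\Cap$ into the sharper $L^1$-bound $\gamma\int h_I d\mu= 2\Cap$ that powers the linear dependence on $\Cap_{\gamma,\epsilon}$ in the statement.
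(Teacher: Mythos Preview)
Your proof is correct and follows essentially the same route as the paper's: the strong-Markov splitting is exactly Assertion~\ref{l07}, your Chernoff bound $\bb P^\epsilon_{\theta'}[H_{I^c}\le A]\le e\,h_I(\theta')$ is the Laplace-transform reformulation of Assertion~\ref{l04} combined with \eqref{61}, and the identity $\gamma\int_I h_I\,d\mu_\epsilon = 2\,\Cap_{\gamma,\epsilon}$ is Assertion~\ref{l05}. The only stylistic difference is in the last step: the paper multiplies the harmonicity equation by $1-h_I$ and integrates once, which directly produces $\epsilon e^{H/\epsilon}\int_I (h_I')^2\,d\mu_\epsilon = \gamma\int_I(1-h_I)h_I\,d\mu_\epsilon$ and hence the identity in a single stroke; you instead perform two separate integrations by parts (against $1$ and against $h_I$) and match the resulting boundary fluxes, which is algebraically the same computation decomposed into its two halves.
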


\begin{remark}
\label{rmi1}
We will select later $I$ as a valley and $\mf m$ as a minimum in $I$.
\end{remark}

\begin{remark}
\label{rm4}
Let $I$ be an open interval of $\bb T$.  The same arguments show that
for every $A>0$ , $J\subset I$,
\begin{align*}
\frac 1{\mu_\epsilon (J)} \int_J \bb P^\epsilon_\theta \big[ H_{I^c} \le A
  \big] \, \mu_\epsilon (d\theta)
\;\le\;   \frac {2\, e\, A}{\mu_\epsilon(J)}\, 
\Cap_{\gamma,\epsilon} [ (I^c,1), (\bb T,-1)] \;,
\end{align*}
where $\gamma= A^{-1}$. But the proof does not use the
one-dimensionality of the process, that is, the fact that the process
visit points. We leave the proof of this remark to the reader.
\end{remark}

The proof of Proposition \ref{p01} relies on an idea taken from
\cite{bl9}, and it is divided in several assertions.

\begin{asser}
\label{l04}
Let $I$ be an open interval of $\bb T$, $\theta\in I$.  Fix $A>0$, and
let $\mf e_A$ be a mean-$A$, exponential random variable independent
of the process $\widehat X_\epsilon$. Then,
\begin{equation*}
\bb P^\epsilon_\theta \big[ H_{I^c} \le A \big] \;\le\; e\;
\bb P^\epsilon_\theta \big[ H_{I^c} \le \mf e_A \big] \;. 
\end{equation*}
\end{asser}

\begin{proof}
By independence, if $\gamma=A^{-1}$,
\begin{equation*}
\bb P^\epsilon_\theta \big[ H_{I^c} \le \mf e_A \big] \;\ge\;
\int_A^\infty \bb P^\epsilon_\theta \big[ H_{I^c} \le t \big] 
\, \gamma \, e^{-\gamma t}\, dt \;\ge\;
\bb P^\epsilon_\theta \big[ H_{I^c} \le A \big]  \int_A^\infty 
\, \gamma \, e^{-\gamma t}\, dt \;,
\end{equation*}
as claimed.
\end{proof}

To estimate $\bb P^\epsilon_\theta [ H_{I^c} \le \mf e_A ]$ in terms
capacities, we interpret the exponential time $\mf e_A$ as the time
the process $\bs X^\gamma_\epsilon (t)$ starting from $(\theta, 1)$
jumps to $(\bb T, -1)$ provided $\gamma = A^{-1}$. Indeed, since the
second coordinate jumps at rate $\gamma$, independently from the first
one, for any open interval $I$ of $\bb T$ and any $\theta\in I$,
\begin{equation}
\label{61}
\bb P^\epsilon_\theta \big[ H_{I^c} \le \mf e_A \big] \;=\;
\bb P^{\gamma,\epsilon}_{(\theta, 1)} \big[ H_{(I^c,1)} \le H_{(\bb T,-1)} \big]
\;=\; h_I(\theta)\;.
\end{equation}

\begin{asser}
\label{l05}
Let $I$ be an open interval of $\bb T$, $\theta\in I$. Then,
\begin{equation*}
\gamma\, \int_I h_I(\theta)\, \mu_\epsilon(d\theta) \;=\;
2\, \Cap_{\gamma,\epsilon} [ (I^c,1), (\bb T,-1)]\;.
\end{equation*}
\end{asser}

\begin{proof}
The function $H(\theta,\sigma) = h_I(\theta) \mb 1\{\sigma =1\}$ is
harmonic on $(I,1)$, so that $L^\gamma_\epsilon H=0$ on this
set. Multiplying this identity by $1-h_I$, integrating over $(I,1)$
with respect to  $\mu^\gamma_\epsilon$, and integrating by parts
yields that 
\begin{align*}
& 0\;=\; \epsilon \, e^{H/\epsilon}\, \int_I (h_I')^2 m_\epsilon\, d\theta
\,-\, \epsilon \, e^{H/\epsilon} \, \int_I (1-h_I)\,   h_I'\,  
  m_\epsilon'\, d\theta \\
&\quad \,+\,  e^{H/\epsilon}\, \int_I (1-h_I)\ h_I'\, b\,  m_\epsilon\, d\theta
\,-\, \gamma\, \int_I (1-h_I)\, h_I\,  m_\epsilon\, d\theta\;.
\end{align*}
By \eqref{12},
$\epsilon\,  m_\epsilon' - b\, m_\epsilon$ is equal to a
constant, denoted below by $-\epsilon R_\epsilon$. Hence, if
$I=(u,v)$, the sum of the second and third terms of the
previous equation is equal to
\begin{equation*}
\epsilon R_\epsilon\, e^{H/\epsilon} \, \int_I (1-h_I)\,  h_I' \,
d\theta \;=\; -\frac 12 \, \epsilon R_\epsilon\, e^{H/\epsilon} \, \big\{
[1-h_I(v)]^2 - [1-h_I(u)]^2 \big\}\;=\;0\;,
\end{equation*}
because $h_I(u)=h_I(v)=1$. This proves the assertion in view of
formula \eqref{62} for the capacity.
\end{proof}

In the next assertion we take advantage of working in a
one-dimensional space. More precisely, although the next statement is
correct in higher dimension, it is empty since the first term on the
right-hand side is equal to $1$.

\begin{asser}
\label{l07}
Let $I$ be an open interval of $\bb T$.  For every $\theta$,
$\theta'\in I$, $A>0$,
\begin{equation*}
\bb P^\epsilon_\theta \big[ H_{I^c} \le A \big] \;\le\;
\bb P^\epsilon_\theta \big[ H_{I^c} < H_{\theta'} \big] \;+\; 
\bb P^\epsilon_{\theta'} \big[ H_{I^c} \le A \big] \;. 
\end{equation*}
\end{asser}

\begin{proof}
Intersect the set $\{H_{I^c} \le A\}$ with the event $\{H_{I^c} <
H_{\theta'}\}$ and its complement. The first set appears on the right
hand side. On the other hand, on $\{H_{\theta'} < H_{I^c} \}$,
$H_{I^c} = H_{I^c} \circ \vartheta (H_{\theta'}) + H_{\theta'}$, where
$\vartheta (t)$ represents the translation of a trajectory by $t$. In
particular, $H_{I^c} \le A$ implies that $H_{I^c} \circ \vartheta
(H_{\theta'}) \le A$. Hence, by the strong Markov property,
\begin{align*}
& \bb P^\epsilon_{\theta} \big[ H_{\theta'} < H_{I^c} \,,\, H_{I^c} \le A \big]  
\;\le\; \bb P^\epsilon_{\theta} \big[ H_{\theta'} < H_{I^c} \,,\, 
H_{I^c} \circ \vartheta (H_{\theta'})\le A \big]  \\
&\quad \;=\; \bb E^\epsilon_{\theta} \Big[ \mb 1\{H_{\theta'} < H_{I^c}\} \, 
\bb P^\epsilon_{\theta'} \big[ H_{I^c} \le A \big] \, \Big] \;\le\;
\bb P^\epsilon_{\theta'} \big[ H_{I^c} \le A \big] \;.
\end{align*}
This proves the claim.
\end{proof}

We are now in a position to prove Proposition \ref{p01}.

\begin{proof}[Proof of Proposition \ref{p01}.]
By Assertion \ref{l07},
\begin{align*}
& \bb P^\epsilon_\theta \big[ H_{I^c} \le A \big] \;\le\; 
\sup_{\mf m-\eta \le\theta'\le \mf m+\eta} \bb P^\epsilon_{\theta} \big[ H_{I^c} <
H_{\theta'} \big] \\
&\qquad \;+\; \frac 1{\mu_\epsilon(\mf m-\eta, \mf m+\eta)}
\int_{\mf m-\eta}^{\mf m+\eta}
\bb P^\epsilon_{\theta'} \big[ H_{I^c} \le A \big] 
\, m_\epsilon(\theta')\, d\theta'\;. 
\end{align*}
By Assertions \ref{l04} and \ref{l05} and \eqref{61}, the integral
appearing in the second term is less than or equal to
\begin{align*}
e\; \int_{\mf m-\eta}^{\mf m+\eta}
\bb P^\epsilon_{\theta'} \big[ H_{I^c} \le \mf e_A \big] 
\, m_\epsilon(\theta')\, d\theta'
\; & \le\; e\; \int_{I} \bb P^\epsilon_{\theta'} \big[ H_{I^c} \le \mf e_A \big] 
\, m_\epsilon(\theta')\, d\theta' \\
& \le\; \frac {2e}{\gamma} \,
\Cap_{\gamma,\epsilon} [ (I^c,1), (\bb T,-1)] \;. 
\end{align*}
which completes the proof of the proposition.
\end{proof}

We apply Proposition \ref{p01} to the case in which the interval $I$
is a valley $\ms W_j$, $1\le j\le \bs n$, introduced in Section
\ref{sec3}.1. Denote by $d(\theta, J)$ the distance from $\theta$ to a
subset $J$ of $\bb T$: $d(\theta, J) = \inf\{d(\theta, \theta') :
\theta'\in J\}$, where $d$ represents the distance in the
torus. Recall from Section \ref{sec3}.1 the definition of a well $\ms
E_j$.

\begin{corollary}
\label{c01}
Fix $1\le j\le \bs n$. Then, 
\begin{equation*}
\lim_{a \to 0} \limsup_{\epsilon \to 0} 
\sup_{\theta\in \ms E_j} \bb P^\epsilon_\theta 
\big[ H_{\ms W_j^c} \le a \big] \; =\;0\;.
\end{equation*}
\end{corollary}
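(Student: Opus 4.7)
The plan is to apply Proposition~\ref{p01} with $I=\ms W_j$, $A=a$, $\mf m = \mf m_{j,1}$ any global minimum of $V$ in $\ms W_j$, and $\eta>0$ chosen small enough that $(\mf m-\eta,\mf m+\eta)\subset\ms E_j$. With $\gamma=1/a$, this bounds $\bb P^\epsilon_\theta[H_{\ms W_j^c}\le a]$ uniformly in $\theta\in\ms E_j$ by a sum of two terms, and the strategy is to send $\epsilon\to 0$ first and $a\to 0$ second.

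For the first term, the one-dimensional hitting probability $\sup_{\theta'}\bb P^\epsilon_\theta[H_{\ms W_j^c}<H_{\theta'}]$, I would use the explicit scale-function formula \eqref{11} to express it as a ratio of integrals of $e^{S/\epsilon}$. Since $\theta\in\ms E_j$ satisfies $V(\theta)\le V(e_j^\pm)<H=V(\mf w_j^\pm)$, and since $S$ equals $V$ up to an additive constant on the landscape containing $\ms W_j$ (Remark~\ref{rm6}), Laplace expansion as in Assertions~\ref{as09a}--\ref{as09b} yields a bound of order $\epsilon^{-1/2}e^{-[H-V(e_j^\pm)]/\epsilon}$, which vanishes as $\epsilon\to 0$ uniformly in $\theta\in\ms E_j$ and $\theta'\in(\mf m-\eta,\mf m+\eta)$.

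For the second term, Propositions~\ref{l01}--\ref{l02} give $\mu_\epsilon(\mf m-\eta,\mf m+\eta)\to C(\eta)>0$, so it suffices to show that $\Cap_{\gamma,\epsilon}[(\ms W_j^c,1),(\bb T,-1)]$ stays bounded as $\epsilon\to 0$. I would invoke the Dirichlet principle via \eqref{62} with the test function $f(\theta,1)=h(\theta)$, $f(\theta,-1)=0$, where $h=h_{\ms W_j^c,\{\mf m\}}^\epsilon$ is the classical 1D equilibrium potential for $L_\epsilon$ which equals $1$ on $\ms W_j^c$ and vanishes at $\mf m$. The bound splits into a gradient part $\tfrac12 e^{H/\epsilon}\Cap_\epsilon(\ms W_j^c,\{\mf m\})$, of order $O(1)$ by the same Laplace-method capacity computation used throughout Section~\ref{sec1} (which yields $\Cap_\epsilon(\ms W_j^c,\{\mf m\})=O(e^{-H/\epsilon})$), and a mass part $\tfrac1{2a}\int_{\ms W_j} h^2\,d\mu_\epsilon$.

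The hard part will be the mass term, since the coefficient $\gamma=1/a$ is large and the naive bound $h\le 1$ is useless. The resolution is the exponential smallness of $h$ in the interior of the well: Laplace expansion of \eqref{11} gives $h(\theta)=O(\sqrt{\epsilon})\,e^{[V(\theta)-H]/\epsilon}$ on $\ms W_j$, which, combined with Proposition~\ref{p02} (providing $m_\epsilon(\theta)=O(\epsilon^{-1/2})\,e^{-V(\theta)/\epsilon}$) and the Laplace bound $\int_{\ms W_j}e^{V/\epsilon}\,d\theta=O(\sqrt{\epsilon}\,e^{H/\epsilon})$ at the boundary maximum, yields $\int_{\ms W_j}h^2\,d\mu_\epsilon=O(\epsilon\,e^{-H/\epsilon})$. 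Hence the mass part is $O(\epsilon\,e^{-H/\epsilon}/a)$, which vanishes as $\epsilon\to 0$ for every fixed $a>0$. Consequently $\Cap_{\gamma,\epsilon}=O(1)$ as $\epsilon\to 0$ and the second term is $O(a)$; combining with the first estimate gives $\limsup_{\epsilon\to 0}\bb P^\epsilon_\theta[H_{\ms W_j^c}\le a]\le Ca$ uniformly in $\theta\in\ms E_j$, which tends to $0$ as $a\to 0$.
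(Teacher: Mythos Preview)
Your proof is correct and follows essentially the same route as the paper's: apply Proposition~\ref{p01} with $I=\ms W_j$ and $\mf m=\mf m_{j,1}$, kill the first term via the explicit scale-function formula~\eqref{11}, and bound the enlarged capacity in the second term by the energy of the test function $h=h^\epsilon_{\ms W_j^c,\{\mf m\}}$, which is exactly the paper's $f_\epsilon$. The paper simply writes ``it is easy to check'' for this last step, whereas you carry out the gradient/mass split explicitly.

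Two small remarks. First, the pointwise bound $h(\theta)=O(\sqrt\epsilon)\,e^{[V(\theta)-H]/\epsilon}$ is not valid uniformly on $\ms W_j$ when the valley contains interior local maxima of $S$ between $\mf m$ and $\theta$ (the numerator $\int_{\mf m}^\theta e^{S/\epsilon}$ is then governed by that interior maximum, not by $S(\theta)$); the integral $\int_{\ms W_j} h^2\,d\mu_\epsilon$ still vanishes as $\epsilon\to0$, but not necessarily at the rate $\epsilon\,e^{-H/\epsilon}$ you state. Second, the Dirichlet principle you invoke (energy of a test function bounds the capacity from above) requires reversibility; the paper justifies this by observing that the diffusion stopped at $\partial\ms W_j$ is reversible with respect to $e^{-S/\epsilon}$, so all computations for~\eqref{64} may be carried out for that reversible process. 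You should make this point explicit.
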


\begin{proof}
Let $\mf m = \mf m_{j,1}$ and fix $\eta>0$ such that $(\mf m - \eta,
\mf m+\eta)\subset \ms E_j$.  We need to estimate the two terms which
appear on the right hand side of Proposition \ref{p01}. On the one
hand, it follows from the explicit formulae for the equilibrium
potential derived in Section \ref{sec1} that
\begin{equation*}
\limsup_{\epsilon \to 0}  \sup_{\theta\in\ms E_j} \,
\sup_{\mf m-\eta \le\theta'\le \mf m+\eta} \bb P^\epsilon_\theta 
\big[ H_{\ms W_j^c}  < H_{\theta'} \big] \; =\;0\;.
\end{equation*}

On the other hand, since $V(\mf m)=0$, there exists a constant
$c(\eta)>0$, independent of $\epsilon$, such that
$\mu_\epsilon (\mf m - \eta, \mf m + \eta) \ge c(\eta)$.
It remains, therefore, to show that
\begin{equation}
\label{64}
\lim_{a \to 0} \limsup_{\epsilon \to 0} \,
a \; \Cap_{\gamma,\epsilon} [ (\ms W_j^c,1), (\bb T,-1)] \;,
\end{equation}
where $\gamma = a^{-1}$.

Since the process is interrupted as it reaches the boundary of the
valley $\ms W_j$, it evolves as a reversible process, and all
computations can be performed with respect to this later one.

On the set of functions $f: I \to \bb R$ which are equal to $1$ at the
boundary of $I$, the energy which appears on the right hand side of
\eqref{62} is minimized by the equilibrium potential $h_I$ introduced
in \eqref{63}. Hence, in order to prove \eqref{64}, it is enough to
exhibit a function $f_\epsilon: \ms W_j \to \bb R$ which is equal to
$1$ at the boundary of $\ms W_j$ and such that
\begin{equation*}
\lim_{a \to 0} \limsup_{\epsilon \to 0} \Big\{ a \, e^{H/\epsilon}\,  
\epsilon \, \int_{\ms W_j} (\partial_\theta f_\epsilon (\theta))^2 \, m_\epsilon(\theta) 
\, d\theta \;+\; \int_{\ms W_j} f_\epsilon (\theta)^2 \, m_\epsilon(\theta) 
\, d\theta \Big\} \;=\;0\;.
\end{equation*}

Let $f_\epsilon: \ms W_j \to \bb R$ be the continuous function defined
by
\begin{equation*}
f_\epsilon (\theta) \;=\; \frac{\int_{\mf m}^\theta e^{S(y)/\epsilon}
  \, dy }{\int_{\mf m}^{\mf w^+_j} e^{S(y)/\epsilon} \, dy} \,
\chi_{[\mf m , \mf w^+_j]} (\theta) \;+\; \frac{\int_\theta^{\mf m} e^{S(y)/\epsilon}
  \, dy }{\int_{\mf w^-_j}^{\mf m} e^{S(y)/\epsilon} \, dy} \,
\chi_{[\mf w^-_j, \mf m ]} (\theta) \;.
\end{equation*}
Note that this function is not differentiable at $\mf m = \mf m_{j,1}$.
It is easy to check that this test function fulfills the condition
introduced in the penultimate displayed equation, which completes the
proof of the corollary.
\end{proof}

We turn to an estimate for the time spent outside the wells. Recall
from Section \ref{sec3}.4 the definition of the speeded-up processes
$\widehat X_\epsilon(\cdot)$.

\begin{lemma}
\label{l06}
For every $1\le j\le \bs n$, $t>0$, 
\begin{equation*}
\lim_{\epsilon \to 0}
\sup_{\theta\in \ms E_j} 
\bb E^\epsilon_{\theta} \Big[ \int_{0}^{t} 
\chi_{\Delta}(\widehat X_\epsilon(s))\, ds
\Big] \;=\; 0 \;. 
\end{equation*}
\end{lemma}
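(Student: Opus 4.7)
The plan is to combine Dynkin's formula with a test function obtained by solving a Poisson equation. Let $u_\epsilon : \bb T \to \bb R$ be a solution (unique up to an additive constant) of
\begin{equation*}
\widehat L_\epsilon u_\epsilon \;=\; \mu_\epsilon(\Delta) \,-\, \chi_\Delta,
\end{equation*}
which exists since the right-hand side has zero mean against $\mu_\epsilon$. Dynkin's formula applied between $0$ and $t$ yields
\begin{equation*}
\bb E^\epsilon_\theta\Big[\int_0^t \chi_\Delta(\widehat X_\epsilon(s))\,ds\Big] \;=\; t\, \mu_\epsilon(\Delta) \;+\; u_\epsilon(\theta) \;-\; \bb E^\epsilon_\theta\big[u_\epsilon(\widehat X_\epsilon(t))\big],
\end{equation*}
so it suffices to show $\mu_\epsilon(\Delta)\to 0$ and that the oscillation of $u_\epsilon$ on $\bb T$ tends to zero.

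The first claim is immediate from Propositions \ref{l01}--\ref{l02}: the density $m_\epsilon$ is bounded by a constant times $\epsilon^{-1/2} e^{-V/\epsilon}$, and since the wells $\ms E_j$ contain neighborhoods of every global minimum of $V$ by construction, $V$ is bounded below by some $c>0$ on the compact set $\Delta$, giving $\mu_\epsilon(\Delta)=O(e^{-c/(2\epsilon)})$. For the second claim, I would first apply Proposition \ref{as14} from Section \ref{sec5} to the Poisson equation above: its right-hand side is constant on each $\ms E_j$ (with value $\mu_\epsilon(\Delta)\to 0$), so $u_\epsilon$ is uniformly bounded on $\bb T$ and asymptotically constant on each $\ms E_j$, with limit $c_j$. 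The constants $(c_j)_{1\le j\le \bs n}$ satisfy the reduced equation $\bs L U=0$ (since the effective source on the wells vanishes in the limit), and by irreducibility of $\bs L$ they all coincide; call the common value $c$. To extend the estimate to $\Delta$, use the Feynman--Kac representation: for $\theta\in\Delta$,
\begin{equation*}
u_\epsilon(\theta) \;=\; \bb E^\epsilon_\theta\big[u_\epsilon(\widehat X_\epsilon(H_{\ms E}))\big] \;+\; (1-\mu_\epsilon(\Delta))\, \bb E^\epsilon_\theta[H_{\ms E}],
\end{equation*}
whose first term tends to $c$ uniformly (since $\widehat X_\epsilon(H_{\ms E})\in\ms E$), and whose second term is bounded by $\tau_\epsilon := \sup_{\theta'\in\Delta}\bb E^\epsilon_{\theta'}[H_{\ms E}]$.

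The main obstacle is therefore to prove $\tau_\epsilon\to 0$ on the sped-up scale. Every point of $\Delta$ either lies in a region where the drift $b$ carries the process toward $\ms E$ in time $O(1)$ on the original scale, or inside a shallower sub-well (a valley not among the deepest set $\{\ms W_j\}$) of depth $h<H$, from which the expected escape time is $O(e^{h/\epsilon})$ by Arrhenius--Eyring--Kramers estimates; in either case the hitting time of $\ms E$ is $o(e^{H/\epsilon})$ on the original scale and hence $o(1)$ after rescaling by $e^{-H/\epsilon}$. A quantitative argument may be assembled from the capacity estimates of Propositions \ref{p04}--\ref{p03} via the classical Dirichlet-principle formula expressing expected hitting times in terms of capacities and equilibrium measure.
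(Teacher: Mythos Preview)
Your overall strategy---solve a Poisson equation and apply Dynkin---is natural, but the appeal to Proposition~\ref{as14} is a genuine gap. That proposition is stated and proved only for sources of the form $g=\sum_i(\bs L\bs F)(i)\,\chi_{\ms E_i}$, which vanish identically on $\Delta$; the explicit computation in its proof (formula~\eqref{74} and the Laplace analysis that follows) uses that the integration variable $z$ ranges over the wells, near the global minima of $V$. Your source $\mu_\epsilon(\Delta)-\chi_\Delta$ equals approximately $-1$ on $\Delta$, so Proposition~\ref{as14} says nothing about the corresponding $u_\epsilon$. Moreover, the part of your source supported on $\Delta$ is exactly the piece carrying the information you want, so there is no splitting that off-loads the work onto Proposition~\ref{as14}; you would have to redo the analysis of Section~\ref{sec5} for a source supported on $\Delta$, which is a different (and not obviously easier) calculation.

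In fact the real content of your argument lies entirely in the claim $\tau_\epsilon=\sup_{\theta\in\Delta}\bb E^\epsilon_\theta[H_{\ms E}]\to 0$, which you only sketch. The paper avoids both this and the Poisson machinery: using one-dimensionality, it shows via the explicit equilibrium-potential formula that, starting from $\ms E_j$, the process hits any fixed point $\theta'$ near $\mf m_{j,1}$ before $\Delta$ with probability $1-o(1)$; the strong Markov property then transfers the starting point to $\theta'$, and averaging $\theta'$ over a neighbourhood of $\mf m_{j,1}$ bounds the remaining expectation by $t\,\mu_\epsilon(\Delta)/\mu_\epsilon(\mf m_{j,1}-\eta',\mf m_{j,1}+\eta')$ via stationarity. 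This is elementary and, in particular, never requires uniform control of $\bb E^\epsilon_\theta[H_{\ms E}]$ over all of $\Delta$ (which would demand separate arguments at saddle points and inside the shallower sub-wells).
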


\begin{proof}
Fix $\eta>0$ and let $\ms E_j^{(\eta)} = \{\theta\in\ms E_j :
d(\theta, \Delta)>\eta\}$. The time integral appearing in the
statement of the lemma is bounded above by
\begin{equation*}
H_{\ms E_j^{(\eta)}} \;+\; \int_{H_{\ms E_j^{(\eta)}}}^{t+H_{\ms
    E_j^{(\eta)}}} \chi_{\Delta}(\widehat X_\epsilon(s))\, ds\;.
\end{equation*}
As observed in Section \ref{sec1}, in one-dimension diffusions visit
points and one can compute the capacity between singletons and sets.
It follows from the proof of Proposition 3.3 in \cite{lms} that
\begin{equation*}
\bb E^\epsilon_{\theta} \big[ \, H_{\ms E_j^{(\eta)}} (\widehat X_\epsilon(\cdot))\,\big] 
\;=\; \frac {e^{-H/\epsilon}}{\Cap_\epsilon (\{\theta \} , \ms
  E_j^{(\eta)})} \int_{\bb T} h^*_{\{\theta \} , \ms
  E_j^{(\eta)}} (\theta') \, m_\epsilon (\theta') \, d\theta' \;,
\end{equation*}
where the factor $e^{-H/\epsilon}$ appeared because the process
$\widehat X_\epsilon(s)$ has been speeded-up by $e^{H/\epsilon}$. In
this formula, $h^*_{\{\theta \} , \ms E_j^{(\eta)}}$ represents the
equilibrium potential between $\{\theta \}$ and $\ms E_j^{(\eta)}$ for
the adjoint process. Therefore,
\begin{equation*}
\bb E^\epsilon_{\theta} \big[ \, H_{\ms E_j^{(\eta)}} (\widehat X_\epsilon(\cdot))\,\big] 
\;\le\; \frac {e^{-H/\epsilon}}{\Cap_\epsilon (\{\theta \} , \ms E_j^{(\eta)})}\;.
\end{equation*}
As in Section \ref{sec1}, it is possible to derive an explicit formula
for this capacity and to show that this expression vanishes as
$\epsilon\to 0$, uniformly in $\theta\in \ms E_j$.

By the strong Markov property, it remains to show that for every
$1\le j\le \bs n$, $t>0$, $\eta>0$,
\begin{equation*}
\lim_{\epsilon \to 0}
\sup_{\theta\in \ms E_j^{(\eta)}} 
\bb E^\epsilon_{\theta} \Big[ \int_{0}^{t} 
\chi_{\Delta}( \widehat X_\epsilon(s))\, ds
\Big] \;=\; 0 \;.
\end{equation*}

Let $\eta'>0$ be such that
$\mf m_{j,1}\in \ms E^{(2\eta')}_j$.  It follows
from the explicit formulae for the equilibrium potentials computed in
Section \ref{sec1} that
\begin{equation*}
\lim_{\epsilon \to 0} \sup_{\theta\in \ms E^{(\eta)}_j} \,
\sup_{\theta'\in \ms E^{(\eta')}_j} \, 
\bb P^\epsilon_{\theta} \big[ H_{\Delta} < H_{\theta'} \big] \;=\; 0 \;. 
\end{equation*}
Since the time integral appearing in the statement of the lemma is
bounded, it follows from the previous estimate that we may insert
inside the expectation the indicator of the set
$\{H_{\theta'} < H_{\Delta}\}$. Since we may start the time
integral from $H_{\Delta}$, on the set
$\{H_{\theta'} < H_{\Delta} \}$
\begin{equation*}
\int_{0}^{t}  \chi_{\Delta}( \widehat X_\epsilon(s))\, ds \;=\;
\int_{H_{\theta'}}^{t}  \chi_{\Delta}(\widehat X_\epsilon(s))\, ds \;\le\;
\int_{0}^{t}  \chi_{\Delta}( \widehat X_\epsilon(s))\, ds \,\circ\,
\vartheta(H_{\theta'})\;. 
\end{equation*}
Hence, by the strong Markov property,
\begin{equation*}
\bb E^\epsilon_{\theta} \Big[ \mb 1\{H_{\theta'} < H_{\Delta} \} \,
\int_{0}^{t}  \chi_{\Delta}(\widehat X_\epsilon(s))\, ds \Big] \;\le\;
\bb E^\epsilon_{\theta'} \Big[ 
\int_{0}^{t}  \chi_{\Delta}(\widehat X_\epsilon(s))\, ds \Big]\;.
\end{equation*}
Note that the starting point changed from $\theta$ to $\theta'$. 

In view of the previous bounds, to prove the lemma it is enough to
show that
\begin{equation*}
\lim_{\epsilon \to 0} \frac 1{\mu_\epsilon ([\mf m_{j,1}-\eta', \mf m_{j,1}+\eta'])}\,
\int_{\mf m_{j,1}-\eta'}^{\mf m_{j,1}+\eta'} 
\bb E^\epsilon_{\theta'} \Big[ 
\int_{0}^{t}  \chi_{\Delta}(\widehat X_\epsilon(s))\, ds \Big] 
\, \mu_\epsilon (d\theta') \;=\; 0 \;. 
\end{equation*}
Since $V(\mf m_{j,1})=0$, there exists a constant $c(\eta')>0$, independent
of $\epsilon$, such that
$\mu_\epsilon ([\mf m_{j,1} - \eta', \mf m_{j,1} + \eta']) \ge
c(\eta')$. On the other hand, the integral is bounded by
\begin{equation*}
\int_{\bb T}  \bb E^\epsilon_{\theta'} \Big[ 
\int_{0}^{t}  \chi_{\Delta}(\widehat X_\epsilon(s))\, ds \Big] 
\, \mu_\epsilon (d\theta') \;=\; t\, \mu_\epsilon (\Delta)\;,
\end{equation*}
which vanishes as $\epsilon\to 0$.
\end{proof}

\section{The Poisson equation}
\label{sec5}

We examine in this section properties of the solution of the equation
$\widehat L_\epsilon f = g$ for a function $g:\bb T\to \bb R$ which
has mean zero with respect to $\mu_\epsilon$.  We assume in this
section the conditions ({\bf H4}) of Subsection \ref{sec0}.4. Recall
the notation introduced in Section \ref{sec3}, and that we denote by
$\mf w^\pm_i$ the endpoints of the well $\ms W_i$. Throughout this
section, we assume, without loss of generality, that $\ms W_1$ is the
left-most valley of a landscape: $\ms W_1 = \ms W_{1,1}$ in the
notation introduced in the paragraph below \eqref{59}. Therefore,
there exists $\eta>0$ such that
\begin{equation}
\label{72}
S(x) \;\ge\; S(\mf m_{1,1}) \;+\; \eta \quad\text{for all}\quad
-\infty < x\le \mf w^-_1\;.
\end{equation}

Fix a function $\bs F \colon S \to \bb R$, and let $\bs G = \bs L \bs
F$.  Denote by $g\colon \bb T\to\bb R$ the function given by
\begin{equation*}
g \;=\; \sum_{1\le i \le \bs n} \bs G(i) \, \chi_{\ms E_i}\;.  
\end{equation*}

\begin{asser}
\label{as17}
 We have that
$\lim_{\epsilon \to 0} E_{\mu_{\epsilon}}[g] = 0$.
\end{asser}

\begin{proof}
By definition of the function $g$,
\begin{equation*}
E_{\mu_{\epsilon}}[g] \;=\; \sum_{i=1}^{\bs n} \bs G(i) \,
\mu_{\epsilon}(\ms E_i)\;. 
\end{equation*}
Fix $1\le i\le \bs n$.  By definition of $\mu_{\epsilon}$, by
Proposition \ref{p02} and since, by Remark \ref{rm5}, $G_1$ is constant on
valleys,
\begin{equation*}
\mu_{\epsilon}(\ms E_i) \;=\; [1+o(1)] \, \frac 1{Z} \; \frac 1{\sqrt{\epsilon}}
\; G_1(\mf m_{i,1}) \, \int \chi_{\ms E_i} (x)\, e^{ -V(x)/
  \epsilon}\, dx\;.
\end{equation*}
By Remark \ref{rm6}, $V$ and $S$ differ by an additive constant on
valleys. Hence, since $V(\mf m_{i,k}) = 0$, the previous expression is
equal to
\begin{equation*}
[1+o(1)] \, \frac 1{Z} \; \; G_1(\mf m_{i,1}) \, 
\sum_{k=1}^{\kappa(i)} \sigma (\mf m_{i,k})\; = \; 
[1+o(1)] \, \bs \mu (i)\;,
\end{equation*}
where the last identity follows from the definition of $\bs \mu$ given
in \eqref{57}. 

In conclusion,
\begin{equation*}
E_{\mu_{\epsilon}}[g] \;=\; [1+o(1)] \,
\sum_{i=1}^{\bs n} \bs G(i) \, \bs \mu (i)
\;=\; [1+o(1)] \,
\sum_{i=1}^{\bs n} (\bs L \bs F)(i) \, \bs \mu (i)\;. 
\end{equation*}
To complete the proof, it remains to recall the statement of Lemma \ref{ls01}
\end{proof}

Let  $\overline{g}_\epsilon \colon \bb T\to\bb R$ be given by 
\begin{equation*}
\overline{g}_\epsilon \;=\; g \;-\;  r(\epsilon) \, \chi_{\ms E_1} \;,
\end{equation*}
where $r(\epsilon) = E_{\mu_\epsilon}[g]/\mu_\epsilon(\ms E_1)$ and
$E_{\mu_\epsilon}[g]$ represents the expectation of $g$ with respect
to $\mu_\epsilon$. Clearly,
$E_{\mu_\epsilon}[\overline{g}_\epsilon]=0$, and, by Assertion
\ref{as17} and \eqref{57}, $\lim_{\epsilon} r(\epsilon)=0$. The
following proposition is the main result of this section.

\begin{proposition}
\label{as14}
Let $f_\epsilon: [\mf w^-_1,1+\mf w^-_1]\to\bb R$ be
the function given by
\begin{equation}
\label{73}
\begin{aligned}
f_\epsilon (x) \; & =\; \bs F(1) \;+\; a(\epsilon) 
\int_{\mf w^-_1}^x e^{S(y)/\epsilon}\, dy \\
\; &+\; \frac 1{\epsilon}\, e^{-H/\epsilon}\, 
\int_{\mf w^-_1}^x e^{S(y)/\epsilon}\, \int_{\mf w^-_1}^y 
\overline{g}_\epsilon (z) \, e^{-S(z)/\epsilon}\, dz\, dy\;,
\end{aligned}
\end{equation}
where
\begin{equation*}
a(\epsilon) \;=\; \frac 1{e^{B/\epsilon} -1} \, \frac 1{\epsilon}\,
e^{-H/\epsilon}\, \int_{\mf w^-_1}^{1+\mf w^-_1} 
\overline{g}_\epsilon (y) \, e^{-S(y)/\epsilon}\, dy\;.
\end{equation*}
Then, $f_\epsilon$ is $1$-periodic, and solves the elliptic problem
$\widehat L_\epsilon f_\epsilon = \overline{g}_\epsilon$ in $\bb T$. Moreover, there
exists a finite constant $C_0$ such that
\begin{equation*}
\sup_{0<\epsilon<1}\, \sup_{\theta\in\bb T} |f_\epsilon(\theta)| \;\le\; C_0\;,
\quad\text{and} \quad
\lim_{\epsilon\to 0} 
\sup_{\theta\in \ms E} \big|\, f_\epsilon(\theta) 
- f(\theta)\, \big| \;=\; 0\;,
\end{equation*}
where $f: \bb T \to \bb R$ is given by $f = \sum_{1\le i \le \bs n} \bs
F(i) \, \chi_{\ms E_i}$.
\end{proposition}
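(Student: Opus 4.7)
The proof divides into three parts: verifying the ODE and the $1$-periodicity, establishing the uniform bound on $\bb T$, and identifying the limit on $\ms E$. I sketch each in turn.

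For the ODE and periodicity, a direct differentiation of \eqref{73} yields
\[
f_\epsilon'(x) \;=\; e^{S(x)/\epsilon}\Bigl\{\, a(\epsilon) \;+\; \frac{e^{-H/\epsilon}}{\epsilon}\,\Psi_\epsilon(x)\,\Bigr\}, \qquad \Psi_\epsilon(x):=\int_{\mf w^-_1}^x \overline g_\epsilon(z)\,e^{-S(z)/\epsilon}\,dz\,,
\]
and a second differentiation combined with $b=-S'$ gives $\epsilon f_\epsilon''+b f_\epsilon' = e^{-H/\epsilon}\overline g_\epsilon$, i.e.\ $\widehat L_\epsilon f_\epsilon=\overline g_\epsilon$. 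Using $S(x+1)=S(x)-B$, the derivative matching $f_\epsilon'(1+\mf w^-_1)=f_\epsilon'(\mf w^-_1)$ is exactly the prescribed formula for $a(\epsilon)$. For the value matching $f_\epsilon(1+\mf w^-_1)=f_\epsilon(\mf w^-_1)$, write $I_1 := \int_{\mf w^-_1}^{1+\mf w^-_1}e^{S/\epsilon}dy$, $A_\epsilon(z):=\int_{\mf w^-_1}^z e^{S/\epsilon}dy$, and $K:=\int \overline g_\epsilon e^{-S/\epsilon}A_\epsilon\,dz$. Splitting $\int_z^{z+1}e^{S/\epsilon}dy$ at $1+\mf w^-_1$ and using $S(y+1)=S(y)-B$ yields $\pi_\epsilon(z)\,e^{S(z)/\epsilon} = I_1-(1-e^{-B/\epsilon})A_\epsilon(z)$, so the mean-zero condition $\int \overline g_\epsilon \pi_\epsilon\,dz=0$ becomes $I_1\Psi_\epsilon(1+\mf w^-_1) = (1-e^{-B/\epsilon})K$. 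Applying Fubini to the double integral $\Phi_\epsilon(x) := \int_{\mf w^-_1}^x e^{S(y)/\epsilon}\Psi_\epsilon(y)\,dy$ in \eqref{73} gives $\Phi_\epsilon(1+\mf w^-_1)=I_1\Psi_\epsilon(1+\mf w^-_1)-K$; substituting the formula for $a(\epsilon)$ then closes the identity.

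For the asymptotic analysis on $\ms E$, Fubini rewrites
\[
f_\epsilon(x)-\bs F(1) \;=\; a(\epsilon)\,A_\epsilon(x) \;+\; \frac{e^{-H/\epsilon}}{\epsilon}\int_{\mf w^-_1}^x \overline g_\epsilon(z)\,e^{-S(z)/\epsilon}\bigl[A_\epsilon(x)-A_\epsilon(z)\bigr]\,dz\;.
\]
Because $\overline g_\epsilon$ is supported on $\ms E$, this reduces to a sum over wells $\ms E_j\subset [\mf w^-_1,x]$. In each well $\ms E_j\subset \Lambda_a$, Assertion \ref{as13} together with $V(\mf m)=0$ gives $S(\mf m_{j,r})=S(\mf L_{a+1})-H$, so the Laplace estimates of Assertions \ref{as09a}--\ref{as10b} yield
\[
\int_{\ms E_j}\overline g_\epsilon(z)\,e^{-S(z)/\epsilon}\,dz \;=\; [1+o(1)]\,\sqrt{\epsilon}\,\bs G(j)\,\bs\pi(j)\,e^{H/\epsilon}\,e^{-S(\mf L_{a+1})/\epsilon}\,.
\]
Similarly, for any subinterval $[u,v]\subset[\mf w^-_1,1+\mf w^-_1]\setminus \ms E$, one has $\int_u^v e^{S/\epsilon}dy = [1+o(1)]\sqrt{\epsilon}\sum_{\mf M}\omega(\mf M)\,e^{S(\mf M)/\epsilon}$, the sum running over the local maxima of $S$ in $[u,v]$, all of the form $\mf M_k^+\in \Lambda_b$ at height $S(\mf L_{b+1})$.

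Inserting these into the Fubini formula and using the strict ordering $S(\mf L_2)>S(\mf L_3)>\cdots$, one checks that for $x\in\ms E_{a,\ell}$ the only terms of order $1$ arise when the well $(b,k)$ and the maxima contributing to $A_\epsilon(x)-A_\epsilon(\mf m_{b,k,r})$ share the same height $S(\mf L_{b+1})$; every other combination produces an exponentially small contribution. The surviving $\omega(\mf M)$-sums reconstruct $G_1(\mf m_{b,k,1})$ when $b<a$ and $G_1(\mf m_{a,k,1})-G_1(\mf m_{a,\ell,1})$ when $b=a$ with $k<\ell$, thanks to the piecewise definition \eqref{27b}. An explicit estimate also shows $a(\epsilon)A_\epsilon(x)=o(1)$ and that the correction $r(\epsilon)\chi_{\ms E_1}$ is negligible. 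Summing over $b,k$ produces
\[
\sum_{b=1}^{a-1}\sum_{k=1}^{n_b}\bs G(b,k)\,\bs\pi(b,k)\,G_1(\mf m_{b,k,1}) + \sum_{k=1}^{\ell-1}\bs G(a,k)\,\bs\pi(a,k)\bigl[G_1(\mf m_{a,k,1})-G_1(\mf m_{a,\ell,1})\bigr],
\]
which equals $\bs F(a,\ell)-\bs F(1,1)$ by Lemma \ref{la02}, giving the claimed uniform convergence on $\ms E$. The uniform bound on $\bb T$ follows by running the same Laplace analysis at arbitrary $x$: in each region (well, saddle interval, intra-landscape interval) the leading exponential factors of $a(\epsilon)A_\epsilon(x)$ and $e^{-H/\epsilon}\Phi_\epsilon(x)/\epsilon$ have matching magnitudes and cancel, the same mechanism that underlies periodicity.

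The main technical obstacle is the bookkeeping in the third paragraph: one must carefully match which local maxima lie in $[z,x]$ as $z$ ranges over each well $(b,k)$ and verify that the surviving sums of $\omega(\mf M)$ reproduce the piecewise definition of $G_1$ under hypothesis ({\bf H4}). Special care is required at landscape boundaries $\mf L_{a+1}$, which are shared between the last well of $\Lambda_a$ and the first well of $\Lambda_{a+1}$, and in handling the residual $a(\epsilon)A_\epsilon(x)$, whose exponential factors are tuned precisely by the periodicity constraint but must still be shown to be $o(1)$ uniformly in $x\in\ms E$.
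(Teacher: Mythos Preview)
Your argument for periodicity, the ODE, and the convergence on $\ms E$ is essentially the paper's: the same Fubini rewriting, the same Laplace localisation at the minima $\mf m_{b,k,r}$ and the maxima $\mf M_{j,k}$, and the same appeal to Lemma~\ref{la02} to identify the sum with $\bs F(a,\ell)-\bs F(1,1)$. The periodicity computation you outline is exactly Assertion~\ref{as16} (your identity $\pi_\epsilon(z)e^{S(z)/\epsilon}=I_1-(1-e^{-B/\epsilon})A_\epsilon(z)$ is the key step there).

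The one place where your sketch diverges from the paper, and where it is not correct as written, is the uniform bound on $\bb T$. You claim that ``the leading exponential factors of $a(\epsilon)A_\epsilon(x)$ and $e^{-H/\epsilon}\Phi_\epsilon(x)/\epsilon$ have matching magnitudes and cancel''. No such cancellation is needed or occurs: the paper shows that each term is separately controlled. For the second term, \eqref{is01}--\eqref{is02} together with \eqref{72} give $a(\epsilon)A_\epsilon(x)\to 0$ uniformly on $[\mf w^-_1,1+\mf w^-_1]$. For the third term, the paper uses a one-line trick you may have missed: since $z\ge \mf w^-_1$ and $x\le 1+\mf w^-_1\le 1+z$, one has $\int_z^x e^{S(y)/\epsilon}\,dy\le \int_z^{z+1}e^{S(y)/\epsilon}\,dy=e^{S(z)/\epsilon}\pi_\epsilon(z)$, whence the Fubini form \eqref{74} is bounded in absolute value by $C_0\,\epsilon^{-1}e^{-H/\epsilon}\int_{\mf w^-_1}^{1+\mf w^-_1}\pi_\epsilon(z)\,dz=C_0\,\epsilon^{-1}e^{-H/\epsilon}c(\epsilon)$, which is $O(1)$ by \eqref{75}. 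This avoids entirely the region-by-region Laplace analysis you propose, and in particular sidesteps the delicate bookkeeping at saddle intervals and landscape boundaries that your last paragraph flags.
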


The proof of this proposition is divided in several steps. In the next
lemma, we show that the function $f_\epsilon$ is $1$-periodic and
solves the Poisson equation.

\begin{lemma}
\label{l10}
Let $g: \bb T \to \bb R$ be a bounded function which has mean zero
with respect to $\mu_\epsilon$, and let $f_\epsilon : [0,1] \to \bb R$
be given by
\begin{equation*}
f_\epsilon (x) \;=\; A \;+\; a(\epsilon) \int_0^x e^{S(y)/\epsilon}\, dy
\;+\; \frac 1{\epsilon}\, \int_0^x e^{S(y)/\epsilon}\, \int_0^y g (z)
\, e^{-S(z)/\epsilon}\, dz\, dy\;,
\end{equation*}
where $A\in \bb R$ and
\begin{equation*}
a(\epsilon) \;=\; \frac 1{e^{B/\epsilon} -1} \frac 1{\epsilon}\,
\int_0^1 g (y) \, e^{-S(y)/\epsilon}\, dy\;.
\end{equation*}
Then, $f_\epsilon$ solves the elliptic problem $L_\epsilon f = g$ in
$\bb T$.
\end{lemma}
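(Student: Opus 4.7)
The plan is to rewrite the Poisson equation as a first-order ODE in $f'$, integrate twice, and then read off the two periodicity conditions that pin down the integration constants. The free constant $A$ plays no role, so periodicity will impose exactly two scalar conditions on $f_\epsilon$.

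First, since $S'=-b$, the equation $L_\epsilon f=g$ can be rewritten as $\epsilon f''-S'\,f' = g$, which multiplied by $e^{-S/\epsilon}$ becomes
\begin{equation*}
\epsilon\,\big(f'(y)\,e^{-S(y)/\epsilon}\big)' \;=\; g(y)\,e^{-S(y)/\epsilon}\;.
\end{equation*}
Integrating once yields $\epsilon\,f'(y)\,e^{-S(y)/\epsilon} = C + \int_0^y g(z)\,e^{-S(z)/\epsilon}\,dz$ for some constant $C$, and a second integration produces the formula for $f_\epsilon$ in the statement, with $a(\epsilon)=C/\epsilon$ and an arbitrary additive constant $A$. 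Direct differentiation then confirms $L_\epsilon f_\epsilon = g$; this is a routine check, so the real content is the periodicity of $f_\epsilon$ as a function on $\bb T$.

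Next, I would verify that the choice of $a(\epsilon)$ given in the statement is exactly what is needed to enforce $f_\epsilon'(0)=f_\epsilon'(1)$. Using $S(0)=0$ and $S(1)=-B$, the identity $f_\epsilon'(0)=f_\epsilon'(1)$ reads
\begin{equation*}
a(\epsilon) \;=\; e^{-B/\epsilon}\,a(\epsilon) \;+\; \frac{e^{-B/\epsilon}}{\epsilon}\int_0^1 g(z)\,e^{-S(z)/\epsilon}\,dz\;,
\end{equation*}
which is equivalent to the stated formula for $a(\epsilon)$. This step uses no information about $g$ beyond boundedness.

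The main obstacle, and the only place where the mean-zero hypothesis $E_{\mu_\epsilon}[g]=0$ enters, is the other periodicity condition $f_\epsilon(0)=f_\epsilon(1)$. Setting $H(y):=\int_0^y g(z)\,e^{-S(z)/\epsilon}\,dz$, I need to show
\begin{equation*}
a(\epsilon)\int_0^1 e^{S(y)/\epsilon}\,dy \;+\; \frac 1\epsilon \int_0^1 e^{S(y)/\epsilon}\,H(y)\,dy \;=\; 0\;.
\end{equation*}
To produce this, I would start from $\int_0^1 g(x)\,\pi_\epsilon(x)\,dx = c(\epsilon)\,E_{\mu_\epsilon}[g] = 0$ and split the inner integral as
\begin{equation*}
\int_x^{x+1} e^{S(y)/\epsilon}\,dy \;=\; \int_x^{1} e^{S(y)/\epsilon}\,dy \;+\; e^{-B/\epsilon}\int_0^{x} e^{S(y)/\epsilon}\,dy\;,
\end{equation*}
using the relation $S(y+1)=S(y)-B$. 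Applying Fubini to each of the two resulting double integrals (exchanging the $x$- and $y$-orders) converts the mean-zero identity into
\begin{equation*}
(1-e^{-B/\epsilon})\int_0^1 e^{S(y)/\epsilon}\,H(y)\,dy \;+\; e^{-B/\epsilon}\,H(1)\int_0^1 e^{S(y)/\epsilon}\,dy \;=\; 0\;.
\end{equation*}
Since $H(1)=\epsilon\,a(\epsilon)\,(e^{B/\epsilon}-1)$ by the formula for $a(\epsilon)$, dividing by $(1-e^{-B/\epsilon})$ gives precisely the identity $f_\epsilon(1)=f_\epsilon(0)$. Together with $f_\epsilon'(0)=f_\epsilon'(1)$, this shows that $f_\epsilon$ extends to a $1$-periodic $C^2$ function on $\bb R$, i.e.\ to a function on $\bb T$, which solves $L_\epsilon f_\epsilon = g$ on the torus.
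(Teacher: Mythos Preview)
Your proof is correct and follows essentially the same route as the paper: verify $L_\epsilon f_\epsilon=g$ and $f_\epsilon'(0)=f_\epsilon'(1)$ directly, then obtain $f_\epsilon(0)=f_\epsilon(1)$ by combining Fubini, the relation $S(y+1)=S(y)-B$, and the mean-zero hypothesis $\int g\,\pi_\epsilon=0$. Your organization is arguably cleaner---you start from $\int g\,\pi_\epsilon=0$ and derive the periodicity identity, whereas the paper starts from $f_\epsilon(1)-f_\epsilon(0)$ and manipulates until the $\pi_\epsilon$ integral appears---and you additionally supply the integrating-factor derivation that motivates the formula for $f_\epsilon$, which the paper omits.
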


\begin{proof}
We have to show that $(L_\epsilon f_\epsilon)(x) = g (x)$ for all
$x\in (0,1)$ and that $f'_\epsilon (1) = f'_\epsilon (0)$, $f_\epsilon
(1) = f_\epsilon (0)$. The first two properties are straightforward.
The third one is proved in Assertion \ref{as16} below.
\end{proof}

\begin{asser}
\label{as16}
We claim that $f_\epsilon (1) = f_\epsilon (0)$. 
\end{asser}

\begin{proof}
In view of its definition, $\epsilon  [f_\epsilon (1) - f_\epsilon
(0)]$ is equal to
\begin{equation*}
\epsilon \, a(\epsilon) \int_0^1 e^{S(y)/\epsilon}\, dy
\;+\; \int_0^1 e^{S(y)/\epsilon}\, \int_0^y g (z)\,
e^{-S(z)/\epsilon}\, dz\, dy\;.
\end{equation*}
Change the order of integration in the second term, and recall the
definition of $a(\epsilon)$. Since $B=S(0) - S(1)$, this expression is
equal to $[e^{B/\epsilon} -1]^{-1} e^{-S(1)/\epsilon}$ times
\begin{align*}
& e^{S(1)/\epsilon} \int_0^1 g (z) \, e^{-S(z)/\epsilon}\, dz 
\, \int_0^1 e^{S(y)/\epsilon}\, dy\\
&\quad + \; \big[\, e^{S(0)/\epsilon}- e^{S(1)/\epsilon}\, \big]
\, \int_0^1 g (z) 
\, e^{-S(z)/\epsilon}\,  \int_z^{1} e^{S(y)/\epsilon} \, dy\, dz\;.
\end{align*}
This difference is equal to
\begin{equation}
\label{69}
\begin{aligned}
& e^{S(1)/\epsilon} \int_0^1 g (z) \, e^{-S(z)/\epsilon}
\, \int_0^z e^{S(y)/\epsilon}\, dy \, dz \\
&\quad + \; e^{S(0)/\epsilon} \, \int_0^1 g (z) 
\, e^{-S(z)/\epsilon}\,  \int_z^1 e^{S(y)/\epsilon} \, dy\, dz\;.
\end{aligned}
\end{equation}
Rewrite the second integral as
\begin{equation*}
\int_0^1 g (z) \, e^{-S(z)/\epsilon}\,  \int_z^{1+z}
e^{S(y)/\epsilon} \, dy\, dz \;-\;
\int_0^1 g (z) \, e^{-S(z)/\epsilon}\,  \int_1^{1+z}
e^{S(y)/\epsilon} \, dy\, dz\;.
\end{equation*}
Note that in the first integral the density $\pi_\epsilon$
appears. Since $g$ has mean zero with respect to
$\mu_\epsilon$, the first term vanishes. In the second integral,
change variables $y=y'+1$ and recall that $S(y'+1) = S(y') -B$ to
obtain that the second integral of \eqref{69} is equal to
\begin{equation*}
-\; e^{[S(0) - B] /\epsilon} \int_0^1 g (z) \, e^{-S(z)/\epsilon}
\, \int_0^z e^{S(y)/\epsilon}\, dy \, dz\;.
\end{equation*}
Since $S(0) - B = S(1)$, the terms in \eqref{69} cancel, which
completes the proof of the assertion.
\end{proof}

\begin{proof}[Proof of Proposition \ref{as14}]
We proved in Lemma \ref{l10} that $f_\epsilon$ is $1$-periodic and
solves the elliptic equation $\widehat L f_\epsilon = 
\overline{g}_\epsilon$ in $\bb T$. It remains to show that $f_\epsilon$ is
uniformly bounded and converges uniformly to $f$ on the set $\ms E$.
We examine separately the second and third terms on the right-hand
side of \eqref{73}.

We claim that the second term vanishes as $\epsilon\to 0$, uniformly
in $x\in [\mf w^-_1, 1+\mf w^-_1]$. On the one hand, since $S(x) \le
S(\mf w^-_1)$ for all $x\ge \mf w^-_1$ [because $\mf w^-_1$ is the
left endpoint of a valley],
\begin{equation}
\label{is01}
\frac 1{\sqrt{\epsilon}}\, \int_{\mf w^-_1}^{1+\mf w^-_1} 
e^{[S(y)-S(\mf w^-_1)]/\epsilon}\, dy \;\le\; C_0
\end{equation}
for some finite constant $C_0$ independent of $\epsilon$. On the other
hand, since $|\overline{g}_\epsilon (x)|\le C_0$, $e^{B/\epsilon} -1
\ge (1-e^{-B}) e^{B/\epsilon}$ for sufficiently small$ \epsilon>0$,
and $S(1+\mf w^-_1) = S(\mf w^-_1) -B$,
\begin{equation}
\label{is02}
\begin{aligned}
& \frac 1{e^{B/\epsilon} -1} \, e^{[S(\mf w^-_1)-H]/\epsilon}\, 
\frac 1{\sqrt{\epsilon}}\, \Big|\, \int_{\mf w^-_1}^{1+\mf w^-_1} 
\overline{g}_\epsilon(y) \, e^{-S(y)/\epsilon}\, dy \,\Big| \; \\
&\qquad \le\; C_0
\, e^{[S(1+\mf w^-_1)-H]/\epsilon}\, 
\frac 1{\sqrt{\epsilon}}\, \int_{\mf w^-_1}^{1+\mf w^-_1} 
\, e^{-S(y)/\epsilon}\, dy\;. 
\end{aligned}
\end{equation}
Since $\mf w^-_1$ is the left endpoint of a valley whose depth is $H$,
$S(1+\mf w^-_1)-H = S(1+\mf m_{1,1})$. By \eqref{72}, there exists
$\eta>0$ such that $S(y) \ge S(1+\mf m_{1,1}) + \eta$ for all $\mf
w^-_1 \le y\le 1+\mf w^-_1$. Multiplying \eqref{is01} and \eqref{is02}
yields that the second term in the formula of $f_\epsilon$ vanishes
as $\epsilon\to 0$, uniformly for $x\in [\mf w^-_1 , 1+\mf w^-_1]$.

We turn to the third term on the right hand side of \eqref{73}.
Exchange the order of the integrals to write it as
\begin{equation}
\label{74}
\frac 1\epsilon \, e^{-H/\epsilon}
\int_{\mf w^-_1}^x   \overline{g}_\epsilon (z)  \int_{z}^x 
e^{[S(y)-S(z)]/\epsilon} \, dy\, dz \;.
\end{equation}
Since $\overline{g}_\epsilon$ is uniformly bounded, the absolute value
of this expression is bounded by
\begin{equation*}
\frac {C_0}{\epsilon} \, e^{-H/\epsilon}
\int_{\mf w^-_1}^{1+\mf w^-_1}  \int_{z}^{1+z} 
e^{[S(y)-S(z)]/\epsilon} \, dy\, dz \;= \frac {C_0}{\epsilon} \,
e^{-H/\epsilon} \, c(\epsilon)\;,
\end{equation*}
where $c(\epsilon)$ is the normalizing constant introduced below
\eqref{34}.  By \eqref{75}, this expression is uniformly bounded in
$\epsilon$. This proves one assertion of the proposition. It also
proves that we may replace $\overline{g}_\epsilon$ in \eqref{74} by $g$ because, by
Assertion \ref{as17}, $r(\epsilon)$ converges to $0$ as $\epsilon\to
0$.

It remains to prove the uniform convergence in $\ms E$ of the sequence
$f_\epsilon$ with $\overline{g}_\epsilon$ replaced by $g$.  As the
integral in \eqref{74} is carried over pairs $(y,z)$ such that
$z\le y$ the maximum value of the difference $S(y)-S(z)$ is $H$ and it
is attained only when $y$, $z$ belong to the same landscape and
$V(y)=H$, $V(z)=0$, that is, when $y$ is an endpoint of a valley, and
$z$ is a global minima of a valley in the same landscape. Hence, the
dominant terms of the integral are the ones in which $y$ belongs to a
neighborhood of an endpoint of a valley and $z$ to a neighborhood of a
global minima of a valley.

Recall from Subsection \ref{sec3}.2 that the valleys $\ms W_j$ which
belong to the same landscape are represented as $\ms W_{a,1}, \dots ,
\ms W_{a,n_a}$, $1\le a\le \bs p$. To prove uniform convergence in
$\ms E$, fix a point $x\in\ms E_j = \ms E_{a,\ell}$. By the
observation of the previous paragraph, the contribution to the
integral of the points $z$ which do not belong to a neighborhood of
point $\mf m_{b,k,i}$ [that is a global minimum of $V$ in the valley
$\ms W_{b,k}$] is negligible.

Fix $b<a$, $1\le k\le n_b$. The contribution to integral when $z$ belongs to the
neighborhoods of the local minima of $\ms W_{b,k}$ is given by $\bs\pi
(b,k)\, G_1(\mf m_{b,k,1})$, where $\bs \pi$ has been introduced in
\eqref{58}. For $b=a$, $1\le k\le \ell \le n_a$, the contribution to
integral of the neighborhoods of the local minima of $\ms W_{a,k}$ is
equal to $\bs\pi (a,k)\, [G_1(\mf m_{a,k,1}) - G_1(\mf
m_{a,\ell,1})]$. Hence, the integral \eqref{74} with $\overline{g}_\epsilon$
replaced by $g$ is equal to
\begin{align*}
& \sum_{b=1}^{a-1} \sum_{k=1}^{n_b} (\bs L \bs
F)(b,k)\, \bs\pi (b,k)\, G_1(\mf m_{b,k,1}) \\
&\quad \;+\; \sum_{k=1}^{\ell-1} (\bs L \bs F)(a,k)\, \bs\pi (a,k)\, 
\big[ G_1(\mf m_{a,k,1}) - G_1(\mf m_{a,\ell,1}) \big] \;+\; R(\epsilon)\;,
\end{align*}
where $R(\epsilon)$ is a remainder which converges to $0$ as
$\epsilon\to 0$, uniformly for $x\in [\mf w^-_1, 1+\mf w^-_1]$.  By
Lemma \ref{la02}, the previous sum is equal to $\bs F(a,\ell) -
\bs F(1)$, which proves that $f_\epsilon$ converges to $f$ uniformly
in $\ms E$.
\end{proof}

\end{document}